\newcommand{\NN}{\mathbb{N}}
\newcommand{\ZZ}{\mathbb{Z}}
\newcommand{\dis}{\displaystyle}
\renewcommand{\epsilon}{\varepsilon}
\newcommand{\Supp}{\text{\rm supp}}
\newcommand{\Card}{\text{\rm card}}
\theoremstyle{definition}
\newtheorem{theorem}{Theorem}[section]
\newtheorem{lemma}[theorem]{Lemma}
\newtheorem{proposition}[theorem]{Proposition}
\newtheorem{definition}[theorem]{Definition}
\newtheorem{remark}[theorem]{Remark}
\title{Zero-Temperature Chaos in Bidimensional Models with Finite-Range Potentials}
\author{Sebasti\'an Barbieri$^1$, Rodrigo Bissacot$^{2,3}$, Gregório Dalle Vedove$^{2,4}$, Philippe Thieullen$^4$}
\date{%
	\small{$^1$Departamento de Matem\'aticas y Ciencia de la Computaci\'on, Universidad de Santiago de Chile \\
	$^2$Institute of Mathematics and Statistics, University of São Paulo \\
	$^3$Faculty of Mathematics and Computer Science, Nicolaus Copernicus University\\
	$^4$Institut de Mathématiques de Bordeaux, Université de Bordeaux}\\
	[2ex] 
	\begin{footnotesize}
	emails: sebastian.barbieri@usach.cl, rodrigo.bissacot@gmail.com; gregldvn@gmail.com; philippe.thieullen@math.u-bordeaux.fr
	\end{footnotesize}\\
	\today
	\vskip.1cm}
\begin{document}

\maketitle

\begin{abstract}
We construct a finite-range potential on a bidimensional full shift on a finite alphabet that exhibits a zero-temperature chaotic behavior as introduced by van Enter and Ruszel. This is the phenomenon where there exists a sequence of temperatures that converges to zero for which the whole set of equilibrium measures at these given temperatures oscillates between two sets of ground states. Br\'emont's work shows that the phenomenon of non-convergence does not exist for finite-range potentials in dimension one for finite alphabets; Leplaideur obtained a different proof for the same fact. Chazottes and Hochman
provided the first example of non-convergence in higher dimensions $d\geq3$; we extend their result for $d=2$ and highlight the importance of two estimates of recursive nature that are crucial for this proof: the relative complexity and the reconstruction function of an extension.

We note that a different
proof of this result was found by Chazottes and Shinoda, at around the same time that this article was initially submitted and that a strong generalization has been found by Gayral, Sablik and Taati.
\end{abstract}

\section{Introduction}

The states of a system at equilibrium in statistical mechanics are usually described by a family of probability measures indexed by an inverse temperature $\beta$ called Gibbs states. There are several ways in the literature to formalize the notion of Gibbs states; the most common definition in probability and mathematical physics literature is considering DLR equations, originally proposed by R. Dobrushin~\cite{Dob68}, and by O. Lanford and D. Ruelle~\cite{LanRue69}, this is the standard definition on textbooks of these areas, see \cite{bovier_2006, friedli_velenik_2017, Georgii1988}.

We adopt a more ergodic approach focusing on \textit{equilibrium measures}, which for regular enough potentials correspond to the translation invariant DLR measures, see Ruelle \cite{Ruelle2004}, Muir \cite{Muir_2011} and Keller \cite{Keller1998}. A discussion about when the several notions of Gibbsianness are equivalent (or not) can be found in  \cite{Capocaccia1976,  vanEnterFernadezSokal1993, Keller1998, Kimura, Meyerovitch2013, Muir_2011}.

The existence of an equilibrium measure for a continuous potential on the full shift comes from compactness. In the one-dimensional setting, the equilibrium measure is unique, whereas uniqueness does not necessarily hold in the two-dimensional case. Our main goal is to understand the behavior of the whole set of equilibrium measures as the temperature goes to zero, showing the existence of potentials with a chaotic behavior in dimension 2. Given a potential $\varphi$, for each inverse temperature $\beta$ consider $\mu_\beta$ an equilibrium state associated with the potential $\beta \varphi$, the existence of weak$^*$-accumulation points for the family of invariant Borel probability measures $(\mu_\beta)_{\beta\geq0}$  as $\beta\to+\infty$ is a trivial consequence of the Banach-Alaoglu theorem. The measures obtained as accumulation points of such families are particular cases of {\it minimizing measures} (or ground states) that we briefly recall. A minimizing measure $\mu_{min}$ is an invariant probability measure that satisfies
\[
\int\! \varphi \,d \mu_{min} = \bar\varphi \ \ \text{where} \ \ \bar\varphi := \inf \Big\{ \int \! \varphi \, d \mu : \mu \ \text{translation invariant} \Big\}.
\]
The real number $\bar \varphi$ is called the {\it ergodic minimizing value} or {ground-state energy}  of the potential $\varphi$.

The union of the support of minimizing measures is a compact invariant set, called the {\it Mather set}, that prescribes the behavior of the equilibrium measures at zero temperature. Many of the ideas in ergodic optimization and the terminology as ergodic minimizing value, minimizing measures and Mather set comes from the theory of Lagrangian dynamics in the continuous setting, see Mather~\cite{Mather1982}, Ma\~n\'e \cite{Mane1996}, Fathi \cite{Fathi1997,Fathi1997bis,Fathi2020}, and from Aubry-Mather theory in the discrete setting, Forni, Mather \cite{ForniMather1994}, Garibaldi, Lopes \cite{GaribaldiLopes2008}, Garibaldi, Thieullen \cite{GaribaldiThieullen2011}, Su, de la Llave \cite{SuDeLaLlave2012}, Sorrentino \cite{Sorrentino2016}. A thorough review of ergodic optimization is done in Jenkinson \cite{Jenkinson2018} in the one-dimensional setting.

For generic norm summable interactions on a finite alphabet, there exists a unique minimizing measure $\mu_{min}$ of uniquely ergodic support  (see Ruelle~\cite{Ruelle1969}). Some further properties of ground states for generic interactions are discussed by van Enter and Miekisz, see~\cite{VanEnterMiekisz2000}. Therefore in the case above, for any family of equilibrium measures $(\mu_\beta)_{\beta\geq0}$ we have $\mu_\beta \to \mu_{min}$ as $\beta\to+\infty$ . On the other hand, if there are at least two minimizing measures, the sequence $(\mu_\beta)_{\beta\geq0}$ might not converge. The notion of zero-temperature chaotic behavior was introduced by van Enter and Ruszel in the seminal paper \cite{vanEnterRuszel2007}, see also \cite{BCLMS2011} for a more detailed proof. Nowadays, there are many examples potentials with a non-trivial Mather set that are examples of zero-temperature chaotic behavior. In the multidimensional construction of Chazottes and Hochman~\cite{ChazottesHochman2010}, while the Mather set is highly complex, the potential is geometrically simple as it is obtained as the distance function to the Mather set. In the work of Coronel and Rivera-Letelier~\cite{CoronelRiveraLetelier2015}, the space is one-dimensional and the Mather set is quite simple (it may be equal to two ergodic measures with disjoint supports), but the potential is less explicit. In the construction of two of us and E. Garibaldi~\cite{BissacotGaribaldiThieullen2017}, the Mather set is reduced to two fixed points $\{0^\infty\} \cup \{1^\infty\}$, while the potential does not have finite-range, but only summable variations; a zero-temperature phase diagram is then obtained showing a relationship between zero-temperature chaotic behavior and cancellation of the Peierls barrier. A complete understanding of the low-temperature behavior and the ground state structure in two or more dimensions, even for finite-range interactions, seems currently out of reach. The Lipschitz condition in one dimension, though similar to a strong decay of interaction, is not enough to guarantee the convergence of the equilibrium measures. As the present paper illustrates, a finite-range condition exhibits a rich set of behaviors. Recent work~\cite{gayralsabliktaati2023} shows that recursion-theoretic tools play an important role in the description of said structure.

In the discussion that follows, we shall restrict ourselves to the class of finite-range potentials. In the one-dimensional setting, the Mather set of a finite-range potential could have a rich structure of minimizing measures. It is a remarkable result that in this setting, the zero-temperature limit of Gibbs measures always exists and selects a particular minimizing measure that is not necessarily ergodic. This result was originally proven by Br\'emont~\cite{Bremont}, and was later given other proofs by Chazottes, Gambaudo and Ugalde~\cite{ChazottesGambaudoUgalde2011}, Leplaideur~\cite{Leplaideur2005}, and by Garibaldi and Thieullen~\cite{GaribaldiThieullen2011}, who also provide an algorithm that identifies the limiting minimizing measure. In the one-dimensional setting, for finite-range potentials, the Mather set is reduced to a finite disjoint union of subshifts of finite type (including periodic orbits), and the limiting minimizing measure is some barycenter of the measures of maximal topological entropy of these subshifts. The extension of Br\'emont's results to a countable alphabet has been undertaken by Jenkinson, Mauldin and Urb\'anski~\cite{JenkinsonMauldinUrbanski2005}, Morris~\cite{Morris2007bis}, Kempton~\cite{Kempton2011} for the BIP case, and recently the transitive case in \cite{BLMV}.

The status of the zero-temperature chaotic behavior for finite-range potentials in higher dimensions, $d\geq2$, is completely different. Chazottes and Hochman in~\cite{ChazottesHochman2010} constructed for every $d\geq3$ an example of a finite-range potential exhibiting a zero-temperature chaotic behavior. The dimension in their result needs to be greater or equal to $3$ because the proof relies heavily on a theorem of Hochman~\cite{Hochman} which realizes any one-dimensional effective dynamical systems as the topological factor of the subaction of a $\ZZ^3$-subshift of finite type. After this result, the only case missing was $d=2$. Our main result is an extension of their results for dimension 2.

\begin{theorem}
\label{main.theorem.generalized}
There exists a finite alphabet $\mathcal{A}$ and a finite-range potential $\varphi$ on a bidimensional full shift that exhibits the phenomenon of zero-temperature chaotic behavior.
\end{theorem}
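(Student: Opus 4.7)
The plan is to follow the overall strategy of Chazottes and Hochman and to replace their use of Hochman's $\ZZ^3$ simulation theorem by a two-dimensional substitute, paying the price by working harder on entropy estimates. First I would design the one-dimensional combinatorial skeleton of the construction: an effectively closed one-dimensional subshift $Y$ on a small alphabet containing two distinguished periodic ``phases'' $A$ and $B$, together with an increasing sequence of scales $(\ell_n)_{n\geq 1}$ such that the globally admissible configurations of $Y$ look predominantly like $A$ at scale $\ell_n$ for $n$ even and predominantly like $B$ for $n$ odd. This part is essentially dimension-independent and I would lift it nearly verbatim from \cite{ChazottesHochman2010}.

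Second, I would simulate $Y$ by a two-dimensional subshift of finite type instead of a three-dimensional one. Concretely, I would invoke a $\ZZ^2$ simulation theorem in the spirit of Aubrun--Sablik or Durand--Romashchenko--Shen realising any effectively closed one-dimensional subshift as a factor of a subaction of some $\ZZ^2$-SFT $X_{\mathrm{SFT}}$ on a finite alphabet $\mathcal{A}$, via a factor map $\pi$. The short-range potential on $\mathcal{A}^{\ZZ^2}$ is then defined by adding the local indicator of the forbidden patterns of $X_{\mathrm{SFT}}$ to a small local cost, pulled back along $\pi$, that encodes the $A$-versus-$B$ energy penalty used in \cite{ChazottesHochman2010}. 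Locality of both ingredients guarantees that $\varphi$ is short-range, and the ground states of $\varphi$ are then essentially the preimages under $\pi$ of $A$-rows and $B$-rows.

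The main obstacle, and the genuinely new ingredient, is that in dimension two the simulating SFT has many more preimages above each 1D configuration than in Hochman's three-dimensional setting, because the computation that enforces the forbidden patterns of $Y$ now has only one ``free'' direction rather than two. These extra preimages contribute an additional entropy that could completely swamp the energy difference between the $A$- and $B$-phases and destroy the alternation of equilibrium states. This is precisely what the two quantities advertised in the abstract are designed to control: the \emph{relative complexity} of the extension $\pi$ must grow slowly enough (sub-exponentially in the scale $\ell_n$) so that the fibre entropy is dominated by the energy gap between the phases, and the \emph{reconstruction function} must grow slowly enough that a typical sample in a large box faithfully reproduces the 1D phase selected by the scale $\ell_n$.

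Once these two recursive-type estimates are in hand, the argument closes along the lines of the higher-dimensional case. I would choose a sequence $\beta_n \to +\infty$ matched to the scales $(\ell_n)$ and carry out a pressure/energy comparison showing that every equilibrium state $\mu_{\beta_n}$ for $\beta_n\varphi$ concentrates near the $A$-ground state for $n$ even and near the $B$-ground state for $n$ odd, producing at least two distinct weak-$^*$ accumulation points of $(\mu_\beta)_{\beta\geq 0}$ and thereby establishing Theorem~\ref{main.theorem.generalized}. The delicate part, as emphasised above, lies entirely in the simultaneous choice of $(\ell_n, \beta_n)$ balancing the relative complexity and the reconstruction function of the $\ZZ^2$ extension.
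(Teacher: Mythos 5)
Your overall architecture --- a one-dimensional effectively closed subshift with two distinguished phases of alternating character, simulated by a $\ZZ^2$ SFT via Aubrun--Sablik (or Durand--Romashchenko--Shen), a short-range potential built from the local forbidden patterns of the simulating SFT, a sequence $\beta_k$ matched to the scales, and the relative complexity and reconstruction function of the extension as the two new quantitative inputs --- is exactly the skeleton of the paper. But there is a genuine gap in the mechanism you propose for making the equilibrium states oscillate. You suggest adding ``a small local cost, pulled back along $\pi$, that encodes the $A$-versus-$B$ energy penalty used in \cite{ChazottesHochman2010}''. Neither Chazottes--Hochman nor this paper distinguishes the phases by energy: the potential is purely the indicator $\varphi=\mathds{1}_{[\mathcal{F}]}$ of the forbidden patterns of a single SFT, so every invariant measure supported on that SFT is a ground state, and the selection at inverse temperature $\beta_k$ is entropic. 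A fixed short-range penalty favouring one phase cannot alternate with the scale; it would make the other phase the unique energy-minimizing one and force convergence rather than chaos (and if the penalty vanishes on both phases it selects nothing). The oscillation must therefore come from an entropy gap \emph{inside} the ground-state set that alternates with $k$, and here your proposal is missing the essential device: the Aubrun--Sablik SFT has zero topological entropy, so without further modification both phases carry negligible entropy and no such gap exists.

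The paper creates the gap by duplicating a symbol: the alphabet is enlarged by splitting $0$ into $0'$ and $0''$ (the ``duplicating SFT'' $X$), so that the entropy of the phase-$A$ (resp.\ phase-$B$) part of the ground-state set is bounded below by $\ln(2)$ times the frequency $f_k^A$ (resp.\ $f_k^B$) of the symbol $0$ in the $k$-th dictionary, and the one-dimensional construction makes these frequencies alternate ($f_k^A \ll f_k^B$ for $k$ even, the reverse for $k$ odd). The pressure is then squeezed between the lower bound $\ln(2) f_k^B - 2D\beta_k/\ell_k$ of Lemma~\ref{lemma:BoundFromBellow} and an upper bound (Lemma~\ref{lemma:EntropyEstimate_2}) in which the coefficient of $\ln(2) f_{k-1}^B$ is essentially $\mu_{\beta_k}\big(\Sigma^2(\mathcal{A})\setminus{\widetilde G_1}\big)$, which is what forces $\mu_{\beta_{2k}}({\widetilde G_1})\to0$ and, symmetrically, $\mu_{\beta_{2k+1}}({\widetilde G_2})\to0$. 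Your two estimates enter exactly there: the reconstruction function controls the scale $R'_k$ at which locally admissible boxes become globally admissible in their center, and the sub-exponential relative complexity shows the fibre entropy of the simulation is $o(f_{k-1}^B)$ at the intermediate scale $\ell'_k$ (an intermediate scale your sketch does not mention but which the paper needs to make the upper bound close). So you have correctly isolated the two new a priori estimates, but without the symbol duplication (or an equivalent entropy-generating device), and with an energy penalty substituted for the entropic selection, the final pressure comparison you sketch does not go through.
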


Our construction is based on the simulation theorem of Aubrun and Sablik~\cite{AubrunSablik2013} which states that every one-dimensional effectively closed subshift, extended vertically trivially to a $2$-dimensional subshift, is a topological factor of a subshift of finite type of zero topological entropy. We remark that this result was simultaneously proven by Durand, Romashchenko, and Shen~\cite{DRS1, DRS2}, which is the main tool used by Chazottes and Shinoda~\cite{ChazottesShinoda2020} in their alternative proof.

While quite intricate, the extension constructed by Aubrun and Sablik has the advantage of being quite explicit, whereas the proof by Durand, Romashchenko, and Shen is based on Kleene's fixed point theorem. We use the Aubrun-Sablik construction to produce a few estimates which are not explicit in~\cite{ChazottesHochman2010}. These estimates provide bounds that control the relative complexity of the SFT extension. More precisely, we give an explicit bound of the reconstruction function of the extension, thus avoiding the need to use Kleene's fixed point theorem.

The outline of the proof is the following. In the second section, we give the main definitions, outline the strategy's main ideas, and give the proof of the Theorem \ref{main.theorem.generalized} assuming a number of estimates that arise from a yet unspecified construction. In the third section we explain the detailed construction of the one-dimensional subshift. In the fourth section the prove the estimates pertaining the bounds for the topological entropy. In the fifth section we prove the two estimates on the reconstruction function and complexity function in the Aubrun-Sablik simulation theorem.

The present paper is part of the thesis of the third author Greg\'orio Dalle Vedove. A preliminary version was submitted to arxiv~\cite{DalleVedove2020} at about the same time when a paper of Chazottes and Shinoda~\cite{ChazottesShinoda2020} was submitted proving the same result but with a different proof.

We remark that recently, in a beautiful paper of Gayral, Sablik and Taati~\cite{gayralsabliktaati2023}, the authors obtain a recursion-theoretic classification of the set of ground states for \textit{computable} finite range interactions up to computable affine homeomorphisms. Their result not only implies ours and that of Chazottes and Shinoda, but provides a strong argument that recursion theory is not only a tool to obtain zero-temperature chaotic behavior, but that in fact is a central piece of the puzzle needed to understand the possible sets of ground states.

\section{Definitions and outline of the proof} \label{section:OutlineProof}

We summarize our setting in the following definitions. 

\begin{definition}
\label{def.shift-action}
Let $\mathcal{A}$ be a finite set called {\it alphabet} and $d\geq1$ an integer. The space of $d$-dimensional configurations $\Sigma^d(\mathcal{A}) = \mathcal{A}^{\ZZ^d}$ is the {\it d-dimensional full shift}. The {\it shift action} is the $\ZZ^d$-action given $\sigma = (\sigma^u)_{u\in\ZZ^d}$, $\sigma^u\colon \Sigma^d(\mathcal{A})\to\Sigma^d(\mathcal{A})$ defined by
 \[
\sigma^u(x)=y \ \mbox{ if } y(v) = x(u+v) \mbox{ for every } x,y \in\Sigma^d(\mathcal{A}) \mbox{ and } v \in \ZZ^d.
\]

\end{definition}

We recall that an invariant probability measure $\mu$ for the $\mathbb{Z}^d$ action is a Borel measure on $\Sigma^d(\mathcal{A})$ such that for every Borel set $B$ we have
$ \mu(\sigma^u(B)) = \mu(B) \mbox{ for every }u\in\mathbb{Z}^d$.

The set of invariant probability measures is denoted by $\mathcal{M} = \mathcal{M}(\Sigma^d(\mathcal{A}),\sigma)$.

In this article we choose a function $\varphi : \Sigma^2(\mathcal{A}) \to \mathbb{R}$ that is supposed to describe the energy contribution at the origin of the lattice $\mathbb{Z}^d$. A {\it potential} is a function $\varphi \colon \Sigma^d(\mathcal{A}) \to \mathbb{R}$.

\begin{definition}
Let $\varphi : \Sigma^d(\mathcal{A}) \to \mathbb{R}$ be a Lipschitz function.
\begin{enumerate}
\item The {\it pressure} of the potential $\varphi$ is the real number
\[
P(\varphi ) := \sup_{\mu \in\mathcal{M}} \Big\{ h(\mu)-\int\! \varphi \, d\mu \Big\}.
\]
where  $h(\mu)$ denotes the {\it Kolmogorov-Sinai  entropy} of $\mu$ (definition \ref{definition:EntropyDefinition}).  
\item  An {\it equilibrium measure at inverse temperature $\beta$}, is an invariant probability measure $\mu_\beta$ that maximizes the pressure
\[
P(\beta \varphi ) = h(\mu_\beta) - \int \! \beta  \varphi \,d \mu_\beta.
\]
\end{enumerate}
\end{definition}

The set of equilibrium measures at inverse temperature $\beta$ is denoted by $\mathcal{M}_{e}(\beta \varphi)$.

The general strategy follows the outlines presented in van Enter and Ruszel~\cite{vanEnterRuszel2007}, Coronel and Rivera-Letelier~\cite{CoronelRiveraLetelier2015}, and Chazottes and Hochman~\cite{ChazottesHochman2010}. 

\begin{definition} \label{definition:ChaoticConvergence}
The phenomenon of {\it zero-temperature chaotic behavior} holds for a potential $\varphi : \Sigma^d(\mathcal{A}) \to \mathbb{R}$ when there exists a sequence of inverse temperatures $(\beta_k)_{k\geq0}$ going to infinity and  two disjoint compact sets ${\widetilde G_1}$ and ${\widetilde G_2}$ of $\Sigma^d(\mathcal{A})$ such that, if for every $k\geq0$, $\mu_{\beta_k}$ is any choice of an ergodic equilibrium measure $\mu_{\beta_k} \in \mathcal{M}_{e}(\beta_k \varphi)$, the support of any weak${^*}$-accumulation point of the odd subsequence $(\mu_{\beta_{2k+1}})_{k\geq0}$, respectively of the even subsequence  $(\mu_{\beta_{2k}})_{k\geq0}$, is included in ${\widetilde G_1}$, respectively in ${\widetilde G_2}$.
\end{definition}

We shall restrict ourselves to the class of {\it finite-range} potentials.

\begin{definition}
Let $\mathcal{A}$ be a finite set and $D\geq1$ be an integer. A function $\varphi\colon \Sigma^d(\mathcal{A}) \to \mathbb{R}$ is {\it finite-range (of range $D)$} if
\[
 x|_{\llbracket 1,D\rrbracket^d} = y|_{\llbracket 1,D \rrbracket^d} \ \Rightarrow \ \varphi(x) = \varphi(y)  \mbox{ for every }x,y \in \Sigma^d(\mathcal{A}),
\]
where $ x|_{\llbracket 1,D\rrbracket^d}$ denotes the restriction of a configuration $x$ to the square $\llbracket 1,D\rrbracket^d$.
\end{definition}

We recall now several definitions.

\begin{definition}
Let $\mathcal{A}$ be a finite set.
\begin{enumerate}
\item If $S$ is a finite subset of $\mathbb{Z}^d$, a {\it pattern with support $S$} is a partial configuration $p \in \mathcal{A}^{S}$. The set $S=\Supp(p)$ is called the {\it support} of $p$. For $d=1$ a pattern is called a {\it word}. If $x \in \Sigma^d(\mathcal{A})$, $p=x|_S$ denotes the pattern obtained by taking the restriction of $x$ to $S$. A pattern $p$ of size $n$ is a pattern of the form $p \in \mathcal{A}^{\llbracket 1, n \rrbracket^d}$.
\item The shift action extends to an action over the set of patterns, that is, for every $u \in \mathbb{Z}^d$ and $p \in \mathcal{A}^S$ we say that $\sigma^u(p)=p'$ if and only if $p' \in \mathcal{A}^{S-u}$ and for every $v \in S-u$ we have $p'(v)=p(u+v)$.
\item A pattern $p\in\mathcal{A}^S$ {\it appears} in a configuration $x\in\Sigma^d(\mathcal{A})$ if there exists  $u\in\mathbb{Z}^d$ such that $\sigma^u(x)|_{S}=p$. We write $p \sqsubset x$. More generally a pattern $p\in\mathcal{A}^S$ {\it appears} in a pattern $q\in \mathcal{A}^T$ if there exists $u\in\mathbb{Z}^d$ such that $S \subseteq T-u$ and $\sigma^u(q)|_{S}=p$.
\end{enumerate}
\end{definition}

\begin{definition}
\label{def.subshift}
A {\it subshift} $X$ is a closed subset of $\Sigma^d(\mathcal{A})$ which is invariant by the shift action $\sigma$.
\end{definition}

Subshifts can also be given a convenient combinatorial description by exhibiting a set of forbidden patterns, that is, let $\mathcal{F} \subseteq \bigsqcup_{n\geq1} \mathcal{A}^{\llbracket 1,n \rrbracket^d}$ and consider the set of all $x \in \Sigma^d(\mathcal{A})$ such that $p \not\sqsubset x$ for every pattern $p \in \mathcal{F}$, then it follows that this set is closed and invariant under the shift action. This motivates the following definition.

\begin{definition}
Let $\mathcal{A}$ be a finite set and $\mathcal{F} \subseteq \bigsqcup_{n\geq1} \mathcal{A}^{\llbracket 1,n \rrbracket^d}$ be a set of patterns called {\it the set of forbidden patterns}. A subshift $X \subseteq \Sigma^d(\mathcal{A})$ is said to be {\it generated by $\mathcal{F}$} if for every $x \in X$ and $p \in \mathcal{F}$, $p$ does not appear in $x$. More formally
\[
X=\Sigma^d(\mathcal{A},\mathcal{F}) :=  \big\{ x \in \Sigma^d(\mathcal{A}) : \forall\, p \in \mathcal{F}, \ p \not\sqsubset x \big\}. 
\]
A subshift $X=\Sigma^d(\mathcal{A},\mathcal{F})$ is said to be a {\it subshift of finite type} (or SFT) if the set of forbidden patterns $\mathcal{F}$ is a finite set.
\end{definition}

It is clear that every subshift is generated by some set of forbidden patterns, namely, $X$ is generated by the full set of forbidden patterns $\mathcal{F}_X$ where $p \in \mathcal{F}_X$ if and only if $p$ does not appear in any $x \in X$. However, we remark that a fixed subshift $X$ can be generated by different sets of forbidden patterns. The first step of our construction consists in choosing a 1-dimensional subshift $\widetilde X \subseteq \Sigma^1(\widetilde{\mathcal{A}})$ in the following way. The alphabet $\widetilde{\mathcal{A}}$ is made of two alphabets $\widetilde{\mathcal{A}} = \widetilde{\mathcal{A}}_1 \cup \widetilde{\mathcal{A}}_2$ where
\[
\widetilde{\mathcal{A}}_1 = \{0,1\} \quad\mbox{and}\quad \widetilde{\mathcal{A}}_2 = \{0,2\}.
\]
The subshift $\widetilde X = \bigcap_{k\geq0}\widetilde X_k$ is obtained as the intersection of a decreasing sequence of subshifts $\widetilde X_k$ of controlled complexity
\begin{gather*}
\widetilde X_{k+1} \subseteq \widetilde X_k.
\end{gather*}
Each $\widetilde X_k$ contains a disjoint union of two  subshifts 
\begin{gather*}
\widetilde X_{k}^A \sqcup \widetilde X_{k}^B \subseteq \widetilde X_k.
\end{gather*}
The subshift $\widetilde X_k^A$ consists of configurations over the symbols $\{0,1\}$ and satisfies,
\begin{gather*}
\{ 1^\infty \} \subset \widetilde X_{k+1}^A \subseteq \widetilde X_{k}^A \subseteq \Sigma^1(\widetilde{\mathcal{A}}_1), \quad \widetilde X^A :=  \bigcap_{k\geq0}\widetilde X_{k}^A.
\end{gather*}
The subshift $\widetilde X_k^B$ consists of configurations over the symbols $\{0,2\}$ and satisfies,
\begin{gather*}
 \{2^\infty\} \subset \widetilde X_{k+1}^B \subseteq \widetilde X_{k}^B \subseteq \Sigma^1( \widetilde{\mathcal{A}}_2), \quad \widetilde X^B :=  \bigcap_{k\geq0} \widetilde X_{k}^B.
\end{gather*} 
The two subshifts $\widetilde X_{k}^A$ and $\widetilde X_{k}^B$ are chosen so that their relative complexity alternates depending on whether $k$ is odd or even. We use the symbol $0 \in \widetilde{\mathcal{A}}$ to measure the complexity or the frequency of $0$ in each word of $\widetilde X_{k}^A$ or $\widetilde X_{k}^B$. We finally make sure that $\widetilde X$ is effectively closed as in the following definition. In what follows, we use the definition of {\it Turing machine} as in Sipser~\cite[Definition 3.3]{Sipser}.

\begin{definition} \label{Definition:TimeEnumerationFunction}
A subshift $\widetilde X \subseteq \Sigma^1(\widetilde{\mathcal{A}})$ is said to be effectively closed if there exists a set of forbidden words $\widetilde{\mathcal{F}} \subseteq \bigsqcup_{n\geq1} \widetilde{\mathcal{A}}^{\llbracket 1,n \rrbracket}$ such that $\widetilde X = \Sigma^1(\widetilde{\mathcal{A}},\widetilde{\mathcal{F}})$ and  $\widetilde{\mathcal{F}}$  is enumerated by a Turing machine $\widetilde{\mathbb{M}}$.   
The {\it time enumeration function} $T^{\widetilde X} \colon \mathbb{N}_* \to \mathbb{N}_*$ associated to $\widetilde{\mathbb{M}}$ is given for $n\geq 1$ as the smallest positive integer $T^{\widetilde X}(n)$ such that $\widetilde{\mathbb{M}}$ halts on every word of $\widetilde{\mathcal{F}}$ of size at most $n$ in at most $T^{\widetilde X}(n)$ steps.
\end{definition}

In a second step of the construction we use the Aubrun-Sablik simulation theorem. Their result states that for every effectively closed subshift, one can find a two-dimensional subshift of finite type whose restriction to a one-dimensional subaction is a topological extension of the original effectively closed subshift. More precisely


\begin{theorem}[Aubrun-Sablik \cite{AubrunSablik2013}] \label{theorem:AubrunSablikSimulation}
Let $\widetilde{\mathcal{A}}$ be a finite set and $\widetilde X =\Sigma^1(\widetilde{\mathcal{A}},\widetilde{\mathcal{F}})$ be an effectively closed subshift. There exists a finite alphabet $\mathcal{B}$, a subshift of finite type of zero topological entropy $\widehat X = \Sigma^2(\widehat{\mathcal{A}},\widehat{\mathcal{F}})$, where $\widehat{\mathcal{A}} = \widetilde{\mathcal{A}}\times\mathcal{B}$, $\widehat{\mathcal{F}} \subseteq \widehat{\mathcal{A}}^{\llbracket 1,D \rrbracket^2}$ for some integer $D\geq2$, such that if $\widehat\pi : \widehat{\mathcal{A}} \to \widetilde{\mathcal{A}}$ is the first projection and $\widehat\Pi : \Sigma^2(\widehat{\mathcal{A}}) \to \Sigma^2(\widetilde{\mathcal{A}})$ is the 1-block factor map defined component wise by
\[
 \widehat\Pi(x) = (\widehat\pi(x_u))_{u\in\mathbb{Z}^2} \mbox{ for every } x = (x_u)_{u\in\mathbb{Z}^2} \in \Sigma^2(\widehat{\mathcal{A}}),
\]
then $\widehat\Pi(\widehat X)$ simulates $\widetilde X$ in the sense $\widehat\Pi(\widehat X)  = \widetilde{\widetilde X}$ where
\[
\widetilde{\widetilde X} :=  \big\{ \widetilde x \in \Sigma^2(\widetilde{\mathcal{A}}) : \forall\, v \in \mathbb{Z}, \ (\widetilde x_{(u,v)})_{u \in\mathbb{Z}} = (\widetilde x_{(u,0)})_{u \in\mathbb{Z}} \in \widetilde X \big\}.
\]
The subshift $\widehat X$ will be called later the {\it Aubrun-Sablik SFT simulating $\widetilde X$}. The subshift $\widetilde{\widetilde X}$ will be called later the {\it vertically aligned subshift replicating $\widetilde X$}.
\end{theorem}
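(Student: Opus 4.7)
The approach I would take is to build a two-layer SFT on an alphabet $\widehat{\mathcal{A}} = \widetilde{\mathcal{A}} \times \mathcal{B}$: a rigid hierarchical \emph{scaffold} encoded in $\mathcal{B}$ that partitions $\mathbb{Z}^2$ into nested squares of side $\sim 2^k$ at every scale $k \geq 1$, together with a \emph{computation} layer that uses each scale-$k$ square as a finite workspace on which the enumerator $\widetilde{\mathbb{M}}$ of $\widetilde{\mathcal{F}}$ is simulated against the horizontal $\widetilde{\mathcal{A}}$-row passing through that square. The rough idea is to make ``$x \in \widehat{X}$'' equivalent to ``$\widetilde{\mathbb{M}}$ halts on no subword of any row of $\widehat{\Pi}(x)$''.

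First I would construct the scaffold via a Robinson-type self-similar tiling. Using finitely many local adjacency rules, one forces each site to belong to a unique square at every scale $k$, with level-$(k{+}1)$ squares obtained by gluing four level-$k$ squares and borders; the valid tilings form an almost-deterministic family, which is what ultimately supplies zero entropy. Independently, a nearest-neighbour SFT rule imposes $x^{(1)}_{(u,v)} = x^{(1)}_{(u,v+1)}$ on the $\widetilde{\mathcal{A}}$-coordinate, so the projection $\widehat{\Pi}(x)$ is automatically vertically constant; this already yields the inclusion $\widehat{\Pi}(\widehat{X}) \subseteq \widetilde{\widetilde{X}}$ once we check that rows are in $\widetilde{X}$.

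Next I would use the interior of each scale-$k$ square as the space-time diagram of $\widetilde{\mathbb{M}}$: one row serves as the tape, the vertical direction encodes successive configurations of the machine, and local SFT rules (transition table, head movement, tape preservation away from the head) implement one step of $\widetilde{\mathbb{M}}$ per row. The tape is initialized, via pointer symbols in $\mathcal{B}$, with a length-$n_k$ subword of the $\widetilde{\mathcal{A}}$-row along the square's base; the distribution of squares over all scales and horizontal positions ensures that every finite window of every row is the input of infinitely many simulations. The single forbidden local pattern is ``head in a halting state''. Hence $x$ is legal iff $\widetilde{\mathbb{M}}$ never halts on any subword of any row, which is exactly $\widehat{\Pi}(x) \in \widetilde{\widetilde{X}}$.

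Surjectivity follows by lifting: given $\widetilde{x} \in \widetilde{\widetilde{X}}$, place the canonical scaffold, copy $\widetilde{x}$ into the first coordinate, and fill the computation layer with the deterministic non-halting execution traces of $\widetilde{\mathbb{M}}$. For zero entropy, the number of valid $n \times n$ patterns is bounded by (constantly many) scaffold phases times the number of $\widetilde{\mathcal{A}}$-rows of length $n$, i.e.\ at most $C \cdot |\widetilde{\mathcal{A}}|^n$, giving $h_{top}(\widehat{X}) \leq \lim_n \tfrac{1}{n^2}\log(C|\widetilde{\mathcal{A}}|^n) = 0$. The main obstacle, and what constitutes the technical core of the Aubrun--Sablik construction, is engineering the scaffold so that computations at all scales are non-interfering and every finite subword of every row is genuinely fed, with sufficient tape and at least $T^{\widetilde X}(n)$ time, as input to infinitely many computations; this requires a delicate ``input bus'' threaded through the hierarchy, together with careful bookkeeping of pointers and synchronization symbols inside $\mathcal{B}$.
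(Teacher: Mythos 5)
Your sketch follows the right general philosophy (a hierarchical scaffold in the $\mathcal{B}$-layer plus simulated Turing computations that check the vertically constant $\widetilde{\mathcal{A}}$-rows against $\widetilde{\mathcal{F}}$, with halting forbidden), and it is worth noting that the paper itself does not prove this theorem: it imports it from Aubrun--Sablik and only recalls, in Section 5, the specific architecture of their construction (a substitution-generated grid layer producing vertical computation strips of every width $2^n$ with clocks restarting every $2^{2^n}+2$ steps, plus two Wang-tile layers, one enumerating forbidden words and one searching for them in ``responsibility zones''), because those specifics are what yield the reconstruction and complexity bounds of Propositions \ref{proposition:ReconstructionFunctionEstimate} and \ref{proposition:ComplexityFunctionEstimate}.

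As a proof, however, your proposal has a genuine gap: you explicitly defer ``the technical core'', and that core is the entire content of the theorem. Concretely, (i) in a Robinson-type scaffold every site belongs to a square of every scale $k$, so with a \emph{finite} alphabet $\mathcal{B}$ you cannot superimpose one space-time diagram per scale at each site; you must arrange for computations of different scales to occupy essentially disjoint regions, or to time-share a strip as Aubrun--Sablik do with their clock, and this interacts badly with your requirement that the tape of a scale-$k$ computation be initialized from the row at its base, since the relevant cells may be covered by other computation zones. (ii) Even granting non-interference, you must guarantee with finite local rules that \emph{every} length-$n$ window of the row is fed, with at least $T^{\widetilde X}(n)$ steps and enough tape, to some computation that can flag it, and that each computation knows which horizontal zone it is responsible for; without this, the inclusion $\widehat\Pi(\widehat X)\subseteq\widetilde{\widetilde X}$ (every forbidden word eventually detected) is not established, and neither is surjectivity, since filling ``the deterministic non-halting traces'' at all scales simultaneously presupposes exactly the synchronization you have not specified. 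A minor additional point: the entropy bound $C\cdot|\widetilde{\mathcal{A}}|^n$ for $n\times n$ patterns is too optimistic, because within a finite window the computation layers are determined by the grid and the row only up to boundary data; one gets $\exp(O(n))$ patterns per window rather than a constant multiple, which still gives $h_{top}(\widehat X)=0$, but the determinism argument needs to be said.
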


The remarkable fact is that, although the initial set of forbidden words $\widetilde{\mathcal{F}}$ might be countably infinite, by adding different ``colors'' $\mathcal{B}$ to the initial alphabet $\widetilde{\mathcal{A}}$ and by imposing a finite set $\widehat{\mathcal{F}}$ of forbidden rules on these juxtaposed colors, the new set of configurations $\widehat X$ which respects these rules, after applying the projection $\widehat{\Pi}$, describes exactly the set of  vertically aligned configurations of $\widetilde X$.

Our proof requires two a priori estimates, see inequalities (\ref{estimate_1}) and (\ref{estimate_2}), that were not stated explicitly in \cite{ChazottesHochman2010}. We recall first several definitions.

\begin{definition} \label{definition:ReconstructionFunction}
Let $\mathcal{A}$ be a finite alphabet,  $\mathcal{F} \subseteq \bigsqcup_{n\geq1}\mathcal{A}^{\llbracket 1,n\rrbracket^d}$ be a  set of forbidden patterns, and $X=\Sigma^d(\mathcal{A},\mathcal{F})$. 
\begin{enumerate}
\item A pattern $w \in \mathcal{A}^S$  is said to be {\it locally $\mathcal{F}$-admissible} if no pattern $p$ of $\mathcal{F}$ appears in $w$.
\item A pattern $w \in \mathcal{A}^S$ is said to be {\it globally $\mathcal{F}$-admissible} if $w$ appears in some configuration $x \in \Sigma^d(\mathcal{A},\mathcal{F})$.
\item The {\it reconstruction function} of $X$ is the function $R^X\colon \mathbb{N}_* \to \mathbb{N}_*$ such that, if $n\geq1$, then $R^X(n)$ is the smallest integer $R\geq n$ such that, for every locally $\mathcal{F}$-admissible pattern $p \in \mathcal{A}^{\llbracket -R, R \rrbracket^d}$, the subpattern $p|_{\llbracket -n,n \rrbracket^d}$ is globally $\mathcal{F}$-admissible.
\end{enumerate}
\end{definition}

By a standard compactness argument, it follows that every subshift admits a well defined reconstruction function. Our proof relies on the fact that the reconstruction function of the Aubrun-Sablik SFT increases at most exponentially for a particular choice of the initial 1-dimensional set of forbidden words $\widetilde{\mathcal{F}}$.

The first a priori estimate is given in the Inequality (\ref{estimate_1}) of the following proposition:

\begin{proposition} \label{proposition:ReconstructionFunctionEstimate}
Let $\widetilde{\mathcal{A}}$ be a finite set, $\widetilde{\mathcal{F}} = \bigsqcup_{n\geq1}\widetilde{\mathcal{F}_n}$, $\widetilde{\mathcal{F}_n} \subseteq  \widetilde{\mathcal{A}}^{\llbracket 1,n \rrbracket}$ be a set of forbidden words enumerated by a Turing machine $\widetilde{\mathbb{M}}$, $T^{\widetilde X}$ be the time enumeration function associated to $\widetilde{\mathbb{M}}$ (Definition \ref{Definition:TimeEnumerationFunction}), and $\widehat X$ be the Aubrun-Sablik SFT simulating $\widetilde X$ given in  \ref{theorem:AubrunSablikSimulation}. We assume that $\widetilde{\mathcal{F}}$ satisfies
:
\begin{enumerate}
\item \label{item:ReconstructionFunctionEstimate_1} $\widetilde{\mathbb{M}}$ enumerates all patterns of $\widetilde{\mathcal{F}}$ in increasing order (words of $\widetilde{\mathcal{F}}_n$ are enumerated before those in $\widetilde{\mathcal{F}}_{n+1}$). 
\item \label{item:ReconstructionFunctionEstimate_2} $R^{\widetilde X}(n) \leq Cn$, for some constant $C$,
\item \label{item:ReconstructionFunctionEstimate_3} $T^{\widetilde X}(n) \leq P(n)|\widetilde{\mathcal{A}}|^n$, for some polynomial $P(n)$.
\end{enumerate}
Then
\begin{equation}\label{estimate_1}
  \limsup_{n\to+\infty} \frac{1}{n}\log(R^{\widehat X}(n)) < +\infty.  
\end{equation}
\end{proposition}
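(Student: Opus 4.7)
The plan is to combine the internal structure of the Aubrun-Sablik construction with the three quantitative assumptions on $\widetilde X$. Recall that the SFT $\widehat X$ produced in Theorem~\ref{theorem:AubrunSablikSimulation} superimposes several layers over the alphabet $\widehat{\mathcal{A}}=\widetilde{\mathcal{A}}\times\mathcal{B}$: a Robinson-type hierarchical substitution system that forces the plane to decompose into nested macro-tiles of sizes $(N_k)_{k\geq0}$ growing geometrically (with $N_{k+1}/N_k$ bounded below by a constant $\lambda>1$), together with a synchronised simulation of the Turing machine $\widetilde{\mathbb{M}}$ running inside each macro-tile. The level-$k$ machine reads the 1-dimensional pattern of length $\Theta(N_k)$ written along the bottom of its macro-tile and is given a space-time budget polynomial in $N_k$, which enables it to run $\widetilde{\mathbb{M}}$ long enough to enumerate every forbidden word produced in at most $\Theta(N_k)$ steps and to reject the tile if such a word appears inside its input.

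First I would establish a rigidity lower bound. A locally $\widehat{\mathcal{F}}$-admissible pattern $p$ in the window $\llbracket -R,R\rrbracket^2$ must contain the full hierarchical skeleton up to the largest level $k$ with $N_k\leq c R$, because the relative positions of the level-$k$ frontier signals, synchronisation markers and computation tape are determined by finitely many local matching rules of $\widehat{\mathcal{F}}$. In particular, the tape content and the absence of a rejection signal visible in $p$ imply that $\widetilde{\mathbb{M}}$ has produced no forbidden word of length at most $n_k$, where $n_k$ is the largest integer satisfying $T^{\widetilde X}(n_k)\leq c'N_k$. By assumption~(\ref{item:ReconstructionFunctionEstimate_3}) combined with the monotone enumeration hypothesis~(\ref{item:ReconstructionFunctionEstimate_1}), one has $n_k\geq c''\log N_k/\log|\widetilde{\mathcal{A}}|$, so the 1-dimensional projection $\widehat\Pi(p)$ read along the bottom row of the macro-tile is locally $\widetilde{\mathcal{F}}$-admissible up to length $n_k$.

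Next I would use assumption~(\ref{item:ReconstructionFunctionEstimate_2}), the linear reconstruction bound $R^{\widetilde X}(n)\leq Cn$, to upgrade local admissibility of the 1D projection to global admissibility. To show that the central subpattern $p|_{\llbracket -n,n\rrbracket^2}$ extends to an element of $\widehat X$, it suffices to impose $n_k\geq C(2n+1)$: then the bottom-row projection of $p|_{\llbracket -n,n\rrbracket^2}$ is locally $\widetilde{\mathcal{F}}$-admissible on a window larger than $R^{\widetilde X}(2n+1)$, hence globally $\widetilde{\mathcal{F}}$-admissible, and therefore extends to some $\widetilde x\in\widetilde X$. Applying Theorem~\ref{theorem:AubrunSablikSimulation} to the vertically aligned configuration $\widetilde{\widetilde x}$ associated with $\widetilde x$ produces a configuration of $\widehat X$ projecting onto $\widetilde{\widetilde x}$ and containing $p|_{\llbracket -n,n\rrbracket^2}$, which witnesses global $\widehat{\mathcal{F}}$-admissibility.

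Inverting the estimate $n_k\geq c''\log N_k/\log|\widetilde{\mathcal{A}}|$, the condition $n_k\geq C(2n+1)$ is ensured as soon as $N_k\geq |\widetilde{\mathcal{A}}|^{C'n}$ for a suitable $C'$, whence $R^{\widehat X}(n)\leq |\widetilde{\mathcal{A}}|^{C''n}$ and $\limsup_{n\to\infty}\frac{1}{n}\log R^{\widehat X}(n)<\infty$, which is~(\ref{estimate_1}). The main obstacle I expect lies in the rigidity step of the second paragraph: the Aubrun-Sablik SFT has many interacting layers (hierarchical signals, computation tape, synchronisation and input/output channels), and one must verify that each of them is reconstructible from a window of size $\Theta(N_k)$ with multiplicative constants \emph{uniform in} $k$. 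This is essentially a bookkeeping exercise on the Aubrun-Sablik tile set, but it is precisely the step in which the polynomial factor in assumption~(\ref{item:ReconstructionFunctionEstimate_3}) is absorbed into the exponential bound, so it deserves the most careful attention and is presumably what the later sections of the paper devote to.
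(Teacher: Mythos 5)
Your outline reproduces the paper's \emph{recursive} estimate in spirit (use the computation layers visible in a large locally admissible window to certify that all forbidden words up to length $R^{\widetilde X}(2n+1)\leq C(2n+1)$ have been enumerated and checked, with the window size exponential in $n$ because of hypothesis (3)), but the final step has a genuine gap. You conclude global $\widehat{\mathcal F}$-admissibility of $p|_{\llbracket -n,n\rrbracket^2}$ by extending its first-layer projection to some $\widetilde x\in\widetilde X$ and invoking Theorem \ref{theorem:AubrunSablikSimulation} to get $\widehat x\in\widehat X$ with $\widehat\Pi(\widehat x)=\widetilde{\widetilde x}$. That only controls the $\widetilde{\mathcal A}$-component. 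The pattern $p$ also carries the $\mathcal B$-symbols of the auxiliary layers (the substitution/grid layer, the clock, and the two machine layers), and the configuration $\widehat x$ supplied by the simulation theorem has no reason to agree with those symbols on $\llbracket -n,n\rrbracket^2$: its hierarchical skeleton, clock phases and space-time diagrams are its own. So what you have witnessed is global admissibility of $\widehat\Pi(p)|_{\llbracket -n,n\rrbracket^2}$, not of $p|_{\llbracket -n,n\rrbracket^2}$ as a pattern over $\widehat{\mathcal A}$, which is what $R^{\widehat X}$ requires.

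Closing this gap is precisely the \emph{structural} half of the paper's proof, which is absent from your argument (and cannot be dismissed as bookkeeping, since it contributes its own bound). One must show that a sufficiently large locally admissible window forces the auxiliary layers themselves to be globally admissible and completable compatibly with the chosen one-dimensional extension: for the substitution layer this uses Mozes's theorem that locally admissible patterns of the SFT extension of a primitive substitution are globally admissible; for the clock one must see a complete cycle, which forces the window to contain a stack of $2^{2^k}+2$ level-$k$ macrotiles vertically (height of order $2^k(2^{2^k}+2)$, giving the bound $\mathcal O(\sqrt n\,C_0^{\sqrt n})$ in the paper); and layers 3 and 4 are then determined by layer 2 and the clock restarts. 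The same omission undercuts your rigidity step: ``absence of a rejection signal'' certifies that $\widetilde{\mathbb M}$ has checked words of length up to $n_k$ only if the window witnesses a full computation run since a clock restart, which again needs the doubly exponential vertical extent, not merely $N_k\leq cR$; relatedly, the time budget of a width-$2^m$ strip in Aubrun--Sablik is $2^{2^m}+2$, not polynomial in the macrotile size, so your intermediate estimate $n_k\gtrsim \log N_k$ is not how the construction actually trades window size for checked word length, even though your final bound $R^{\widehat X}(n)\leq |\widetilde{\mathcal A}|^{C''n}$ happens to be of the same exponential order as the paper's $\mathcal O(nK^n)$.
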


The second a priori estimate, see Inequality (\ref{estimate_2}) of the next proposition, improves the computation of the complexity of the Aubrun-Sablik extension.
It is well know that both the vertically aligned subshift $\widetilde{\widetilde X}$ and the Aubrun-Sablik SFT $\widehat X$ have zero topological entropy. We actually need a stronger notion of complexity.

We recall first several definitions.

\begin{definition} \label{definition:Language}
Let $X\subseteq \Sigma^d(\mathcal{A})$ be a subshift. 
\begin{enumerate}
\item The {\it language of size $n$ of  $X$} is the set of  patterns of size $n$ that appear in $X$
\[
\mathcal{L}(X, n) :=  \Big\{ p \in \mathcal{A}^{\llbracket 1,n \rrbracket^d} :  \exists x \in X, \ \text{s.t.} \ p = x|_{\llbracket 1, n\rrbracket^d} \Big\}.
\]
\item The {\it language of $X$} is the disjoint union of languages of size $n$
\[
\mathcal{L}(X) := \bigsqcup_{n\geq1} \mathcal{L}(X,n).
\]
\end{enumerate}
\end{definition}

\begin{definition}
Let $\widetilde{X}=\Sigma^1(\widetilde{\mathcal{A}},\widetilde{\mathcal{F}})$ be an effectively closed subshift, and $\widehat{X} =\Sigma^2(\widehat{\mathcal{A}},\widehat{\mathcal{F}})$ be the Aubrun-Sablik  SFT given in theorem \ref{theorem:AubrunSablikSimulation} that simulates $\widetilde{X}$. The {\it relative complexity function} of the simulation is the function $C^{\widehat{X}} \colon \mathbb{N}_* \to \mathbb{N}_*$ defined by
\[
C^{\widehat{X}}(n):=\sup_{\widetilde{w}\in\mathcal{L}(\widehat\Pi(\widehat X),n)} \Card\big( \big\{ \widehat{w}\in\mathcal{L}(\widehat{X},n):\widehat{\Pi}(\widehat{w})=\widetilde{w} \big\}\big).
\]
\end{definition}

By construction the topological entropy of the simulating SFT is zero. This implies that
\[
\lim_{n\to+\infty} \frac{1}{n^2} \log(C^{\widehat X}(n)) =0.
\]
We prove a stronger result.

\begin{proposition} \label{proposition:ComplexityFunctionEstimate}
Let $\widetilde{X}=\Sigma^1(\widetilde{\mathcal{A}},\widetilde{\mathcal{F}})$ be an effectively closed subshift as described in proposition  \ref{proposition:ReconstructionFunctionEstimate},   and $\widehat{X} =\Sigma^2(\widehat{\mathcal{A}},\widehat{\mathcal{F}})$ be the Aubrun-Sablik  SFT given in theorem \ref{theorem:AubrunSablikSimulation} that simulates $\widetilde{X}$. Then
\begin{equation}\label{estimate_2}
\limsup_{n\to+\infty} \frac{1}{n} \log(C^{\widehat X}(n)) < +\infty.
\end{equation}
\end{proposition}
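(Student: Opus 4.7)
The approach is to exploit the hierarchical self-similar structure of the Aubrun-Sablik construction to bound the number of color layers compatible with a given projection. Recall that the alphabet $\widehat{\mathcal{A}}=\widetilde{\mathcal{A}}\times\mathcal{B}$ and the finite rule set $\widehat{\mathcal{F}}$ produce a Robinson-type fractal skeleton on countably many scales $N_1<N_2<\cdots$, where each macro-tile at level $k$ carries out, along its boundary, a simulation of the Turing machine $\widetilde{\mathbb{M}}$ verifying that certain subwords of $\widetilde{w}$ of length $m_k$ do not belong to $\widetilde{\mathcal{F}}$. The plan is to give, for a fixed $\widetilde{w}\in\mathcal{L}(\widehat{\Pi}(\widehat{X}),n)$, an explicit parametrisation of the set of $\mathcal{B}$-labellings $\widehat{w}\in\mathcal{L}(\widehat{X},n)$ satisfying $\widehat{\Pi}(\widehat{w})=\widetilde{w}$, and to count its cardinality.

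First, I would isolate the independent degrees of freedom for such a $\mathcal{B}$-labelling on the window $\llbracket 1,n\rrbracket^{2}$: for each level $k$ whose grid intersects the window, one must specify (a) the offset of that grid with respect to the origin, and (b) the state of the Turing simulation carried by each partial level-$k$ macro-tile crossing the boundary of $\llbracket 1,n\rrbracket^{2}$. Once these are chosen coherently, the Aubrun-Sablik finite rules together with the determinism of $\widetilde{\mathbb{M}}$ force the labelling at every other position inside the window. Second, using that the scales $N_k$ grow very rapidly (driven by the in-order enumeration assumption (1) and the time bound (3) of Proposition~\ref{proposition:ReconstructionFunctionEstimate}, which together give $N_{k+1}$ at least exponential in $N_k$), one sees that at most $K(n)=O(\log n)$ levels are visible in a window of side $n$. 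Third, at each visible level $k\leq K(n)$, the offset contributes at most $N_k^{2}\leq n^{2}$ choices, while the state of a partial macro-tile on the boundary of the window contributes only a polynomial-in-$N_k$ number of choices, because the inputs of the simulated computation are read from $\widetilde{w}$ and are therefore already fixed. Multiplying over levels yields a bound of the shape
\[
C^{\widehat{X}}(n)\leq \prod_{k=1}^{K(n)} n^{O(1)} \;=\; 2^{O((\log n)^{2})},
\]
which is sub-exponential in $n$ and a fortiori implies~(\ref{estimate_2}).

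The main obstacle will be the third step, namely certifying that inside each partially visible macro-tile, the number of valid Turing-machine configurations is at most polynomial in the tile size. This hinges on the fact that the Aubrun-Sablik encoding makes the input of the Turing machine at level $k$ essentially a subword of $\widetilde{w}$ read from the projected configuration; combined with hypothesis (3) of Proposition~\ref{proposition:ReconstructionFunctionEstimate}, namely $T^{\widetilde{X}}(m)\leq P(m)|\widetilde{\mathcal{A}}|^{m}$, this controls $N_k$ by an exponential in $m_k$ and forces the macro-tile content to be essentially determined by $\widetilde{w}$ up to the boundary data already counted in step one. Making this precise scale by scale, while taking care not to double-count freedom across consecutive levels or with the boundary data of the next higher scale, is the central technical issue and will occupy the bulk of the proof carried out in Section~5.
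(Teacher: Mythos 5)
There is a genuine gap, and it sits exactly where you place it: step three. You assert that, once $\widetilde{w}$ and the grid offsets are fixed, each partially visible macro-tile admits only polynomially many valid fillings, ``because the inputs of the simulated computation are read from $\widetilde{w}$ and are therefore already fixed.'' This is not true in the Aubrun--Sablik construction. The grid/clock layer in a computation strip of width $2^k$ runs a counter for $2^{2^k}+2$ vertical steps, and the third layer runs the enumeration machine $\widetilde{\mathbb{M}}$, whose computation does not depend on $\widetilde{w}$ at all. What is visible of these tapes inside the window $\llbracket 1,n\rrbracket^2$ depends on the vertical phase of the window inside the (doubly exponential) clock period, and distinct phases produce exponentially many distinct visible rows: already the low-order digits of the counter sweep through roughly $2^{\min(n,2^k)}$ values, and none of this information appears in the projection $\widetilde{w}$, so all of it is counted by $C^{\widehat{X}}(n)$. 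Consequently the advertised bound $2^{O((\log n)^2)}$ is not just unproven but false; the relative complexity of the Aubrun--Sablik SFT is genuinely exponential in $n$, which is precisely why the proposition only claims $\limsup_n \frac{1}{n}\log C^{\widehat{X}}(n)<+\infty$. A secondary misstep: the hierarchy of scales in the simulating SFT is fixed by the construction (strips of width $2^k$), independently of $\widetilde{\mathcal{F}}$; hypotheses (1) and (3) of Proposition \ref{proposition:ReconstructionFunctionEstimate} play no role in the complexity estimate (they are used only for the reconstruction function), so they cannot be invoked to control the visible levels or the macro-tile contents.

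The correct (and much softer) argument, which is the one the paper uses, only needs determinism in the vertical direction, not determination by $\widetilde{w}$. Bound $C^{\widehat{X}}(n)$ by the product of the per-layer complexities: the vertically aligned layer contributes at most $|\widetilde{\mathcal{A}}|^{n}$; the substitutive grid/clock layer, being generated by a substitution, has only $O(n^2)$ patterns of size $n$; and, given the grid layer, each of the two Turing-machine layers evolves deterministically upward (the symbol at $u$ is a function of the symbols at $u-(0,1)$, $u-(1,1)$, $u-(-1,1)$), so a square pattern of size $n$ is determined by its restriction to a U-shaped region of $3n-2$ cells, giving at most $K^{n}$ choices per layer. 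Multiplying yields $C^{\widehat{X}}(n)=O(n^2K'^n)$ for some constant $K'$, which is exactly inequality (\ref{estimate_2}). Your parametrisation idea (offsets plus boundary data, with determinism filling in the rest) is sound, but the boundary data has linear size and exponentially many admissible values, so it delivers an exponential bound --- no more, and no more is needed.
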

In the third and last step of the construction we enlarge the alphabet $\widehat{\mathcal{A}}$ by duplicating randomly the symbol $0$. Let
\begin{gather*}
\widetilde{\widetilde{\mathcal{A}}} = \{0',0'',1,2\} \quad\mbox{and}\quad \mathcal{A} = \widetilde{\widetilde{\mathcal{A}}} \times \mathcal{B}.
\end{gather*}
Let $\gamma \colon \mathcal{A} \to \widehat{\mathcal{A}}$ be the map that collapses $0'$ and $0''$ to $0$, $\Gamma \colon \Sigma^2(\mathcal{A}) \to \Sigma^2(\widehat{\mathcal{A}})$ be the corresponding 1-block factor map defined component-wise and extended to patterns, and
\begin{gather}\label{equation_1}
\mathcal{F} := \big\{ p \in \mathcal{A}^{\llbracket 1,D \rrbracket^2} : \Gamma(p) \in \widehat{\mathcal{F}} \big\}, \quad X := \Sigma^2(\mathcal{A},\mathcal{F}) = \big\{ x \in \Sigma^2(\mathcal{A}) : \Gamma(x) \in \widehat X \big\}.
\end{gather}
The subshift $X$ will be called thereafter the {\it duplicating SFT}. The composition maps $\widehat\pi \circ \gamma$ and $\widehat{\Pi} \circ \Gamma$ are denoted by
\begin{gather}\label{pi_functions}
\pi := \widehat\pi \circ \gamma : \mathcal{A} \to \widetilde{\mathcal{A}} \quad\mbox{and}\quad  \Pi := \widehat{\Pi} \circ \Gamma : \Sigma^2(\mathcal{A}) \to \Sigma^2(\widetilde{\mathcal{A}}).
\end{gather}
Notice that $X$ is a SFT generated by the finite set of forbidden patterns $\mathcal{F}$. 

The finite-range potential $\varphi$ of Theorem $1.1$  responsible for the zero-temperature chaotic behavior phenomenon may now be defined.
\begin{definition} \label{Definition:ShortRangePotential}
Let $X = \Sigma^2(\mathcal{A},\mathcal{F})$ be the duplicating SFT. The finite-range potential $\varphi : \Sigma^2(\mathcal{A}) \to \mathbb{R}$ is the function
\[
\varphi(x) =
\begin{cases}
1 \ \mbox{ if } \ x|_{\llbracket 1,D \rrbracket^2} \in\mathcal{F}, \\
0 \ \mbox{ if } \ x|_{\llbracket 1,D \rrbracket^2} \not\in\mathcal{F}.
\end{cases} \mbox{ for every } x \in \Sigma^2(\mathcal{A}).
\]
\end{definition}
Notice that $X = \{ x \in \Sigma^2(\mathcal{A}) :  \varphi \circ \sigma^u(x) = 0 \mbox{ for every } u \in \mathbb{Z}^2 \}$. In particular, the ergodic minimizing value $\bar\varphi$ of $\varphi$ is zero and the Mather set is the support of the set of invariant probability measures supported by $X$.
\[
\bar\varphi = 0 \ \ \text{and} \ \  \text{\rm  Mather}(\varphi) \subseteq X.
\]
A consequence is that any weak${}^*$ accumulation point of $(\mu_{\beta\varphi})_{\beta\to+\infty}$ must be a measure supported in $X$. 

The finite-range potential is the characteristic function of a cylinder set. We recall several definitions.

\begin{definition}
Let $\mathcal{A}$ be a finite alphabet, $a\in\mathcal{A}$ be a symbol, $S \subseteq \mathbb{Z}^d$ be a subset, $p\in \mathcal{A}^S$ be a pattern of support $S$, and $P\subseteq \mathcal{A}^S$ be a subset of patterns.
\begin{enumerate}
\item The {\it cylinder generated by  $a$}, denoted by $[a]_0$, is the set of configurations 
\[
[a]_0 = \{ x \in \Sigma^d(\mathcal{A}) : x(0)=a \}.
\] 
\item The {\it cylinder generated by  $p$}, denoted by $[p]$, is the set of configurations
\[
[p] := \{x \in \Sigma^d(\mathcal{A}) : x|_S = p \}.
\]
\item The {\it cylinder generated by $P$}, denoted by $[P]$, is the set of configurations 
\[
[P] := \bigsqcup_{p\in P}[p].
\]
\end{enumerate}
\end{definition} 

The finite-range potential is thus the characteristic function of the cylinder of forbidden words $\mathcal{F}$ 
defined in (\ref{equation_1})
\[
\varphi= \mathds{1}_{[\mathcal{F}]} : \Sigma^2(\mathcal{A}) \to \mathbb{R}.
\]

Our next task consists in describing the construction of the intermediate subshifts $\widetilde X_k$, $\widetilde X_{k}^A$, $\widetilde X_{k}^B$. To this end, we shall introduce the following notations.

\begin{definition}
Let $\mathcal{A}$ be a finite alphabet, and $d\geq1$ be an integer.
\begin{enumerate}
\item A {\it dictionary of size $\ell$ in dimension $d$} is a subset $L$ of patterns of $\mathcal{A}^{\llbracket 1,\ell \rrbracket^d}$. 
\item The {\it concatenated subshift} of a dictionary $L$ of size $\ell$  is the subshift of the form
\begin{equation*}
\label{eq.concatenated-subshift}
\begin{array}{rcl}
\langle L \rangle & = & \dis \bigcup_{u \in \llbracket 1,\ell \rrbracket^d} \bigcap_{v \in \mathbb{Z}^d} \sigma^{-(u+v \ell)} [L], \\
& = & \dis \Big\{ x \in \Sigma^d(\mathcal{A}) : \exists u \in \llbracket 1, \ell \rrbracket^d, \ \forall v \in \mathbb{Z}^d, \ (\sigma^{u+\ell v}(x) )|_{\llbracket 1,\ell \rrbracket^d} \in L \Big\}. \\
\end{array}
\end{equation*}
\end{enumerate}
\end{definition}

We construct by induction two sequences of dictionaries in dimension $1$, $(\widetilde A_k)_{k\geq0}$ and $(\widetilde B_k)_{k\geq0}$ using the alphabets $\widetilde{\mathcal{A}}_1$ and $\widetilde{\mathcal{A}}_2$ respectively in the following way. We choose a sequence of integers $(N_k)_{k\geq0}$, with $N_k \geq 4$ and define by induction the size $\ell_k$ of the dictionaries $\widetilde A_k$ and $\widetilde B_k$ by $\ell_0=2$ and
\[
\ell_k = N_k \ell_{k-1}.
\]
We assume that each word of $\widetilde A_k$ (respectively $\widetilde B_k$) is the concatenation of $N_k$ words of $\widetilde A_{k-1}$ (respectively $\widetilde B_{k-1}$). We define the corresponding concatenated subshifts
\[
\widetilde X^A_k := \langle \widetilde A_k \rangle, \quad \widetilde X^B_k := \langle \widetilde B_k \rangle.
\]
We note $\widetilde L_k := \widetilde A_k \bigsqcup \widetilde B_k$ and  assume that the concatenation of two words of $\widetilde L_k$ is a subword of the concatenation of two words of $\widetilde L_{k+1}$. We define the  corresponding concatenated subshift
\[
\widetilde X_k := \langle \widetilde L_k \rangle.
\]

\begin{lemma} \label{lemma:OnedimensionalConcatenatedSubshift}
Let $\widetilde{\mathcal{A}}$ be a finite alphabet. Let $(N_k)_{k\geq0}$ be a  sequence of integers, $N_k\geq 4$, $(\ell_k)_{k\geq0}$ be a sequence defined inductively by $\ell_0=2$, $\ell_k = N_k \ell_{k-1}$, and $(\widetilde L_k)_{k\geq0}$ be a sequence of dictionaries of size $(\ell_k)_{k\geq0}$ in dimension $1$ over the alphabet $\widetilde{\mathcal{A}}$. We assume that, for every $k\geq0$, every word in $\widetilde L_{k}$ is the concatenation of $N_k$ words of $\widetilde L_{k-1}$, and that  the concatenation of two words of $\widetilde L_k$ is a subword of the concatenation of two words of $\widetilde L_{k+1}$. Let $\widetilde X := \bigcap_{k\geq0} \langle \widetilde L_k \rangle$. Then 
\begin{enumerate}
\item \label{Item:OnedimensionalConcatenatedSubshift_1} $\langle \widetilde L_{k+1} \rangle \subseteq \langle \widetilde L_{k} \rangle$ for every $k \geq 0$,
\item \label{Item:OnedimensionalConcatenatedSubshift_2} $\widetilde X = \Sigma^1(\mathcal{A},\widetilde{\mathcal{F}})$ where $\widetilde{\mathcal{F}} := \bigsqcup_{n\geq0} \widetilde{\mathcal{F}}_n$  and $\widetilde{\mathcal{F}}_n$ is the set of words of length $n$  that are not subwords of any concatenation of two words of $\widetilde L_{k}$ for some $\ell_k \geq n$,
\item \label{Item:OnedimensionalConcatenatedSubshift_3} for every $n\geq0$ and $\ell_k \geq n$, $\mathcal{L}(\widetilde X,n) = \mathcal{L}(\langle \widetilde L_k \rangle, n)$. (In other words, a subword of length $n$ of the concatenation of two words of $\widetilde L_k$ is globally $\widetilde{\mathcal{F}}$-admissible.)
\end{enumerate}
\end{lemma}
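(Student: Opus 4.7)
My plan is to prove the three items in order, relying on the two hierarchical hypotheses, a pigeonhole argument, and a compactness argument in $\Sigma^1(\widetilde{\mathcal{A}})$. A basic observation underlying everything is the following: for $n\leq \ell_k$, the length-$n$ subwords of configurations in $\langle \widetilde L_k\rangle$ are exactly the length-$n$ subwords of concatenations of two words of $\widetilde L_k$, because any length-$n$ window in a $\widetilde L_k$-tiled configuration sits inside two consecutive $\widetilde L_k$-blocks.

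For Item (\ref{Item:OnedimensionalConcatenatedSubshift_1}), given $x \in \langle \widetilde L_{k+1}\rangle$ with offset $u \in \llbracket 1,\ell_{k+1}\rrbracket$, I would write $u = u' + q\ell_k$ with $u' \in \llbracket 1,\ell_k\rrbracket$ and $0 \leq q < N_{k+1}$. Since each level-$(k{+}1)$ block decomposes into $N_{k+1}$ consecutive level-$k$ blocks and $\ell_{k+1} = N_{k+1}\ell_k$, the $\widetilde L_{k+1}$-tiling of $x$ with offset $u$ refines immediately into a $\widetilde L_k$-tiling with offset $u'$, so $x \in \langle \widetilde L_k\rangle$. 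This step is essentially bookkeeping.

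For Item (\ref{Item:OnedimensionalConcatenatedSubshift_2}), the forward inclusion $\widetilde X \subseteq \Sigma^1(\widetilde{\mathcal{A}},\widetilde{\mathcal{F}})$ is immediate from the basic observation applied at any $k$ with $\ell_k \geq n$. For the converse, let $x \in \Sigma^1(\widetilde{\mathcal{A}},\widetilde{\mathcal{F}})$ and fix $k \geq 0$. For each large $N$, the window $x|_{\llbracket -N,N\rrbracket}$ is not in $\widetilde{\mathcal{F}}_{2N+1}$, hence embeds inside some concatenation of two $\widetilde L_{k_N}$-words with $\ell_{k_N} \geq 2N+1$; iterating the second hypothesis I may assume $k_N \geq k$. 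Refining that concatenation to $\widetilde L_k$-blocks induces, inside the window, a $\widetilde L_k$-tiling with some offset $u_N \in \llbracket 1,\ell_k\rrbracket$. Pigeonhole on this finite set produces a value $u$ attained by infinitely many $u_N$; along this subsequence, each fixed level-$k$ block in the global $u$-tiling of $x$ is eventually engulfed by the growing window and certified to lie in $\widetilde L_k$, giving $x \in \langle \widetilde L_k\rangle$. I expect this to be the main technical step, since the offsets $u_N$ a priori depend on $N$, and it is precisely the second hypothesis that makes the finite window embeddings cohere well enough for pigeonhole to succeed.

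For Item (\ref{Item:OnedimensionalConcatenatedSubshift_3}), one inclusion is trivial from $\widetilde X \subseteq \langle \widetilde L_k\rangle$. For the converse, given $w \in \mathcal{L}(\langle \widetilde L_k\rangle,n)$ with $\ell_k \geq n$, the basic observation shows that $w$ appears in a concatenation of two $\widetilde L_k$-words, and iterating the second hypothesis gives, for every $k' \geq k$, a concatenation $w_1^{(k')} w_2^{(k')}$ of two $\widetilde L_{k'}$-words containing $w$. The $2\ell_{k'}$-periodic configuration $y_{k'}$ built by repeating this pair, aligned so that $w$ sits at positions $\llbracket 1,n\rrbracket$, lies in $\langle \widetilde L_{k'}\rangle$ and, by Item (\ref{Item:OnedimensionalConcatenatedSubshift_1}), in $\langle \widetilde L_j\rangle$ for every $j \leq k'$. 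A standard compactness extraction in $\Sigma^1(\widetilde{\mathcal{A}})$ then yields a limit $y$ still containing $w$ at positions $\llbracket 1,n\rrbracket$ and lying in $\bigcap_j \langle \widetilde L_j\rangle = \widetilde X$, because each $\langle \widetilde L_j\rangle$ is closed and eventually contains every term of the extracted subsequence.
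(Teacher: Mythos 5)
Your proposal is correct, and while items (\ref{Item:OnedimensionalConcatenatedSubshift_1}) and (\ref{Item:OnedimensionalConcatenatedSubshift_2}) essentially parallel the paper, item (\ref{Item:OnedimensionalConcatenatedSubshift_3}) takes a genuinely different and simpler route. For item (\ref{Item:OnedimensionalConcatenatedSubshift_2}) the paper builds, for each growing centered window $I_j$, an auxiliary configuration $y_j\in\langle\widetilde L_k\rangle$ with $y_j|_{I_j}=x|_{I_j}$ and concludes by compactness and closedness of $\langle\widetilde L_k\rangle$ that the accumulation point (which is forced to be $x$) lies in $\langle\widetilde L_k\rangle$; your pigeonhole on the phase $u_N\bmod\ell_k$ of the induced block decomposition is exactly the consistency extraction hidden inside that compactness step, so the two arguments differ only in packaging (and your remark that $k_N\geq k$ eventually is even automatic, since $\ell_{k_N}\geq 2N+1$ forces $k_N\to+\infty$). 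For item (\ref{Item:OnedimensionalConcatenatedSubshift_3}) the paper proceeds quite differently: it constructs by induction a nested sequence of patterns $w_m$, each a concatenation of two words of $\widetilde L_m$, whose supports $K_m$ are made to extend strictly to the right at even steps and to the left at odd steps so that $\bigcup_m K_m=\ZZ$, and then realizes the resulting configuration as a point of $\widetilde X$ via extensions $y_m\in\langle\widetilde L_m\rangle$ and an accumulation-point argument. You instead embed $w$ into a two-word concatenation of $\widetilde L_{k'}$ for every $k'\geq k$, take the $2\ell_{k'}$-periodic configuration repeating that pair --- which is legitimately a point of $\langle\widetilde L_{k'}\rangle$, hence of $\langle\widetilde L_j\rangle$ for all $j\leq k'$ by item (\ref{Item:OnedimensionalConcatenatedSubshift_1}) --- and extract a subsequential limit, which lies in every closed set $\langle\widetilde L_j\rangle$ and still carries $w$ on $\llbracket 1,n\rrbracket$. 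Your route buys a shorter proof that avoids the alternating left/right bookkeeping needed to exhaust $\ZZ$; the paper's route buys an explicit limit configuration in which the hierarchical nesting is visible, but nothing later in the paper actually requires that explicitness, so your argument is a perfectly adequate substitute.
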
 

The previous lemma tells us that  $\widetilde X = \bigcap_{k\geq0}\widetilde X_k$  is generated by the set of forbidden words $\widetilde{\mathcal{F}} := \bigsqcup_{n\geq0} \widetilde{\mathcal{F}}_n$ where $\widetilde{\mathcal{F}}_n$ is the set of words of length $n$  that  are not subwords of the concatenation of two words $w$ and $w'$ taken in $\widetilde A_k \sqcup \widetilde B_k$. A direct consequence of this is that the reconstruction function of $\widetilde X$ satisfies
\[
 \ R^{\widetilde X}(n) \leq n \mbox{ for every }n \geq1.
\]
Later on, we will choose a suitable Turing machine $\widetilde{\mathbb{M}}$ which enumerates $\widetilde{\mathcal{F}}$ in such a way that the hypotheses \ref{item:ReconstructionFunctionEstimate_1} and \ref{item:ReconstructionFunctionEstimate_3} of proposition \ref{proposition:ReconstructionFunctionEstimate} are satisfied.

Let $\widetilde{\widetilde X}_k, \widetilde{\widetilde X}_k^A, \widetilde{\widetilde X}^B_k$ be the corresponding vertically aligned subshifts
\begin{gather*}
\widetilde{\widetilde X}_k := \{ \widetilde x \in \Sigma^2(\widetilde{\mathcal{A}}) :  \forall\, v \in \mathbb{Z}, \ (\widetilde x_{(u,v)})_{u \in\mathbb{Z}}) = (\widetilde x_{(u,0)})_{u \in\mathbb{Z}} \in \langle\widetilde L_k \rangle \big\}, \\
\widetilde{\widetilde X}_{k}^A := \{ \widetilde x \in  \widetilde{\widetilde X}_k : \left(\widetilde x_{(u,0)}\right)_{u \in \mathbb{Z}} \in  \langle \widetilde A_k \rangle \}, \quad \widetilde{\widetilde X}_{k}^B := \{\widetilde x \in  \widetilde{\widetilde X}_k : \left(\widetilde x_{(u,0)}\right)_{u \in \mathbb{Z}} \in \langle \widetilde B_k \rangle \}.
\end{gather*}
Then Lemma \ref{lemma:OnedimensionalConcatenatedSubshift} implies that
\[
 \mathcal{L}(\widetilde{\widetilde X},n) = \mathcal{L}(\widetilde{\widetilde X}_k,n) \mbox{ for every } n\geq1 \mbox{ and } \ell_k \geq n.
\]
By the simulation theorem, as $\Pi(X) = \widetilde{\widetilde X}$ ($\Pi$ is defined in Equation (\ref{pi_functions})), we obtain
\[
\Pi(\mathcal{L}(X,n)) = \mathcal{L}(\widetilde{\widetilde X},n) \mbox{ for every } n\geq 1.
\]
\begin{definition}
We denote for $k\geq 0$ by $L_k$, $A_k$ and $B_k$ the dictionaries of size $\ell_k$ given by,
\begin{enumerate}
\item $L_k := \mathcal{L}(X,\ell_k) \subseteq \mathcal{A}^{\llbracket 1,\ell_k \rrbracket^2}$,
\item $A_k := \big\{ p \in L_k : \Pi(p) \in \mathcal{L}(\widetilde{\widetilde X}_k^A,\ell_k) \big\}$,
\item $B_k := \big\{ p \in L_k : \Pi(p) \in \mathcal{L}(\widetilde{\widetilde X}_k^B,\ell_k) \big\}$.
\end{enumerate}
We also denote by $X_k,X_k^A, x_k^B$ the corresponding concatenated subshifts
\[
X_k := \langle L_k \rangle, \ \  X_k^A := \langle A_k \rangle, \ \ X_k^B := \langle B_k \rangle.
\]
\end{definition}

Notice that we obtain a similar structure as in the one-dimensional setting
\[
X = \bigcap_{k\geq0} X_k, \ \ X_{k+1} \subseteq X_k,  \ \  X_k^A \cup X_k^B \subseteq X_k, \ \ X_{k+1}^A \subseteq X_k^A, \ \ X_{k+1}^B \subseteq X_k^B.
\]
Contrary to what happens in the case of $\widetilde X_k^A,\widetilde X_k^B$, the notion of topological entropy will be sufficient to estimate the complexity of the intermediate subshifts $X_k^A$ and $X_k^B$. The entropy will be evaluated using the frequency of the symbol $0$ in the horizontal direction and the duplication of that symbol in the vertical direction. We recall several definitions. See for instance Walters~\cite{Walters1975} and Keller~\cite{Keller1998} for further references.

\begin{definition} \label{definition:EntropyDefinition}
Let $\mathcal{A}$ be a finite set and $X \subseteq \Sigma^d(\mathcal{A})$ be a subshift. 
\begin{enumerate}
\item \label{item:EntropyDefinition_1} The {\it topological entropy} of $X$ is the non negative real number
\[
h_{top}(X) := \lim_{n\to+\infty} \frac{1}{n^d}\log \Card(\mathcal{L}(X,n)).
\]
\item \label{item:EntropyDefinition_2} The {\it canonical generating partition of $\Sigma^d(\mathcal{A})$} is the partition
\begin{equation*}
\label{eq.G}
\mathcal{G}:=\{[a]_0:a\in\mathcal{A}\}.
\end{equation*}
\item \label{item:EntropyDefinition_3} The {\it common refinement} of two partitions $\mathcal{P}$ and $\mathcal{Q}$ of $\Sigma^d(\mathcal{A})$ is the partition
\[ \mathcal{P}\bigvee\mathcal{Q}:=\big\{P\cap Q: P\in\mathcal{P},\, Q\in\mathcal{Q}\, \big\}. \]
\item \label{item:EntropyDefinition_4} The {\it dynamical partition of support $S \subseteq \mathbb{Z}^2$} of a partition $\mathcal{P}$ is the partition
\[ 
\mathcal{P}^S := \bigvee_{u \in S} \sigma^{-u}(\mathcal{P}). 
\]
\item \label{item:EntropyDefinition_5} The {\it entropy of a finite partition} $\mathcal{P}$ with respect to an invariant measure $\mu$ is the quantity
\[ 
H(\mathcal{P},\mu):=\sum_{P\in\mathcal{P}}-\mu(P)\ln(\mu(P)). 
\]
\item \label{item:EntropyDefinition_6} The {\it relative entropy} of a finite partition $\mathcal{P}$ given a partition $\mathcal{Q}$ is the non negative real number
\[ 
H(\mathcal{P}\mid\mathcal{Q},\mu) = \int\!H(\mathcal{P},\mu_{x}^{\mathcal{Q}}) \, d\mu(x),
\]
where $\mu_x^{\mathcal{Q}}$ is the conditional measure with respect to $\mathcal{Q}$.
\item \label{item:EntropyDefinition_7} The Kolmogorov-Sinai entropy of an invariant measure $\mu$ is the quantity
\[
h(\mu) :=\sup\Big\{\lim_{n\to+\infty}\frac{1}{n^d}H\big(\mathcal{P}^{\llbracket 1,n\rrbracket^d},\mu\big):\mathcal{P}\ \text{is a finite partition of $\Sigma^d(\mathcal{A})$}\Big\}.
\]
A measure $\mu$ supported on $X$ satisfying $h(\mu) = h_{top}(X)$ is called a {\it measure of maximal entropy}.
\end{enumerate}
\end{definition}

The following proposition is standard (see the references above).

\begin{proposition}
Let $\mathcal{A}$ be a finite set and $X \subseteq \Sigma^d(\mathcal{A})$ be a subshift. 
\begin{enumerate}
\item There exists an ergodic invariant probability measure $\mu$ supported in $X$ such that
\[
h_{top}(X) = h(\mu).
\]
Such a measure is called {\it measure of maximal entropy}.
\item The {\it Kolmogorov-Sinai entropy} of an invariant probability measure $\mu$ satisfies
\[
h(\mu) = \lim_{n\to+\infty} \frac{1}{n^d} H(\mathcal{G}^{\llbracket 1,n\rrbracket^d},\mu),
\]
where $\mathcal{G}$ is the canonical generating partition of item \eqref{item:EntropyDefinition_2} in definition \ref{definition:EntropyDefinition}.
\end{enumerate}
\end{proposition}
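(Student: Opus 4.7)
The plan is to establish both items by reduction to well-known general facts about $\mathbb{Z}^d$-subshifts, to be found in Walters or Keller.

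For item (1), the first step is to recall that subshifts are \emph{expansive}: the canonical partition $\mathcal{G}$ of Definition \ref{definition:EntropyDefinition} already separates points of $\Sigma^d(\mathcal{A})$ when refined along the whole group, because knowing $x(u)$ for every $u \in \mathbb{Z}^d$ determines $x$. For expansive $\mathbb{Z}^d$-actions on compact metric spaces, the map $\mu \mapsto h(\mu)$ is upper semi-continuous on the weak$^*$-compact set $\mathcal{M}(X,\sigma)$ of shift-invariant probability measures supported on $X$. An upper semi-continuous function on a nonempty compact space attains its supremum, so there exists $\mu_0 \in \mathcal{M}(X,\sigma)$ with $h(\mu_0) = \sup_{\mu \in \mathcal{M}(X,\sigma)} h(\mu)$. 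By the variational principle, this supremum equals $h_{top}(X)$. To upgrade $\mu_0$ to an ergodic measure, use the ergodic decomposition $\mu_0 = \int \mu_\omega \, d\mathbb{P}(\omega)$ with $\mu_\omega$ ergodic $\mathbb{P}$-almost everywhere, together with affinity of entropy $h(\mu_0) = \int h(\mu_\omega)\, d\mathbb{P}(\omega)$; at least one $\mu_\omega$ has $h(\mu_\omega) \geq h(\mu_0) = h_{top}(X)$, and since it is still supported in $X$, the other inequality is automatic.

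For item (2), the starting point is the Kolmogorov--Sinai theorem for $\mathbb{Z}^d$-actions: if $\mathcal{P}$ is a finite measurable partition such that the $\sigma$-algebra generated by $\bigvee_{u \in \mathbb{Z}^d} \sigma^{-u}\mathcal{P}$ is (modulo $\mu$) the full Borel $\sigma$-algebra, then $h(\mu) = \lim_{n \to +\infty} \frac{1}{n^d} H(\mathcal{P}^{\llbracket 1,n \rrbracket^d}, \mu)$, and moreover the supremum in the definition of $h(\mu)$ is realized by $\mathcal{P}$. The canonical partition $\mathcal{G} = \{[a]_0 : a \in \mathcal{A}\}$ is generating in this sense, because $\sigma^{-u}([a]_0) = \{x : x(u) = a\}$, and the collection of all such sets as $u$ and $a$ vary is exactly the subbase of cylinders generating the Borel structure of $\Sigma^d(\mathcal{A})$. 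Applying Kolmogorov--Sinai with $\mathcal{P} = \mathcal{G}$ yields the claimed formula, where the fact that the limit over the Følner sequence $(\llbracket 1, n \rrbracket^d)_{n \geq 1}$ exists is a consequence of subadditivity of $n \mapsto H(\mathcal{G}^{\llbracket 1,n \rrbracket^d}, \mu)$ together with the Ornstein--Weiss lemma on $\mathbb{Z}^d$.

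The only mildly delicate point, which is not a real obstacle but deserves care, is the passage from $\mathbb{Z}$-action results stated in Walters to the $\mathbb{Z}^d$-action version needed here; this is handled uniformly through the formalism of amenable group actions with $(\llbracket 1, n\rrbracket^d)_{n \geq 1}$ as a Følner sequence, for which upper semi-continuity under expansiveness, the variational principle, and the Kolmogorov--Sinai generator theorem are all documented in Keller and in the standard literature on entropy of $\mathbb{Z}^d$-actions.
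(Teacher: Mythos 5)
Your proposal is correct and follows exactly the route the paper intends: the paper offers no proof of this proposition, stating only that it is standard and pointing to Walters and Keller, and your argument (expansiveness plus upper semi-continuity and the variational principle for item (1), ergodic decomposition with affinity of entropy to get ergodicity, and the Kolmogorov--Sinai generator theorem with subadditivity over the boxes $\llbracket 1,n\rrbracket^d$ for item (2)) is precisely the standard $\mathbb{Z}^d$-version of those textbook arguments. No gap to report.
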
 

Let $(\beta_k)_{k\geq0}$ be a sequence of inverse temperatures going to infinity. The heart of our proof is a double estimate of the pressure of $\beta_k \varphi$ that prescribes the statistics of the equilibrium measures. At low temperature an equilibrium measure tends to a minimizing measure that maximizes the topological entropy of the Mather set. As  $\text{\rm Mather}(\varphi) \subseteq X$ and $X$ is obtained as a decreasing sequence of $X_k$, each containing two distinguished subshifts $X_k^A$ and $X_k^B$, we obtain a zero-temperature chaotic behavior by choosing alternatively
\[
\begin{cases}
h_{top}(X_k^A) \ll h_{top}(X_k^B) &\text{for $k$ even,  $ k\to+\infty$}, \\
h_{top}(X_k^B) \ll h_{top}(X_k^A) &\text{for $k$ odd,  $k\to+\infty$}.
\end{cases}
\] 
Where  $a_k  \ll b_k$ for $k$ even,  $ k\to+\infty$ means that $\displaystyle\lim_{k\to+\infty} \frac{a_{2k}}{b_{2k}} = 0$, analogously for $k$ odd.
Because of the duplication process, the topological entropy can be estimated using the frequency of the symbol $0$. Let $f_k^A$ (respectively $f_k^B$)  be the largest frequency of the symbol $0$ in the words of $\widetilde A_k$ (respectively $\widetilde B_k$)
\[
f_k^A := \max_{p \in \widetilde A_k} f_k^A(p), \quad f_k^A(p) := \frac{1}{\ell_k} \Card \big(\{ i\in \llbracket 1, \ell_k \rrbracket :  p(i) = 0 \} \big).
\]
Our construction of $X_k^{A}$ (and $X_k^{B}$ ) will satisfy that
\[
\begin{cases}
f_k^A \ll f_k^B \ll 1&\text{for $k$ even,  $ k\to+\infty$}, \\
f_k^B \ll f_k^A \ll 1 &\text{for $k$ odd,  $ k\to+\infty$}.
\end{cases}
\] 

We now explain the double estimate for the pressure that are at the heart of the proof: item 3 of Lemma \ref{lemma:BoundFromBellow} and item 2 of Lemma \ref{lemma:EntropyEstimate_2}.

  The first estimate is standard
\[
P(\beta_k \varphi) = \sup_\mu \{ h(\mu) -\beta_k \mu([\mathcal{F}]) \geq h(\mu_k^B) -\beta_k \mu_k^B([\mathcal{F}]),
\]
where $\mu_k^B$ is an ergodic maximal entropy measure of the subshift $X_k^B$ and $k$ is even.

Using the fact that a  configuration in the support of $\mu_k^B$ is a tilling of square patterns of size $\ell_k$ that are globally $\mathcal{F}$-admissible, we obtain easily the following estimates.

\begin{lemma} \label{lemma:BoundFromBellow}
Let $k \geq0$.
\begin{enumerate}
\item \label{item:BoundFromBellow_1} For every ergodic probability measure $\mu$  satisfying $\Supp(\mu) \subseteq X_k^B$
\[
\mu([\mathcal{F}]) \leq \frac{2D}{\ell_k}.
\]
\item \label{item:BoundFromBellow_2} The topological entropy of the shift $X_k^B$ is bounded from below by 
\[
h_{top}\big(X_k^B\big) \geq \ln(2) f_k^B.
\]
\item  \label{item:BoundFromBellow_3} The pressure of $\beta_k \varphi$ is bounded from below by 
\[
P(\beta_k \varphi) \geq \ln(2) f_k^B  -2D \frac{\beta_k}{\ell_k}.
\]
\end{enumerate}
\end{lemma}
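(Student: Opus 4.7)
The plan is to prove the three items in order; items 2 and 3 will reduce to a counting argument and the variational principle, respectively, once item 1 is in hand.

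For item 1, I exploit that any $x \in X_k^B = \langle B_k \rangle$ is a tiling of $\ZZ^2$ by patterns of $B_k \subseteq L_k = \mathcal{L}(X,\ell_k)$ aligned on the $\ell_k$-grid with some offset $u_0 \in \llbracket 1,\ell_k \rrbracket^2$. Because each tile is globally $\mathcal{F}$-admissible and $\mathcal{F}$ consists of patterns of support $\llbracket 1,D \rrbracket^2$, no $D \times D$ window lying entirely inside a single tile can belong to $\mathcal{F}$. Hence $\mathds{1}_{[\mathcal{F}]}(\sigma^v(x))=1$ forces the window at $v$ to cross at least one tile boundary; such positions $v$ form, within each $\ell_k \times \ell_k$ block, a strip of relative area $1 - ((\ell_k - D + 1)/\ell_k)^2 \leq 2D/\ell_k$. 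Birkhoff's ergodic theorem for the $\ZZ^2$-action, applied to $\mathds{1}_{[\mathcal{F}]}$ and the ergodic measure $\mu$, then yields $\mu([\mathcal{F}]) \leq 2D/\ell_k$.

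For item 2, I produce many distinct patterns in $B_k$ via duplication of the symbol $0$. Fix $\widetilde{w} \in \widetilde{B}_k$ achieving the maximum $0$-frequency $f_k^B$, so that $\widetilde{w}$ contains $f_k^B \ell_k$ zeros. The $\ell_k \times \ell_k$ pattern $\widetilde{\widetilde{p}}$ whose rows are all equal to $\widetilde{w}$ belongs to $\mathcal{L}(\widetilde{\widetilde{X}}_k^B, \ell_k)$, and by Theorem \ref{theorem:AubrunSablikSimulation} it lifts to some $\widehat{p} \in \mathcal{L}(\widehat{X},\ell_k)$ with exactly $f_k^B \ell_k^2$ zero positions in its first coordinate. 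Since $X = \Gamma^{-1}(\widehat{X})$, one may independently replace each such zero by either $0'$ or $0''$ (keeping the $\mathcal{B}$-coordinate of $\widehat{p}$ unchanged), obtaining $2^{f_k^B \ell_k^2}$ distinct patterns in $L_k = \mathcal{L}(X,\ell_k)$, all projecting under $\Pi$ to $\widetilde{\widetilde{p}} \in \mathcal{L}(\widetilde{\widetilde{X}}_k^B,\ell_k)$, hence all lying in $B_k$. Thus $|B_k| \geq 2^{f_k^B \ell_k^2}$. Since $\langle B_k \rangle$ admits at least $|B_k|^{n^2}$ patterns of size $(n\ell_k)^2$ (choose a tile of $B_k$ independently at each of the $n^2$ grid positions), one obtains $h_{top}(X_k^B) \geq (\log |B_k|)/\ell_k^2 \geq \ln(2)\, f_k^B$.

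Item 3 is then immediate from the variational principle: taking $\mu_k^B$ to be an ergodic measure of maximal entropy on $X_k^B$ and recalling $\varphi = \mathds{1}_{[\mathcal{F}]}$, items 1 and 2 combine to give $P(\beta_k \varphi) \geq h(\mu_k^B) - \beta_k \mu_k^B([\mathcal{F}]) \geq \ln(2)\, f_k^B - 2D\beta_k/\ell_k$. I expect the main obstacle to be the bookkeeping in item 2, where one must simultaneously track the Aubrun-Sablik lift through $\widehat{\Pi}$, the duplication through $\Gamma$, and global admissibility in $X$; items 1 and 3 are essentially routine once this counting is in place.
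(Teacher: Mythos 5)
Your proposal is correct and follows essentially the same route as the paper: tile-boundary counting plus the $\ZZ^2$ Birkhoff theorem for item 1, duplication of the symbol $0$ on a vertically aligned lift of a maximal-frequency word of $\widetilde B_k$ for item 2, and the variational principle applied to a maximal-entropy measure of $X_k^B$ for item 3. The only step you leave implicit --- that $\widetilde w$ is globally admissible in $\widetilde X$ (not merely in $\langle\widetilde B_k\rangle$), so that the vertically aligned pattern genuinely lies in $\mathcal{L}(\widetilde{\widetilde X},\ell_k)$ and lifts through the simulation theorem --- is exactly what the paper supplies by invoking item \ref{Item:OnedimensionalConcatenatedSubshift_3} of Lemma \ref{lemma:OnedimensionalConcatenatedSubshift}.
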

A similar estimate is also valid for $X_k^A,f_k^A$, instead of $X_k^B,f_k^B$. The two parameters $\beta_k$ and $\ell_k$ will be chosen so that the following first constraint is valid
\begin{equation}
\begin{cases}
\frac{\beta_k}{\ell_k} \ll f_k^B, \  \text{for $k$ even,  $ k\to+\infty$},\\
\frac{\beta_k}{\ell_k} \ll f_k^A, \  \text{for $k$ odd,  $ k\to+\infty$}.
\end{cases}\tag{C1} \label{equation:Constraint_1}
\end{equation}

The second estimate is a bound from above on the pressure at $\beta_k\varphi$. In order to obtain it, we need to introduce a sequence of intermediate scales $(\ell'_k)_{k\geq0}$, two intermediate dictionaries $\widetilde A'_k$ and $\widetilde B'_k$, and assume that $\widetilde A_k$ and $\widetilde B_k$ are built over $\widetilde A'_k$ and $\widetilde B'_k$ in the following way.

\begin{definition} \label{Definition:IntermediateScales}
For every $k\geq0$ we define
\[
\ell'_k = N'_k \ell_{k-1},
\]
where $N'_k \geq 2$ is an integer and $N_k$ is a multiple of $N'_k$ with $N_k/N'_k\geq2$. Let $\widetilde A_k'$ (respectively $\widetilde B'_k$) be some intermediate dictionary over the alphabet $\widetilde{\mathcal{A}}$ of size $\ell'_k$
\[
\widetilde A_k', \widetilde B'_k \subseteq \widetilde{\mathcal{A}}^{\llbracket 1,\ell_k'\rrbracket},
\] 
satisfying the property that every word of $\widetilde A'_k$  (respectively $\widetilde B'_k$) is obtained by concatenating $N'_k$ words of $\widetilde A_{k-1}$ (respectively $N'_k$ words of $\widetilde B_{k-1}$). We define
\begin{equation}\label{equation_2}
    \widetilde L'_k := \widetilde A'_k \bigsqcup \widetilde B'_k.
\end{equation}

Assume also that each word of $\widetilde A_k$ (respectively $\widetilde B_k$) is a concatenation of $N_k/N'_k$ words of $\widetilde A'_k$ (respectively $\widetilde B'_k$). 
\end{definition}

We introduce the following notations.

\begin{enumerate}
\item $R'_k := 2 R^{\widehat X}(\ell'_k)+1$  be {\it the reconstruction length at the scale $\ell'_k$} (see Definition~\ref{definition:ReconstructionFunction} for the definition  of the reconstruction function $R^{\widehat X}$), 
\item $M'_k \subseteq \mathcal{A}^{\llbracket 1,R'_k \rrbracket^2}$ be the set of patterns of size $R'_k$ that are locally $\mathcal{F}$-admissible
\[
M'_k := \big\{ w\in\mathcal{A}^{\llbracket 1,R'_k\rrbracket^2}:\forall\,p \in \mathcal{F},\ \forall\,u\in\llbracket 0,R'_k-D\rrbracket^2,\ p \not\sqsubset\sigma^u(w) \big\},
\]
(the set $M'_k$ is called {\it the  reconstruction cylinder at scale $\ell'_k$}),
\item $T'_k := \big(\big\lfloor\frac{R'_k}{2}\big\rfloor -\ell'_k,\big\lfloor\frac{R'_k}{2}\big\rfloor -\ell'_k \big) \in \mathbb{Z}^2$ be a translation vector to the center of ${\llbracket 1,R'_k\rrbracket^2}$,
\item $Q'_k := T'_k+ \llbracket 1, 2\ell'_k \rrbracket^2 \subseteq \mathbb{Z}^2$ be the central block of indices for which for every $w \in M'_k$, $w|_{Q'_k}$ is globally $\mathcal{F}$-admissible.
\end{enumerate}

The following lemma shows that an ergodic equilibrium measure $\mu_{\beta_k}$ for $\beta_k\varphi$ at low temperature ($\beta_k \gg 1$) has the tendency to give a large mass to  sets of configurations that minimize $\varphi$, that is, to sets of configurations that are locally $\mathcal {F}$-admissible. Using the trivial estimate
\[
0 \leq P(\beta_k\varphi) = h(\mu_{\beta_k}) - \int\! \beta_k \varphi \, d\mu_{\beta_k} \leq h_{top}(\Sigma^2(\mathcal{A}) )- \beta_k \mu_{\beta_k}([\mathcal{F}]),
\]
one proves easily the following bound.

\begin{lemma} \label{lemma:sizeNeighborhoodSubshift}
For every $k$ and every ergodic equilibrium measure $\mu_{\beta_k}$,
\begin{equation*}
\label{eq.epsilon_k}
\dis \mu_{\beta_k}\left(\Sigma^2(\mathcal{A}) \setminus [M_k']\right) \leq \frac{(R_k')^2}{\beta_k}\ln( \Card(\mathcal{A})) =: \epsilon_k.
\end{equation*}
\end{lemma}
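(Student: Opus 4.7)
The plan is to proceed from the displayed inequality given just before the lemma. First I would justify that $P(\beta_k \varphi) \geq 0$: because $\varphi \geq 0$ and $\varphi \equiv 0$ on $X$, any invariant probability measure $\mu$ supported in $X$ contributes $h(\mu) - \beta_k \int \varphi \, d\mu = h(\mu) \geq 0$ to the supremum defining the pressure, so $P(\beta_k\varphi) \geq 0$. Second, the entropy of every invariant probability measure is bounded above by the topological entropy of the ambient full shift, which is $h_{top}(\Sigma^2(\mathcal{A})) = \ln(\Card(\mathcal{A}))$. Plugging these into the variational identity $P(\beta_k \varphi) = h(\mu_{\beta_k}) - \beta_k \mu_{\beta_k}([\mathcal{F}])$, which uses $\varphi = \mathds{1}_{[\mathcal{F}]}$, yields the key bound
\[
\beta_k \, \mu_{\beta_k}([\mathcal{F}]) \leq h(\mu_{\beta_k}) \leq \ln(\Card(\mathcal{A})),
\]
and therefore $\mu_{\beta_k}([\mathcal{F}]) \leq \ln(\Card(\mathcal{A}))/\beta_k$.

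Next I would translate the complement $\Sigma^2(\mathcal{A}) \setminus [M'_k]$ into a union of translates of $[\mathcal{F}]$. By the definition of $M'_k$, a configuration $x$ lies outside $[M'_k]$ if and only if its restriction to $\llbracket 1, R'_k \rrbracket^2$ contains some forbidden pattern $p \in \mathcal{F}$ at some admissible position; equivalently there exists $u \in \llbracket 0, R'_k - D \rrbracket^2$ such that $\sigma^u(x) \in [\mathcal{F}]$. Hence
\[
\Sigma^2(\mathcal{A}) \setminus [M'_k] \;\subseteq\; \bigcup_{u \in \llbracket 0, R'_k - D \rrbracket^2} \sigma^{-u}([\mathcal{F}]).
\]
Using the union bound together with the translation invariance of $\mu_{\beta_k}$ gives
\[
\mu_{\beta_k}(\Sigma^2(\mathcal{A}) \setminus [M'_k]) \;\leq\; (R'_k - D + 1)^2 \, \mu_{\beta_k}([\mathcal{F}]) \;\leq\; \frac{(R'_k)^2 \, \ln(\Card(\mathcal{A}))}{\beta_k},
\]
which is exactly $\epsilon_k$.

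There is no substantial obstacle here: the lemma rests on three elementary facts, namely the variational identity characterizing equilibrium measures, the trivial upper bound of $h(\mu_{\beta_k})$ by the topological entropy of the full shift, and a union bound exploiting shift invariance. The only small care needed is in carefully unpacking the definition of the reconstruction cylinder $M'_k$ to see that its complement is exactly the $(R'_k-D+1)^2$-fold union of translated cylinders of forbidden patterns. The lemma itself plays a purely preparatory role, providing the control on the mass concentrated on locally $\mathcal{F}$-admissible patterns that will be combined with the lower bound from Lemma \ref{lemma:BoundFromBellow} to extract the dichotomy between $X_k^A$ and $X_k^B$.
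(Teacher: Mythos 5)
Your proof is correct and follows essentially the same route as the paper: nonnegativity of the pressure (via an invariant measure supported in $X$, where $\varphi$ vanishes), the trivial entropy bound $h(\mu_{\beta_k}) \leq \ln(\Card(\mathcal{A}))$ to get $\mu_{\beta_k}([\mathcal{F}]) \leq \ln(\Card(\mathcal{A}))/\beta_k$, and then the inclusion $\Sigma^2(\mathcal{A})\setminus[M'_k] \subseteq \bigcup_{u\in\llbracket 0,R'_k-D\rrbracket^2}\sigma^{-u}([\mathcal{F}])$ with a union bound and shift invariance. Your version is in fact marginally more explicit (the factor $(R'_k-D+1)^2 \leq (R'_k)^2$), but there is no substantive difference.
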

The parameter $\epsilon_k$ is supposed to be small at low temperature.  Also,  $\epsilon_k$  must actually be negligible compared to the frequency $f_{k-1}^B$. More precisely
\begin{equation*}
\epsilon_k \ll f_{k-1}^B,  \\ \  \text{for $k$ even,  $ k\to+\infty$},
\end{equation*} 
and using $ H(\epsilon_k) := -\epsilon_k \log(\epsilon_k) -(1-\epsilon_k) \log(1-\epsilon_k) $, we also need  

\begin{equation*}
H(\epsilon_k)  \ll f_{k-1}^B  \\ \  \text{for $k$ even,  $ k\to+\infty$}.
\end{equation*} 
We simplify these two constraints by using $-\epsilon_k \log(\epsilon_k) \ll \sqrt{\epsilon_k}$, and thus, by imposing
\begin{equation}
\begin{cases}
\frac{(R'_k)^2}{\beta_k} \ll (f_{k-1}^B)^2, \  \text{for $k$ even,  $ k\to+\infty$}, \\
\frac{(R'_k)^2}{\beta_k} \ll (f_{k-1}^A)^2, \  \text{for $k$ odd,  $ k\to+\infty$}.
\end{cases}\tag{C2} \label{equation:Constraint_2}
\end{equation}

A typical configuration for $\mu_{\beta_k}$ sees the reconstruction cylinder with probability $1-\epsilon_k$.  

\begin{definition} Consider the space $\Sigma^2(\mathcal{A})$. We define
\begin{enumerate}
\item  The {\it canonical base partition} is given by
\begin{gather*}
{\widetilde{\mathcal{G}}} := \{{\widetilde G_0},{\widetilde G_1}, {\widetilde G_2} \}, \ \
\widetilde G_{\widetilde a}  := \{ x \in \Sigma^2(\mathcal{A}) : \pi(x(0)) = \widetilde a \},  \ \forall\, \widetilde a \in \widetilde{\mathcal{A}}.
\end{gather*}
\item The {\it reconstruction partition at scale $\ell'_k$} is the partition $\mathcal{U}_k$ given by
\[
\mathcal{U}_k := \{ [M'_k], \Sigma^2(\mathcal{A}) \setminus [M'_k] \}.
\]
\end{enumerate}
\end{definition}
Notice that $\widetilde{\mathcal{G}}$ is a partition of $\Sigma^2(\mathcal{A})$ and not of $\Sigma^1(\widetilde{\mathcal{A}})$. The symbols coming from the simulation theorem are hidden. The only symbols that remain visible are those from the one-dimensional subshift.

An upper bound on the pressure of $\beta_k \varphi$ is given by the entropy of the equilibrium measure 
\[
P(\beta_k \varphi) \leq  h(\mu_{\beta_k}).
\]

We decompose the computation of the entropy of $\mu_{\beta_k}$ into 3 terms using a standard identity on relative entropies
\begin{gather*}
h(\mu_{\beta_k}) = h_{rel}(\mu_{\beta_k}) + \limsup_{n\to+\infty} \frac{1}{n^2} \Big[ H \Big( {\widetilde{\mathcal{G}}}^{\llbracket 1,n \rrbracket^2} \mid \mathcal{U}_k^{\llbracket 0,n-R_k' \rrbracket^2}, \mu_{\beta_k} \Big)  + H \Big( \mathcal{U}_k^{\llbracket 0,n-R_k' \rrbracket^2}, \mu_{\beta_k} \Big) \Big].
\end{gather*}
The first term $h_{rel}(\mu_{\beta_k})$ is the {\it relative entropy of $\mu_{\beta_k}$ at scale $\ell'_k$} 
\begin{gather*}
 h_{rel}(\mu_{\beta_k}):=\lim_{n\to+\infty}\frac{1}{n^2}H\Big(\,\mathcal{G}^{\llbracket 1,n\rrbracket^2}\mid {\widetilde{\mathcal{G}}}^{\llbracket 1,n\rrbracket^2} \bigvee \mathcal{U}_k^{\llbracket 0,n-R'_k \rrbracket^2},\mu_{\beta_k}\,\Big).
\end{gather*}
The term $h_{rel}(\mu_{\beta_k})$ is dominant; it computes the entropy of the canonical generating partition $\mathcal{G}$ in $\Sigma^2(\mathcal{A})$, see Definition \ref{definition:EntropyDefinition}, knowing the fact that the $\widetilde{\mathcal{A}}$-symbols are fixed and that large patterns in $\mathcal{A}^{\llbracket 1,n \rrbracket^2}$ are tiled by almost non overlapping locally admissible patterns of size $R'_k$.  The second term computes the entropy of the canonical base partition knowing the fact that most the time a configuration is vertically aligned. That term is negligible. The last term computes the entropy of a two set partition where one of the sets, the reconstruction cylinder, has large measure $\mu_{\beta_k}([M'_k]) > 1-\epsilon_k$. That term is again negligible.

We obtain easily the following estimates.

\begin{lemma} \label{lemma:EntropyEstimate_1}
For every $k$ and every equilibrium measure $\mu_{\beta_k}$,
\begin{enumerate}
\item $\displaystyle \limsup_{n\to+\infty} \frac{1}{n^2}H \big( \mathcal{U}_k^{\llbracket 0,n-R_k' \rrbracket^2}, \mu_{\beta_k} \big) \leq H(\epsilon_k)$,
\item $\displaystyle \limsup_{n\to+\infty} \frac{1}{n^2} H\left({\widetilde{\mathcal{G}}}^{\llbracket 1,n\rrbracket^2}\mid\mathcal{U}_k^{\llbracket 0,n-R'_k\rrbracket^2},\mu_{\beta_k}\right) \leq \left(\frac{8}{R_k'}+ \epsilon_k\right) \ln(\Card(\widetilde{\mathcal{A}}))$.
\end{enumerate}
\end{lemma}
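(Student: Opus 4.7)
For item 1, I rely on subadditivity of entropy and shift-invariance. Writing the dynamical partition as a join of translates and using that $\mu_{\beta_k}$ is shift-invariant,
\[
H\bigl(\mathcal{U}_k^{\llbracket 0,n-R'_k\rrbracket^2},\mu_{\beta_k}\bigr)\le\sum_{v\in\llbracket 0,n-R'_k\rrbracket^2}H\bigl(\sigma^{-v}\mathcal{U}_k,\mu_{\beta_k}\bigr)=(n-R'_k+1)^2\,H(\mathcal{U}_k,\mu_{\beta_k}).
\]
Since $\mathcal{U}_k$ has exactly two atoms and $\mu_{\beta_k}([M'_k]^c)\le\epsilon_k$ by Lemma~\ref{lemma:sizeNeighborhoodSubshift}, the binary entropy $H(\mathcal{U}_k,\mu_{\beta_k})$ is bounded by $H(\epsilon_k)$ by monotonicity on $[0,1/2]$ (valid eventually, as $\epsilon_k\to 0$). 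Dividing by $n^2$ and letting $n\to\infty$ yields item~1.

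For item 2, the crucial structural input is the reconstruction property: whenever $\sigma^v(x)\in[M'_k]$, the pattern $x|_{v+Q'_k}$ is globally $\mathcal{F}$-admissible by the very definition of the central block $Q'_k$, so $\Pi(x|_{v+Q'_k})\in\mathcal{L}(\widetilde{\widetilde X},2\ell'_k)$. Because $\widetilde{\widetilde X}$ is vertically aligned, this forces $\pi\circ x$ to be constant along every column of $v+Q'_k$. Fix an atom $Q$ of $\mathcal{Q}:=\mathcal{U}_k^{\llbracket 0,n-R'_k\rrbracket^2}$, let $J(Q)$ be the set of $v$'s labelled bad by $Q$, and select horizontal reference rows spaced at vertical distance proportional to $R'_k$ (giving $\lceil 4n/R'_k\rceil$ strips). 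By chaining overlapping good blocks $v+Q'_k$ with $v\notin J(Q)$ along each column, vertical alignment propagates from the reference row of a strip to every site of that strip except for a ``bad'' subset $\mathrm{bad}(Q)\subseteq\llbracket 1,n\rrbracket^2$ consisting of sites not reached by any such chain.

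Applying the elementary bound $H(\mathcal{P}\mid\mathcal{Q},\mu)\le\sum_Q\mu(Q)\log\#\{P\in\mathcal{P}:P\cap Q\ne\emptyset\}$, I bound the conditional entropy by
\[
H\bigl(\widetilde{\mathcal{G}}^{\llbracket 1,n\rrbracket^2}\mid\mathcal{Q},\mu_{\beta_k}\bigr)\le\bigl(n\lceil 4n/R'_k\rceil+\mathbb{E}_{\mu_{\beta_k}}|\mathrm{bad}(Q)|\bigr)\log\Card(\widetilde{\mathcal{A}}),
\]
the first term accounting for specifying the $\widetilde{\mathcal{A}}$-values on the reference rows and the second for specifying them on the bad sites. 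For a single site $(i,j)$, the event ``$(i,j)\in\mathrm{bad}(Q)$'' is contained in $\{\sigma^{v_0}(x)\in[M'_k]^c\}$ for any fixed $v_0$ with $(i,j)\in v_0+Q'_k$, since in particular no good block covers $(i,j)$. Shift-invariance of $\mu_{\beta_k}$ and Lemma~\ref{lemma:sizeNeighborhoodSubshift} thus give $\mathbb{E}_{\mu_{\beta_k}}|\mathrm{bad}(Q)|\le\epsilon_k n^2$. Dividing by $n^2$, sending $n\to\infty$, and absorbing the ceilings and boundary overhead into the constant $8$ yields item~2.

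The main technical subtlety lies in the chaining step: a single good block aligns columns only over a vertical range of $2\ell'_k$, while Proposition~\ref{proposition:ReconstructionFunctionEstimate} permits $R'_k/\ell'_k$ to be large. The delicate point is that, even so, the bad-site event is contained in a \emph{single}-block bad event (not a chain-breaking event), which keeps the union-bound estimate at $\epsilon_k$ per site rather than $(R'_k/\ell'_k)\,\epsilon_k$, and it is this inclusion that makes the constant in front of $\epsilon_k$ dimensionless.
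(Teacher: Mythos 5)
Item 1 is fine and is the same subadditivity argument as the paper's. The gap is in item 2, and it is exactly at the point you flag as the "main technical subtlety": your two uses of the word \emph{bad} are incompatible. For the counting step you need $\mathrm{bad}(Q)$ to mean ``not connected to a reference row by a chain of overlapping good blocks'', because a single good block only forces vertical alignment over a height $2\ell'_k$, while your reference rows are spaced $\sim R'_k$ apart and $R'_k/\ell'_k$ may be huge (the reconstruction function is allowed to grow exponentially). For the probabilistic step you need $\mathrm{bad}(Q)$ to mean ``not covered by any good block'', since only then is the per-site bad event contained in a single-block event $\{\sigma^{v_0}(x)\notin[M'_k]\}$ of measure $\le\epsilon_k$. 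The asserted inclusion for the chaining definition is false: a site can sit inside a good block that is isolated, far from the reference row, with all intermediate blocks bad, so being unchained is genuinely a chain-breaking event and the union bound degrades by a factor of order $R'_k/\ell'_k$. Conversely, with the weaker (covered-by-a-good-block) definition, the reference rows no longer determine the symbols at good-but-unchained sites, so the count $n\lceil 4n/R'_k\rceil+|\mathrm{bad}(Q)|$ does not bound the number of compatible $\widetilde{\mathcal{G}}$-patterns. With only the information you use --- global admissibility of the central $Q'_k$-window --- the best one can get by clustering is a bound of order $\frac{1}{\ell'_k}\ln\Card(\widetilde{\mathcal{A}})$, not the stated $\frac{8}{R'_k}\ln\Card(\widetilde{\mathcal{A}})$.

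The paper's proof avoids chaining altogether by using a stronger structural fact that you did not invoke: vertical alignment of the $\widetilde{\mathcal{A}}$-symbols is itself one of the finite-type rules of the Aubrun--Sablik SFT (its first layer), so any \emph{locally} $\mathcal{F}$-admissible pattern of size $R'_k$, i.e.\ any element of $M'_k$, already has its $\widetilde{\mathcal{A}}$-symbols constant along columns on the \emph{whole} $R'_k\times R'_k$ block, not merely on the central $2\ell'_k$-window. Given this, on the Birkhoff-typical set where at least $n^2(1-\epsilon'_k)$ translates are good, one extracts a maximal $\tfrac{1}{2}R'_k$-separated subfamily $J$ of good translates ($\Card(J)\le 4n^2/R_k'^2$); the good blocks cluster into connected sets $K_u$, $u\in J$, each of horizontal width $<2R'_k$ on which the symbols are determined by at most $2R'_k$ column values, giving at most $\Card(\widetilde{\mathcal{A}})^{8n^2/R'_k}$ choices in total, while the $\le\epsilon'_k n^2$ uncovered sites contribute the $\epsilon_k$ term. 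If you add the observation about alignment on the full locally admissible block, your reference-row scaffolding becomes unnecessary and the stated constant follows; without it, the estimate as written does not close.
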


The hardest part of the proof is to bound from above the relative entropy of $\mu_{\beta_k}$ at scale $\ell'_k$ with respect to the complexity of 1-dimensional globally $\widetilde{\mathcal{F}}$-admissible words of length $\ell'_k$ (a square tile of size $R'_k$ gives in its center a square tile  of size $\ell'_k$ that is globally admissible). Item \eqref{item:EntropyEstimate_2_1} in the following lemma shows that the frequency of the symbol $0 (0^{'}\wedge0^{''})$ is dominated by the one of the symbol $2$ if $k$ is even. Item \eqref{item:EntropyEstimate_2_2} is the most difficult estimate to prove. The computation depends on a particular choice of the language $\widetilde L^{'}_k$ (see equation \ref{equation_2} ) with respect to $\widetilde L_{k-1}$. If $k$ is even, the frequency of the symbol $0$ in words in $\widetilde B'_k$ coincides with the frequency $f_{k-1}^B$, the frequency of $0$ in $\widetilde A'_k$ is negligible (of the form $f^A_{k-1}/N'_k$). If $\mu_{\beta_k}$ gives some positive mass to ${\widetilde G_1}$, then the proportion of the space of configurations that can be covered by words in $B'_k$ (words containing only the symbols $0$ and $2$) is thus less than $\mu_{\beta_k} \big(\Sigma^2(\mathcal{A}) \setminus {\widetilde G_1} \big)$. In particular Item \eqref{item:EntropyEstimate_2_3} gives us the means to show that the support of the measure $\mu_{\beta_k}$ is in ${\widetilde G_2}$ for $k$ even and in ${\widetilde G_1}$ for $k$ odd.

\begin{lemma} \label{lemma:EntropyEstimate_2}
For every $k$ and every equilibrium measure $\mu_{\beta_k}$,
\begin{enumerate}
\item \label{item:EntropyEstimate_2_1} $\displaystyle \mu_{\beta_k}(\widetilde G_0) \leq  \frac{2}{N'_k}  f_{k-1}^A + (1-N_{k-1}^{-1})^{-1} f^B_{k-1} + \epsilon_k$,
\item \label{item:EntropyEstimate_2_2} if $k$ is even, then
\begin{multline*}
h_{rel}(\mu_{\beta_k})  \leq \Big( \frac{2}{N'_k}  f_{k-1}^A + \big(1-N_{k-1}^{-1} \big)^{-1} \big(\mu_{\beta_k} \big(\Sigma^2(\mathcal{A}) \setminus {\widetilde G_1} \big) + \epsilon_k \big) f^B_{k-1}\Big) \ln(2) \\
+ \frac{1}{\ell'_k} \ln(\Card(\widetilde{\mathcal{A}})) +  \frac{\ln(C_k')}{\ell'_k{}^2} + \epsilon_k \ln (2\Card(\widehat{\mathcal{A}})),
\end{multline*}
\quad$\displaystyle P(\beta_k \varphi) \leq h_{rel}(\mu_{\beta_k}) + \left(\frac{8}{R_k'}+ \epsilon_k\right) \ln(\Card(\widetilde{\mathcal{A}})) + H(\epsilon_k)$.
\item \label{item:EntropyEstimate_2_3} if $k$ is odd, then the previous estimate is valid with $f_{k-1}^A$ and $f_{k-1}^B$ permuted and $ {\widetilde G_1}$ replaced by $ {\widetilde G_2}$.
\end{enumerate}
\end{lemma}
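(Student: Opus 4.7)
The proof combines three ingredients: the tile decomposition at scale $\ell'_k$ afforded by the reconstruction cylinder $[M'_k]$, the Aubrun--Sablik complexity estimate of Proposition~\ref{proposition:ComplexityFunctionEstimate}, and the sparseness of the symbol $0$ in the dictionaries $\widetilde A'_k$ and $\widetilde B'_k$. I use throughout that $\mu_{\beta_k}\big(\Sigma^2(\mathcal{A})\setminus[M'_k]\big) \leq \epsilon_k$ by Lemma~\ref{lemma:sizeNeighborhoodSubshift}, and that on $[M'_k]$ the central pattern $x|_{Q'_k}$ is globally $\mathcal{F}$-admissible, so $\Pi(x)|_{Q'_k}$ is a $2\ell'_k$-subword of a concatenation of two words of $\widetilde L'_k$.

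Item~\ref{item:EntropyEstimate_2_1} follows from shift-invariance: $\mu_{\beta_k}(\widetilde G_0)$ equals the asymptotic density of indices $u$ with $\pi(x(u))=0$. By construction, $\widetilde A'_k$-aligned tiles have $0$-frequency at most $(2/N'_k)f^A_{k-1}$ (only a bounded number of the $N'_k$ sub-blocks of $\widetilde A_{k-1}$ making up an $\widetilde A'_k$-word contain zeros, and the factor $2$ absorbs the possibility that the projected window straddles two $\widetilde A'_k$-tiles), while $\widetilde B'_k$-aligned tiles have $0$-frequency at most $(1-N_{k-1}^{-1})^{-1}f^B_{k-1}$, the correction factor arising from the analogous straddling bookkeeping at the $\widetilde B_{k-1}$-scale. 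Adding the trivial error $\epsilon_k$ from positions outside $[M'_k]$ yields the stated bound.

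For item~\ref{item:EntropyEstimate_2_2}, the core is a tile-counting argument for $h_{rel}(\mu_{\beta_k})$. Conditioned on $\Pi(x)$ and on being in $[M'_k]$, the number of $\mathcal{A}$-completions of a single $\ell'_k$-tile is at most
\[
C^{\widehat X}(\ell'_k)\cdot 2^{\#\{0\text{'s in tile}\}},
\]
the first factor from Proposition~\ref{proposition:ComplexityFunctionEstimate} bounding SFT preimages of globally admissible $\widetilde{\mathcal{A}}$-patterns, and the second from the duplication $0\mapsto\{0',0''\}$. Tiling $\llbracket 1,n\rrbracket^2$ by $\ell'_k$-boxes, taking logarithms and passing to densities gives the four summands on the right-hand side: the average $0$-density times $\ln 2$, the complexity term $\ln(C'_k)/\ell'_k{}^2$, the boundary term $\ln(\Card(\widetilde{\mathcal{A}}))/\ell'_k$, and the $[M'_k]^c$ error $\epsilon_k\ln(2\Card(\widehat{\mathcal{A}}))$. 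The sharpening over item~\ref{item:EntropyEstimate_2_1} comes from noting that $\widetilde A'_k$-tiles project into $\widetilde{\mathcal{A}}_1=\{0,1\}$ and $\widetilde B'_k$-tiles into $\widetilde{\mathcal{A}}_2=\{0,2\}$: for $x\in[M'_k]$ a position with $\pi(x(u))\neq 1$ must lie in a $\widetilde B'_k$-tile, so the asymptotic mass of $\widetilde B'_k$-tiles is at most $\mu_{\beta_k}(\Sigma^2(\mathcal{A})\setminus\widetilde G_1)+\epsilon_k$; weighting the per-alignment $0$-frequencies from item~\ref{item:EntropyEstimate_2_1} by this mass produces exactly the displayed inequality. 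The pressure bound is then immediate from $P(\beta_k\varphi)\leq h(\mu_{\beta_k})$ (since $\varphi\geq 0$), the entropy decomposition preceding Lemma~\ref{lemma:EntropyEstimate_1}, and that lemma's two estimates. Item~\ref{item:EntropyEstimate_2_3} follows by symmetry, interchanging $A\leftrightarrow B$ and $\widetilde G_1\leftrightarrow\widetilde G_2$. The main obstacle is the alignment bookkeeping at the three scales $\ell_{k-1}, \ell'_k, R'_k$ simultaneously, producing the various correction factors which must all be tracked so that none dominate the target $f^B_{k-1}$-contribution once the constraints (\ref{equation:Constraint_1}) and (\ref{equation:Constraint_2}) are imposed.
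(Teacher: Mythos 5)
Your proposal is correct and follows essentially the same route as the paper: Birkhoff's theorem together with the $\epsilon_k$-bound on the complement of $[M'_k]$, a per-tile count of $\mathcal{A}$-completions of the form $C^{\widehat X}(\ell'_k)\cdot 2^{\#\{0\text{'s}\}}$ (the paper's Lemma~\ref{lemma:CoveringComplexity}), the per-alignment $0$-frequency bounds with the constants $\frac{2}{N'_k}$ and $(1-N_{k-1}^{-1})^{-1}$ (the paper's Lemma~\ref{lemma:FrequencyOf0}, which is where the overlap bookkeeping you defer is actually carried out via Lemmas~\ref{lemma:OverlappingWords}--\ref{lemma:AdmissibilityIntermediateScale}), the absence of the symbol $1$ in $\widetilde B'_k$-tiles to bound their mass by $\mu_{\beta_k}\big(\Sigma^2(\mathcal{A})\setminus\widetilde G_1\big)+\epsilon_k$, and $P(\beta_k\varphi)\leq h(\mu_{\beta_k})$ combined with Lemma~\ref{lemma:EntropyEstimate_1} for the pressure bound. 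One wording slip to fix: the implication must run the other way — every position inside a $\widetilde B'_k$-tile has $\pi(x(u))\neq 1$ — which is exactly what gives your (correct) bound on the mass of the $\widetilde B'_k$-tiles.
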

The term $f_{k-1}^B\ln(2)$ in item \eqref{item:EntropyEstimate_2_2} is the entropy of duplicated and vertically aligned words taken in an intermediate dictionary $B'_k$ of scale $\ell'_k$. The term $f_{k-1}^A\ln(2)$ is interpreted similarly. The term $f^A_{k-1}/(f^B_{k-1}N'_k)$ reflects the fact the ratio of the number of $0$ between words in $A'_k$ and $B'_k$ is $1/N'_k$ for $k$ even (a word in $B'_k$ contains much more $0$ that  a word in $A'_k$). The term $\ln(C'_k)/\ell'_k{^2}$ converges to the entropy of the simulating SFT. Though the entropy of the Aubrun-Sablik subshift has zero entropy, it is not enough to conclude. This argument seems to be missing in the proof in \cite{ChazottesHochman2010}. The purpose of Proposition \ref{proposition:ComplexityFunctionEstimate} is to give a stronger a priori bound of the growth of the relative complexity function  of the simulating SFT provided the set of forbidden patterns $\widetilde{\mathcal{F}}$ are enumerated in a special way. Then, we will use the estimate 
\begin{gather*}
\frac{1}{\ell'_k} \ll f_{k-1}^B,  \  \text{for $k$ even,  $ k\to+\infty$}.
\end{gather*}
In order for $f_{k-1}^B \ln(2)$ to be the dominant term in item (\ref{item:BoundFromBellow_3}) of Lemma~\ref{lemma:BoundFromBellow} and item (\ref{item:EntropyEstimate_2_2}) of Lemma~\ref{lemma:EntropyEstimate_2}, we assume that $\ell'_k$ and $\ell_k$ have been chosen according to the following additional constraint: we assume
\begin{gather}
\begin{cases}
\frac{f_{k-1}^A}{N'_k} \ll f_{k-1}^B,  \  \text{for $k$ even,  $ k\to+\infty$},\\
\frac{f_{k-1}^B}{N'_k} \ll f_{k-1}^A,  \  \text{for $k$ odd,  $ k\to+\infty$}.
\end{cases} \tag{C3} \label{equation:Constraint_3}
\end{gather}

Notice that the two conditions \eqref{equation:Constraint_1} and\eqref{equation:Constraint_2} give us an interval of temperatures as follows:
\[
\ell_{k}f_{k-1}^B \gg \beta_k \gg \frac{(R'_k)^2}{(f_{k-1}^B)^2} ,  \  \text{for $k$ even,  $ k\to+\infty$}. 
\]
These two constraints imply an upper bound of $N'_k$ with respect to $N_k$.  Recalling that $\ell'_k = N'_k \ell_{k-1}$, $R'_k \geq \ell'_k$  and $\ell_k =N_k \ell_{k-1}$, we have
\[
(N'_k)^2 = \left(\frac{\ell'_k}{\ell_{k-1}}\right)^2 \leq \left(\frac{R'_k}{\ell_{k-1}}\right)^2\ll \ell_k\frac{(f^B_{k-1})^3}{\ell^2_{k-1}} = N_k\frac{(f^B_{k-1})^3}{\ell_{k-1}} \ll N_k,  \  \text{for $k$ even,  $ k\to+\infty$}, 
\]
which implies
\[
  N'_k \ll N_k,  \  \text{for $k$ even,  $ k\to+\infty$}.
\]
On the other hand, condition \eqref{equation:Constraint_3} implies a strong constraint on the lower bound of $N'_k$ with respect to $N_k$ as follows
\[
\frac{f_{k-1}^A}{N'_k} \ll f_{k-1}^B \ll f_{k-1}^A,  \  \text{for $k$ even,  $ k\to+\infty$},
\]
which implies
\[
1 \ll N'_k,  \  \text{for $k$ even,  $ k\to+\infty$}.
\]

As a conclusion, we are forced to choose $N'_k$ satisfying 
\[
1 \ll N'_k \ll N_k,  \  \text{for $k$ even,  $ k\to+\infty$}.
\]

The intermediate scale $\ell'_k$ is fundamental for our proof, since it allows us to choose the sequence $(\beta_k)_k$ used to prove the existence of the chaotic behavior.  This part of the argument is not given explicitly in the literature and proofs given by other authors.

We use Proposition \ref{proposition:ReconstructionFunctionEstimate} to obtain an a priori upper bound of the growth of the reconstruction function. This bound is one of the main ingredients of the construction and,  it seems not be highlighted in the other papers about the question.The shape of the reconstruction function has  logarithmic growth. We don't need the exact growth but an explicit growth to obtain a recursive sequence; see next Definition \ref{definition:RecursiveSequence}. The upper bound depends on two properties of the  time enumeration function (Definition \ref{Definition:TimeEnumerationFunction}) of the 1-dimensional set of forbidden patterns. More precisely, the time enumeration function must satisfy that the forbidden words are enumerated successively according to their length (and thus the function is non-decreasing), and that the time to enumerate all words of length $n$ is at most polynomial in $n$. 

We construct by induction $\widetilde{\mathcal{F}_n}$, the full set of forbidden words of $\widetilde X$ of length $n$. We define a primary sequence of scales $(\ell_k)_{k\geq0}$ and a intermediate  sequence of scales $(\ell'_k)_{k\geq0}$ so that, by choosing first $N'_k$ large enough and $\ell'_k = N'_k \ell_{k-1}$, \eqref{equation:Constraint_3} is satisfied, by choosing secondly $N_k$ large enough and $\ell_k = N_k \ell_{k-1}$, \eqref{equation:Constraint_1} and \eqref{equation:Constraint_2} are satisfied and $\beta_k$ is chosen. Essentially it all comes down to check that 
\[
\ell_{k} (f_{k-1}^B)^3 \gg  (R'_k)^2,  \  \text{for $k$ even,  $ k\to+\infty$},
\]
\[
\ell_{k} (f_{k-1}^A)^3 \gg  (R'_k)^2  \  \text{for $k$ odd,  $ k\to+\infty$}.
\]
As $\widetilde L_k$ is constructed by concatenating $N_k$ words of $\widetilde L_{k-1}$, it is clear that a Turing machine might be described such that its time enumeration function is at most exponential independently of the choice of $N_k$. Proposition \ref{proposition:ReconstructionFunctionEstimate} shows that it is enough to choose $N_k$ so that $\ell_k (f_{k-1}^A)^3$ is super-exponential in $\ell'_k$. 

Finally, let us assume that the following constraint holds
\begin{gather}
\begin{cases}
f_{k}^B = f_{k-1}^B &\text{if $k$ is even}, \\
f_k^A = f_{k-1}^A &\text{if $k$ is odd}.
\end{cases} \tag{C4} \label{equation:Constraint_4}
\end{gather}
Then the double estimates, given in  Lemma \ref{lemma:BoundFromBellow} item 3 and  Lemma \ref{lemma:EntropyEstimate_2} item 2,  can be reduced to the estimate (for $k$ even)
\begin{gather*}
\ln(2) f_{k-1}^B + o( f_{k-1}^B) \leq P(\beta_k \varphi) \leq  \mu_{\beta_k} \big(\Sigma^2(\mathcal{A}) \setminus {\widetilde G_1} \big) \ln(2) f_{k-1}^B + o( f_{k-1}^B),
\end{gather*}
using the analogous inequalities, which holds for $k$ odd, $A$ and $\widetilde G_2$, instead of $B$ and $\widetilde G_1$, we obtain
\begin{gather*}
\lim_{k\to+\infty}\mu_{\beta_{2k}}({\widetilde G_1}) = 0, \quad \lim_{k\to+\infty}\mu_{\beta_{2k+1}}({\widetilde G_2}) = 0.
\end{gather*}
We observe that the two a priori estimates in \ref{proposition:ComplexityFunctionEstimate} and \ref{proposition:ReconstructionFunctionEstimate} could be simplified drastically. The only property we need is to have an a priori explicit bound (exponential, super-exponential, or more) of the growth of the reconstruction function, and an a priori sub-exponential bound of the growth of the relative complexity function.

We conclude this section by giving the complete proof Theorem \ref{main.theorem.generalized} assuming the  estimates in Lemmas \ref{lemma:BoundFromBellow}, \ref{lemma:sizeNeighborhoodSubshift}, \ref{lemma:EntropyEstimate_1}, \ref{lemma:EntropyEstimate_2}, and assuming the constraints \eqref{equation:Constraint_1}--\eqref{equation:Constraint_4}.

\begin{proof}[Proof of Theorem \ref{main.theorem.generalized}]
Let $\mu_{\beta_k}$ be an equilibrium measure at inverse temperature $\beta_k$. Assume $k$ is an even number.  Let $\mu_k^B$ be the measure of maximal entropy of the concatenated  subshift $X_k^B$. On the one hand, from  Lemma \ref{lemma:BoundFromBellow}, we have that
\[ P(\beta_k \varphi) \geq h(\mu_k^B) -\int \! \beta_k \varphi \, d\mu_k^B \geq \ln(2) f_k^B  -2D \frac{\beta_k}{\ell_k}. \]
From the constraints \eqref{equation:Constraint_1} and \eqref{equation:Constraint_4}, we have that
\[
\frac{\beta_k}{\ell_k}  \ll f_{k-1}^B = f_k^B, \  \text{for $k$ even,  $ k\to+\infty$, and}
\]
\[
P(\beta_k \varphi) \geq \ln(2) f_{k-1}^B + o(f_{k-1}^B).
\]
On the other hand, by Lemma  \ref{lemma:EntropyEstimate_2}
\begin{align*}
h(\mu_{\beta_k}) &= \lim_{n\to+\infty} \frac{1}{n^2} H\Big( \mathcal{G}^{\llbracket 1,n \rrbracket^2}, \mu_{\beta_k} \Big) \\
&= h_{rel}(\mu_{\beta_k}) + \limsup_{n\to+\infty} \frac{1}{n^2} \Big[ H \Big( {\widetilde{\mathcal{G}}}^{\llbracket 1,n \rrbracket^2} \mid \mathcal{U}_k^{\llbracket 0,n-R_k' \rrbracket^2}, \mu_{\beta_k} \Big)  + H \Big( \mathcal{U}_k^{\llbracket 0,n-R_k' \rrbracket^2}, \mu_{\beta_k} \Big) \Big], \\
P(\beta_k \varphi) &\leq  
\Big( \frac{2}{N'_k}  f_{k-1}^A + (1-N_{k-1}^{-1})^{-1} \Big( \mu_{\beta_k} \big(\Sigma^2(\mathcal{A}) \setminus {\widetilde G_1} \big)  + \epsilon_k \Big) f^B_{k-1}\Big) \ln(2) \\
&\quad  + \frac{1}{\ell'_k} \ln(\Card(\widetilde{\mathcal{A}}))  +  \frac{1}{\ell'_k{}^2}\ln(C'_k) + \epsilon_k \ln (2\Card(\widehat{\mathcal{A}})) \\
&\quad  + \Big( \frac{8}{R'_k}+ \epsilon_k  \Big) \ln(\Card(\widetilde{\mathcal{A}}))+  H(\epsilon_k).
\end{align*}
Constraints \eqref{equation:Constraint_2} and  \eqref{equation:Constraint_3} imply
\[
\epsilon_k \ll f_{k-1}^B \ \ \text{and} \ \ H(\epsilon_k) \ll f_{k-1}^B, \quad \frac{f_{k-1}^A}{N'_k} \ll f_{k-1}^B, \  \text{for $k$ even,  $ k\to+\infty$.}
\]
The fact that $N'_k \to +\infty$ and $\ell_{k-1}f_{k-1}^B \geq1$ implies
\[
\frac{1}{R'_k} \leq  \frac{1}{\ell'_k} \leq \frac{f_{k-1}^B}{N'_k} \ll f_{k-1}^B, \  \text{for $k$ even,  $ k\to+\infty$.}
\]
Proposition \ref{proposition:ComplexityFunctionEstimate} implies
\[
\limsup_{k\to+\infty} \frac{\ln(C'_k)}{\ell'_k} =0 \ \Rightarrow \ \frac{\ln(C'_k)}{\ell'_k{}^2} \ll \frac{1}{\ell'_k}= \frac{1}{N'_k \ell_{k-1}} \ll f_{k-1}^B, \  \text{for $k$ even,  $ k\to+\infty$.}
\]
We finally obtain
\[
\ln(2) f_{k-1}^B + o(f_{k-1}^B) \leq P(\beta_k \varphi) \leq  \mu_{\beta_k} \big(\Sigma^2(\mathcal{A}) \setminus {\widetilde G_1} \big) \ln(2)  f_{k-1}^B + o(f_{k-1}^B), \  \text{for $k$ even, and}
\]
\[
\lim_{k\to +\infty} \mu_{\beta_{2k}}({\widetilde G_1}) = 0.
\]
Finally, using item~\ref{item:EntropyEstimate_2_1} of Lemma~\ref{lemma:EntropyEstimate_2},  $\lim_{k\to+\infty} \mu_{\beta_k}(\widetilde G_0) = 0$, we obtain
\[
\lim_{k\to +\infty} \mu_{\beta_{2k}}({\widetilde G_2}) = 1.
\]
An analogous argument based on item~\ref{item:EntropyEstimate_2_3} of Lemma~\ref{lemma:EntropyEstimate_2} yields \[ \lim_{k\to +\infty} \mu_{\beta_{2k+1}}({\widetilde G_1})  = 1. \]
This concludes the proof.
\end{proof}

\section{The detailed construction}
\label{section.main.construction}

We complete section~\ref{section:OutlineProof} by providing complete proofs of all the previous statements.

\subsection{The one-dimensional effectively closed subshift}

We start by defining an iteration process that will generate the language of $\widetilde X$ over the alphabet $\widetilde{\mathcal{A}}=\{0,1,2\}$. Recall that we use the symbol ``tilde $\sim$'' in all the one-dimensional elements. In a first stage, we will define the values $(\ell_k)_{k\geq0}$ recursively and we define the values of the sequence of inverse temperatures $(\beta_k)_{k\geq0}$. In order to work only with integers, instead of the frequencies $f_k^A,f_k^B$, we shall define the maximum of the number of symbols $0$ counted over all words in $\widetilde A_k$ and $\widetilde B_k$ respectively:
\[
\rho_k^A := \ell_k f_k^A, \quad \rho_k^B := \ell_k f_k^B.
\]

\begin{definition}[The recursive sequence]
\label{definition:RecursiveSequence}\ \\
There exists a partial recursive function $S : \mathbb{N}^4 \to \mathbb{N}^4$
\[
(\ell_k,\beta_k,\rho_k^A,\rho_k^B) = S(\ell_{k-1},\beta_{k-1},\rho_{k-1}^A, \rho_{k-1}^B).
\] 
satisfying $\ell_0=2$, $\beta_0=0$, $\rho_0^A = \rho_0^B = 1$ and defined such that the following holds. In the case $k$ is even:
\begin{enumerate}

\item \label{item:RecursiveSequence_1} $\displaystyle N'_k :=  \Big\lceil \frac{2k \rho_{k-1}^A}{\rho_{k-1}^B} \Big\rceil, \ \ell'_k = N'_k \ell_{k-1}$,
\item \label{item:RecursiveSequence_2} $\displaystyle \beta_k := \Big\lceil\frac{\ell_{k-1}^2 2^{k \ell'_k}}{(\rho_{k-1}^B)^2}\Big\rceil$,
\item \label{item:RecursiveSequence_3} $\displaystyle N_k := N'_k \Big\lceil \frac{k\beta_k}{N'_k \rho_{k-1}^B} \Big\rceil, \ \ell_k = N_k \ell_{k-1}$,
\item \label{item:RecursiveSequence_4} $\displaystyle \rho^A_k =2\rho_{k-1}^A,  \   \rho^B_k =N_k  \rho^B_{k-1}$,
\end{enumerate}
In the case $k$ is odd: $(\ell_k,\beta_k,\rho_k^A,\rho_k^B)$ are computed as before with $A$ and $B$ permuted.
\end{definition}

The previous sequence $(\ell_k,\beta_k,\rho_k^A,\rho_k^B) _{k\geq0}$ has been chosen so that, first the induction step is explicit in terms of simple (computable) operations, and secondly, such that the four constraints (C1)--(C4) are satisfied. We first observe the following inequalities. 

\begin{remark} \label{Remark:InductionChecking}
For all $k\geq1$ we have the following properties:
\begin{enumerate}
\item \label{Item:InductionChecking_1} $2k  \leq N'_k \leq  2k \ell_{k-1} $,
\item $2^{k\ell'_k} \leq  \beta_k \leq \frac{\ell_k}{k}$, 
\item $N_{k-1} \leq N'_k \leq N_k$,
\item if $k$ is odd, $\rho_k^A \geq \rho_k^B$, if $k$ is even, $\rho_k^B \geq \rho_k^A$, 
\item $\displaystyle \beta_k  \leq \frac{\ell_k \beta_{k+1} }{k 2^{(k+1)\ell_k}}\leq \beta_{k+1}$,
\item $f_k^A \ll 1$, $f_k^B \ll 1$, $\displaystyle \frac{N_k}{N'_k} \gg 1$, $\displaystyle \frac{N'_k}{N_{k-1}} \gg 1$, $\displaystyle \frac{\beta_{k+1}}{\beta_k} \gg 1$, 
\item if $k$ is odd, $f_k^A \gg f_k^B$, if $k$ is even, $f_k^A \ll f_k^B$,  when $k\to+\infty$.
\end{enumerate}
\end{remark}

\begin{lemma}
The four constraints (C1)--(C4) are satisfied
\end{lemma}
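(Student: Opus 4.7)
The plan is to verify the four constraints separately by direct computation from Definition \ref{definition:RecursiveSequence}, doing the $k$ even case in detail; the $k$ odd case is entirely symmetric. Throughout I use only $\rho_{k-1}^A,\rho_{k-1}^B \geq 1$ and $\ell_{k-1}\to\infty$.

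First, constraint \eqref{equation:Constraint_4} is immediate from items \ref{item:RecursiveSequence_3} and \ref{item:RecursiveSequence_4}: since $\rho_k^B = N_k\rho_{k-1}^B$ and $\ell_k = N_k\ell_{k-1}$, we get $f_k^B = \rho_k^B/\ell_k = \rho_{k-1}^B/\ell_{k-1} = f_{k-1}^B$. Next, for constraint \eqref{equation:Constraint_3}, item \ref{item:RecursiveSequence_1} yields $N'_k \geq 2k\rho_{k-1}^A/\rho_{k-1}^B$, so
\[
\frac{f_{k-1}^A}{N'_k} = \frac{\rho_{k-1}^A}{N'_k\,\ell_{k-1}} \leq \frac{\rho_{k-1}^B}{2k\,\ell_{k-1}} = \frac{f_{k-1}^B}{2k} \ll f_{k-1}^B.
\]
For constraint \eqref{equation:Constraint_1}, item \ref{item:RecursiveSequence_3} gives $N_k \geq k\beta_k/\rho_{k-1}^B$, hence $\ell_k = N_k\ell_{k-1} \geq k\beta_k/f_{k-1}^B$, and combining with \eqref{equation:Constraint_4} already established,
\[
\frac{\beta_k}{\ell_k} \leq \frac{f_{k-1}^B}{k} = \frac{f_k^B}{k} \ll f_k^B.
\]

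The main work is constraint \eqref{equation:Constraint_2}, which is the only one that uses the a priori bound on the reconstruction function of the Aubrun--Sablik extension. From item \ref{item:RecursiveSequence_2} we have $\beta_k \geq \ell_{k-1}^2\,2^{k\ell'_k}/(\rho_{k-1}^B)^2 = 2^{k\ell'_k}/(f_{k-1}^B)^2$, so
\[
\frac{(R'_k)^2}{\beta_k\,(f_{k-1}^B)^2} \leq \frac{(R'_k)^2}{2^{k\ell'_k}}.
\]
The construction of $\widetilde{\mathcal{F}}$ from the recursive sequence, together with the bound $R^{\widetilde X}(n) \leq n$ following Lemma \ref{lemma:OnedimensionalConcatenatedSubshift}, is arranged so that the hypotheses of Proposition \ref{proposition:ReconstructionFunctionEstimate} hold. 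This yields $R^{\widehat X}(n) \leq e^{Cn}$ for some constant $C$ and all $n$ large enough, so $R'_k = 2R^{\widehat X}(\ell'_k)+1 \leq e^{C'\ell'_k}$ for some $C'$ and $k$ sufficiently large. For all $k$ with $k\ln 2 > 2C'$ we therefore get $(R'_k)^2/2^{k\ell'_k} \leq e^{(2C'-k\ln 2)\ell'_k} \to 0$, which is \eqref{equation:Constraint_2}.

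For $k$ odd every step above applies verbatim with the roles of $A$ and $B$ interchanged, since the recursion is defined symmetrically. The hard step is \eqref{equation:Constraint_2}, whose success rests on the super-exponential lower bound $\beta_k \geq 2^{k\ell'_k}$ engineered into item \ref{item:RecursiveSequence_2} overpowering the merely exponential upper bound on $R'_k$ supplied by Proposition \ref{proposition:ReconstructionFunctionEstimate}; the additional factor $k$ in the exponent of $\beta_k$ is exactly what guarantees that the ratio vanishes regardless of the value of $C$.
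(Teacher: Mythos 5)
Your proof is correct and follows essentially the same route as the paper: (C4) and (C3) read off directly from items 4 and 1 of the recursive sequence, (C1) from item 3 via $N_k \geq k\beta_k/\rho_{k-1}^B$, and (C2) from the super-exponential factor $2^{k\ell'_k}$ in item 2 beating the exponential bound on $R^{\widehat X}$ from Proposition \ref{proposition:ReconstructionFunctionEstimate}. Your write-up is in fact slightly more explicit than the paper's (which leaves the final comparison $\beta_k \gg (R'_k)^2/(f_{k-1}^B)^2$ as a one-line assertion), but there is no substantive difference.
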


\begin{proof}
We assume that $k$ is even. The constraint \eqref{equation:Constraint_4} is satisfied thanks to item \ref{item:RecursiveSequence_4} of \ref{definition:RecursiveSequence}. The constraint \eqref{equation:Constraint_3} is satisfied  thanks to item \ref{item:RecursiveSequence_1}  of \ref{definition:RecursiveSequence} and
\[
\frac{f_{k-1}^A}{f_{k-1}^B} \leq \frac{N'_k}{2k} \ll N'_k,  \  \text{for $k$ even,  $ k\to+\infty$.}
\]
The constraint \eqref{equation:Constraint_2} is satisfied thanks to item \ref{item:RecursiveSequence_2} of \ref{definition:RecursiveSequence} (especially the fact that we chose a superexponential growth $2^{k\ell'_k}$ instead of $2^{\ell'_k}$) and the assumption on the bound for the reconstruction function (see Proposition \ref{proposition:ReconstructionFunctionEstimate}),
\[
\limsup_{k\to+\infty}\frac{\ln(R'_k)}{\ell'_k} <+ \infty \ \text{and} \ \beta_k \geq \frac{2^{k\ell'_k}}{(f_{k-1}^B)^2} \gg \frac{(R'_k)2}{(f_{k-1}^B)},  \  \text{for $k$ even,  $ k\to+\infty$.}
\]
The constraint \eqref{equation:Constraint_1} is satisfied thanks to item \ref{item:RecursiveSequence_3} of \ref{definition:RecursiveSequence} and
\[
\beta_k \leq \frac{N_k \rho_{k-1}^B}{k} \leq \frac{\ell_k f_{k-1}^B}{k} \ll \ell_k f_{k-1}^B = \ell_k f_k^B,  \  \text{for $k$ even,  $ k\to+\infty$.}
\]
\end{proof}

We now construct the effectively closed 1-dimensional subshifts $\widetilde A_k$ and $\widetilde B_k$

\begin{definition}
\label{notation:BaseLanguageBis}
For each $k\geq 0$ the dictionaries $\widetilde{A}_k$ and $\widetilde{B}_k$ are made of two words of length $\ell_k$
\[ 
\widetilde{A}_k=\{a_k,1^{\ell_k}\} \subset \widetilde{\mathcal{A}}_1^{\llbracket 1,\ell_k \rrbracket}, \quad \widetilde{B}_k=\{b_k,2^{\ell_k}\} \subset \widetilde{\mathcal{A}}_2^{\llbracket 1,\ell_k \rrbracket}, 
\]
defined by induction as follows:
\begin{enumerate}
\item $\ell_0=2$, $a_0=01$ and $b_0=02$,
\[ \widetilde{A}_0=\{01,11\} \quad \mbox{and} \quad \widetilde{B}_0=\{02,22\}. \]
\item  if $k\geq 1$ is odd 
\begin{equation}
a_k=\underbrace{a_{k-1}a_{k-1}\cdots a_{k-1}}_{N_k\mbox{-times}}, \quad b_k=b_{k-1}2^{(N_k-2)\ell_{k-1}}b_{k-1}, \tag{R1} \label{rule.odd}
\end{equation}
\item if $k\geq 2$ is even
\begin{equation}
a_k=a_{k-1}1^{(N_k-2)\ell_{k-1}}a_{k-1}, \quad
b_k=\underbrace{b_{k-1}b_{k-1}\cdots b_{k-1}}_{N_k\mbox{-times}}.
\tag{R2} \label{rule.even}
\end{equation}
\end{enumerate}
\end{definition}

Notice that by definition, every word in $\widetilde L_k = \widetilde A_k \bigsqcup \widetilde B_k$ is the concatenation of $N_k$ words of $\widetilde L_{k-1}$, and that the concatenation of two words of $\widetilde L_k$ is a word of $\widetilde L_{k+1}$. It follows that the assumptions of Lemma~\ref{lemma:OnedimensionalConcatenatedSubshift} are satisfied. We~now proceed to prove that lemma. Recall that
\[
\widetilde X := \bigcap_{k\geq0}\langle \widetilde L_k \rangle,
\]
and let $\widetilde{\mathcal{F}}:=\bigsqcup_{n\in\NN}\widetilde{\mathcal{F}}(n)$ be the set of all forbidden patterns which are obtained by taking all words of length $n$ that are not subwords of the concatenation of two words of $\widetilde L_k$ for some $k\geq0$ such that $\ell_k \geq n$. 

\begin{proof}[Proof of Lemma \ref{lemma:OnedimensionalConcatenatedSubshift}]
\quad

{\it Proof of item \ref{Item:OnedimensionalConcatenatedSubshift_1}.}
By  assumption every word in $\widetilde L_{k+1}$ is a concatenation of words in $\widetilde L_k$. Then the concatenated subshifts obviously satisfy  $\langle \widetilde L_{k+1} \rangle\subseteq\langle \widetilde L_k \rangle$.
	
{\it Proof of item \ref{Item:OnedimensionalConcatenatedSubshift_2}.}
Let $x \in \widetilde X$ and $n\geq1$. Then $x \in \langle \widetilde L_k \rangle$ for some $n \leq \ell_k$. Therefore  $x \in \Sigma^1(\widetilde{\mathcal{A},} \widetilde{\mathcal{F}}_n)$ and $\widetilde X \subseteq \Sigma^1(\widetilde{\mathcal{A}}, \widetilde{\mathcal{F}})$. Conversely let $x \in \Sigma^1(\widetilde{\mathcal{A}}, \widetilde{\mathcal{F}})$ and $k\geq0$. Define the interval
\begin{displaymath}
 I_k := \Big\llbracket 1 - \Big\lfloor \frac{\ell_k}{2} \Big\rfloor,\ell_k- \Big\lfloor \frac{\ell_k}{2} \Big\rfloor  \Big\rrbracket.
\end{displaymath}
For any $j \geq k$, as $x \in \Sigma^1(\widetilde{\mathcal{A}}, \widetilde{\mathcal{F}}_{\ell_j})$, $x|_{I_j}$ is a subword of the concatenation of two words of length $\ell_j$ of $\widetilde L_j$. As $\langle \widetilde L_j \rangle \subseteq  \langle \widetilde L_k \rangle$,  $x|_{I_j}$ is a subword of the concatenation of words of length $\ell_k$ of $\widetilde L_k$. Let $y_j \in \langle \widetilde L_k \rangle$  such that $y_j|_{I_j} = x|_{I_j}$. By compactness of $\langle \widetilde L_k \rangle$, the sequence $(y_j)_{j\geq0}$ admits an accumulation point $y=x \in \langle \widetilde L_k \rangle$. Therefore $x \in \widetilde X$.

{\it Proof of item \ref{Item:OnedimensionalConcatenatedSubshift_3}.} We have obviously
\[
\forall\, n \leq \ell_k, \ \mathcal{L}(\widetilde X,n) \subseteq \mathcal{L}(\langle \widetilde L_k \rangle,n).
\]
Conversely consider two words $u_n,v_n\in \widetilde L_n$. We want to show that the concatenation $w_n := u_n v_n$ is a subword of some $x \in \widetilde X$.  We may assume  that $w_n$ is a pattern of support $K_n :=\llbracket 1 -\ell_n, 2\ell_n-\ell_n \rrbracket$. We construct by induction a sequence of patterns $(w_m)_{m\geq n}$ of support $K_m = \llbracket a_m, b_m \rrbracket$, $b_m-a_m = 2 \ell_m-1$, such that
\begin{itemize}
\item $w_m$ is  equal to  the concatenation of two words of $L_m$,
\item $K_m \subseteq K_{m+1}$ and $w_{m+1}|{K_m}=w_m$,
\item if $m$ is even then $b_{m+1}> b_m$, if $m$ is odd then $a_{m+1} < a_m$.
\end{itemize}
Indeed, assume $m$ is even and $w_m$ has been constructed. Then, by hypothesis on the language $\widetilde L_{m}$, $w_m$ is the subword of the concatenation of two words of $\widetilde L_{m+1}$. Let $\widetilde w_{m+1} = \widetilde u_{m+1} \widetilde v_{m+1}$ be the corresponding pattern of support $\widetilde K_{m+1} \supseteq K_m$ of length $2 \ell_{m+1}$ containing $w_m$. If $b_{m+1} > b_m$ we choose $K_{m+1}=\widetilde K_{m+1}$ and $w_{m+1} = \widetilde w_{m+1}$. If $b_{m+1}=b_m$, as $\ell_{m+1} \geq 2 \ell_m$ ($\ell_{m+1}> \ell_m$ and $\widetilde L_{m+1}$ is obtained by concatenating words of $\widetilde L_m$), then $w_m$ is a subword of the rightmost word of $\widetilde w_{m+1}$, that is $w_n \sqsubset \widetilde v_{m+1}$. We choose any word $v_{m+1}$ in $\widetilde L_{m+1}$, define the pattern $w_{m+1}=\widetilde v_{m+1}v_{m+1}$, and call $K_{m+1}$ the corresponding support. In the case $m$ is odd we do the analogous construction but on the left-hand side.

Let $x$ be the configuration such that $x| K_m = w_m$ for every $m\geq n$. Let $y_m \in \langle L_m \rangle$ be a configuration such that $w_m = y_m | K_m$. It follows that the sequence $(y_m)_{m\geq n}$ admits an accumulation point $y \in \widetilde X$ which satisfies $x|K_m = y|K_m$ for every $m\geq n$, and therefore $x=y \in \widetilde X$. This shows that $w_n\in\mathcal{L}(X, 2\ell_n)$.
\end{proof}

It is clear from the above arguments that $\Sigma^1(\widetilde{A},\widetilde{\mathcal{F}})$ is an effectively closed subshift. The following lemma shows that $\widetilde{\mathcal{F}}$ satisfies items (\ref{item:ReconstructionFunctionEstimate_1})--(\ref{item:ReconstructionFunctionEstimate_3}) of Proposition \ref{proposition:ReconstructionFunctionEstimate}. 

\begin{lemma}
\label{proposition:algorithm}
The following holds:
\begin{enumerate}
\item \label{Item:algorithm_1} The reconstruction function satisfies $R^{\widetilde X}(n)=n$.
\item \label{Item:algorithm_2} For every $n\geq0$, there exist unique integers $k\geq1$ and $N_k \geq p\geq2$ satisfying   
	\[
	\ell_{k-1} < n \leq  \ell_k \ \ \text{and} \ \ (p-1)\ell_{k-1} < n \leq p \ell_{k-1}.
	\]
If $(N_k-1)\ell_{k-1} < n \leq N_k \ell_{k-1}$, define $\widetilde{\mathcal{F}}'(n)=\widetilde{\mathcal{F}}(n)$. If $n \leq (N_k-1) \ell_{k-1}$, define $\widetilde{\mathcal{F}}'(n)$ as the set of words of length $n$ that are not subwords of any word of the form $\overrightarrow{w_1}\overleftarrow{w_2}$ where $\overrightarrow{w_1}$ is a terminal segment of $w_1$ of length $(p+1)\ell_{k-1}$, $\overleftarrow{w_2}$ is an initial segment of $w_2$  of length $(p+1)\ell_{k-1}$, and $w_1$ or $w_2$ are  either one of the words $a_k,b_k,1_k,2_k$. Then
\[
\forall\, n \in \llbracket 1, \ell_k \rrbracket, \ \widetilde{\mathcal{F}}'(n)=\widetilde{\mathcal{F}}(n).
\]
	
\item \label{Item:algorithm_3} There exists a Turing machine $\widetilde{\mathbb{M}}$ such that the patterns of $\widetilde{\mathcal{F}}$ are enumerated in increasing order, and such that there is a polynomial $P(n)$ such that the time enumeration function satisfies $T^{\widetilde X}(n)\leq P(n)|\widetilde{\mathcal{A}}|^n$ for every $n \geq 0$.
\end{enumerate}
\end{lemma}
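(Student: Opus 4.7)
The plan is to handle the three items in order, leveraging the explicit hierarchical structure of $\widetilde L_k$ from Definition \ref{notation:BaseLanguageBis} together with Lemma \ref{lemma:OnedimensionalConcatenatedSubshift}. For item \ref{Item:algorithm_1}, I aim to show that any locally $\widetilde{\mathcal{F}}$-admissible pattern $w \in \widetilde{\mathcal{A}}^{\llbracket -n,n\rrbracket}$ is itself globally $\widetilde{\mathcal{F}}$-admissible, which yields $R^{\widetilde X}(n) \le n$ (the reverse inequality being immediate from the definition). Indeed, local admissibility implies that $w$ itself does not belong to $\widetilde{\mathcal{F}}(2n+1)$, so the definition of $\widetilde{\mathcal{F}}$ provides some $k$ with $\ell_k \geq 2n+1$ such that $w$ is a subword of a concatenation of two words of $\widetilde L_k$, and item 3 of Lemma \ref{lemma:OnedimensionalConcatenatedSubshift} then gives $w \in \mathcal{L}(\widetilde X, 2n+1)$.

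For item \ref{Item:algorithm_2}, the inclusion $\widetilde{\mathcal{F}}(n) \subseteq \widetilde{\mathcal{F}}'(n)$ is immediate because each $\overrightarrow{w_1}\overleftarrow{w_2}$ is a subword of the concatenation $w_1 w_2$, so a word avoiding every such boundary segment also avoids every concatenation in $\widetilde L_k$. For the reverse inclusion, assume $w \notin \widetilde{\mathcal{F}}(n)$ with $n \leq (N_k-1)\ell_{k-1}$; by item 3 of Lemma \ref{lemma:OnedimensionalConcatenatedSubshift}, $w$ is a subword of some concatenation $w_1 w_2$ with $w_1, w_2 \in \widetilde L_k = \{a_k, b_k, 1^{\ell_k}, 2^{\ell_k}\}$. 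If $w$ straddles the boundary between $w_1$ and $w_2$ it consists of a terminal piece of $w_1$ of length $s_1$ and an initial piece of $w_2$ of length $s_2 = n - s_1$; since $s_1, s_2 \leq n \leq p\ell_{k-1} < (p+1)\ell_{k-1}$, the word $w$ already fits inside $\overrightarrow{w_1}\overleftarrow{w_2}$. If instead $w$ lies entirely within one of the two factors, a direct inspection of the rules \eqref{rule.odd}--\eqref{rule.even} shows that $w$ is a subword either of $\overrightarrow{w_1}\overleftarrow{w_1}$ (whenever $w$ touches one of the two extremal scale-$\ell_{k-1}$ blocks of the containing factor, or that factor has the uniform repetitive structure $a_{k-1}^{N_k}$ or $b_{k-1}^{N_k}$) or of $\overrightarrow{1^{\ell_k}}\overleftarrow{1^{\ell_k}}$ or $\overrightarrow{2^{\ell_k}}\overleftarrow{2^{\ell_k}}$ (whenever $w$ lies strictly in the monochromatic middle of an $a_k$ or $b_k$ of the opposite parity).

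For item \ref{Item:algorithm_3}, the Turing machine $\widetilde{\mathbb{M}}$ iterates $n = 1, 2, \ldots$ in increasing order and, for each $n$, enumerates $\widetilde{\mathcal{F}}(n)$ via the characterization from item \ref{Item:algorithm_2}. The machine first executes the recursive function $S$ of Definition \ref{definition:RecursiveSequence} to compute $(\ell_j, \beta_j, \rho_j^A, \rho_j^B)$ for $j = 0, 1, \ldots$ until $\ell_{k-1} < n \leq \ell_k$; since $\ell_j$ at least doubles at each step one has $k = O(\log n)$ and the bit lengths of these integers remain polynomial in $n$. It then computes the initial and terminal segments of length $(p+1)\ell_{k-1} \leq n$ of $a_k, b_k, 1^{\ell_k}, 2^{\ell_k}$ by descending the rules \eqref{rule.odd}--\eqref{rule.even}: each such segment requires only segments of length at most $n$ at the smaller scale, and an easy induction gives a polynomial-in-$n$ running time. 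The machine then iterates over all $|\widetilde{\mathcal{A}}|^n$ words of length $n$ and checks in linear time per word whether it appears as a subword of any of the (constantly many) candidate patterns $\overrightarrow{w_1}\overleftarrow{w_2}$; summing over the lengths $m \leq n$ yields $T^{\widetilde X}(n) \leq P(n)|\widetilde{\mathcal{A}}|^n$ for a suitable polynomial $P$. The main obstacle is the combinatorial case analysis in item \ref{Item:algorithm_2} for windows lying strictly inside a word of $\widetilde L_k$; once this is carried out by direct inspection of the recursive rules, items \ref{Item:algorithm_1} and \ref{Item:algorithm_3} follow essentially mechanically.
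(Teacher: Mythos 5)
Your proposal is correct and follows essentially the same route as the paper: item 1 via the definition of $\widetilde{\mathcal{F}}$ combined with item 3 of Lemma \ref{lemma:OnedimensionalConcatenatedSubshift}, item 2 via the easy subword inclusion plus a case analysis on where $w$ sits inside $w_1w_2$ (the paper phrases this as ``dragging'' $w$ across $w_1w_2$), and item 3 via an enumerator based on the $\widetilde{\mathcal{F}}'$ characterization with polynomial work per candidate word of length at most $n$. Two cosmetic slips that do not affect the argument: the clause ``a word avoiding every such boundary segment also avoids every concatenation'' states the implication backwards (the easy inclusion $\widetilde{\mathcal{F}}(n)\subseteq\widetilde{\mathcal{F}}'(n)$ comes from the fact that a word which is not a subword of any $w_1w_2$ is a fortiori not a subword of any $\overrightarrow{w_1}\overleftarrow{w_2}$), and $(p+1)\ell_{k-1}$ is actually larger than $n$ (though still of size $O(n)$, which is all the polynomial time bound requires).
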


 \begin{proof}
\quad
{\it Proof of item (\ref{Item:algorithm_1}).} A word $w$ of length $n$ that is not in $\widetilde{\mathcal{F}}$ is a subword of the concatenation of two words of length $\ell_k$ of $\widetilde L_k$. Lemma \ref{lemma:OnedimensionalConcatenatedSubshift} shows that $w \in \mathcal{L}(\widetilde X,n)$.

{\it Proof of item (\ref{Item:algorithm_2}).} We may assume $n \leq (N_k-1)\ell_{k-1}$ and  $p<N_k$. We have obviously $\widetilde{\mathcal{F}}(n) \subseteq \widetilde{\mathcal{F}}'(n)$. If we assume that $k$ is even, from Definition~\ref{notation:BaseLanguageBis} we have that
\[a_k = a_{k-1}(1^{\ell_{k-1}})^{N_k-2}a_{k-1}\quad \mbox{and}\quad b_k = (b_{k-1})^{N_k}.\]

We set
\[\overleftarrow{a_k}(1) = \overrightarrow{a_k}(1) = a_{k-1},\quad \overleftarrow{b_k}(1) = \overrightarrow{b_k}(1) = b_{k-1},
\]
\[
\overleftrightarrow{1_k}(1) = 1_{k-1}:=1^{\ell_{k-1}} \ \mbox{and} \ \overleftrightarrow{2_k}(1) = 2_{k-1}:=2^{\ell_{k-1}}.\]
Then we define by induction: if $2 \leq p<N_k$ then
\[\overleftarrow{a_k}(p) = \overleftarrow{a_k}(p-1) 1_{k-1} = a_{k-1}(1_{k-1})^{p-1}\]
and
\[\overrightarrow{a_k}(p)= 1_{k-1}\overrightarrow{a_k}(p-1) = (1_{k-1})^{p-1} a_{k-1},\]
else \ $\overleftarrow{a_k}(N_k) = \overrightarrow{a_k}(N_k) = a_k$. We also define
\[\overleftarrow{b_k}(p)= \overleftarrow{b_k}(p-1) b_{k-1} = (b_{k-1})^{p},\]
\[\overrightarrow{b_k}(p) = b_{k-1}\overrightarrow{b_k}(p-1) = (b_{k-1})^{p},\]
\[\overleftrightarrow{1_k}(p)=  \overleftrightarrow{1_k}(p-1)1_{k-1}= (1_{k-1})^{p}\]
and
\[\overleftrightarrow{2_k}(p)=  \overleftrightarrow{2_k}(p-1)1_{k-1}= (2_{k-1})^{p}.\]

If $w$ has length less than $p\ell_{k-1}$ and is a subword of some $w_1w_2$, say $w_1=a_k$ and $w_2=b_k$, by dragging $w$ from the left end point of $w_1w_2$ to the right end point of $w_1w_2$, the word $w$ appears  successively as a subword of $\overleftarrow{a_k}(p+1)$, $\overleftrightarrow{1_k}(p+1)$, $\overrightarrow{a_k}(p+1)$,  $\overrightarrow{a_k}(p+1)\overleftarrow{b_k}(p+1)$, $\overleftarrow{b_k}(p+1)$.  A similar reasoning is also true for $w_1=b_k$ and $w_2=a_k$. We have shown that $\widetilde{\mathcal{F}}(n) =\widetilde{\mathcal{F}}'(n)$.

{\it Proof of item (\ref{Item:algorithm_3}).} To compute the time to enumerate successively the words of $\widetilde{\mathcal{F}}(n) $ when $\ell_{k-1} < n  \leq \ell_k$, we can produce instead an algorithm which enumerates $\widetilde{\mathcal{F}}'$. The time to read/write on the tapes, to update the words $(\overleftarrow{a_k}(p),\overrightarrow{a_k}(p),\overleftarrow{b_k}(p),\overrightarrow{b_k}(p)$, $\overleftrightarrow{1_k}(p),\overleftrightarrow{2_k}(p))$ by adding a word of length $\ell_{k-1}$, to concatenate two words $\overrightarrow{w_1}\overleftarrow{w_2}$ from that list, and to check that a given word $w$ of length $n$ is a subword of $\overrightarrow{w_1}\overleftarrow{w_2}$ is polynomial in $n$. Therefore, the time to enumerate every word up to length $n$ in an alphabet $\widetilde{\mathcal{A}}$ is bounded by $P(n)|\widetilde{\mathcal{A}}|^n$ where $P(n)$ is some fixed polynomial.
\end{proof}

\subsection{The intermediate dictionaries}

We shall now study the complexity of the set of words $\mathcal{L}(\widetilde X, \ell_k')$ of length $\ell'_k = N'_k \ell_{k-1}$.

\begin{definition} \label{notation:BaseLanguageTer}
Let $\widetilde A'_k$ and $\widetilde B'_k$  be the sub-dictionaries of $\widetilde A_k$ and $\widetilde B_k$ that are made of subwords of length $\ell'_k$ that are either initial or terminal words of a word in $\widetilde A_k$ and $\widetilde B_k$. Formally,
\begin{enumerate}
\item if $k$ is odd, $\widetilde A'_k = \{ a'_k, 1^{\ell'_k}\}$, $\widetilde B'_k = \{b'_k,b''_k,2^{\ell'_k}\}$,
		\begin{equation}
 a'_k := \underbrace{a_{k-1} \cdots a_{k-1}}_{\text{$N'_k$ times}}, \quad
b'_k := b_{k-1} 2^{(N'_k-1)\ell_{k-1}}, \quad
b''_k := 2^{(N'_k-1)\ell_{k-1}}b_{k-1},
		\tag{R'1}\label{rule.odd.prime}
		\end{equation}
\item if $k$ is even, $\widetilde A'_k = \{ a'_k,a''_k,1^{\ell'_k}\}$, $\widetilde B'_k = \{ b'_k,2^{\ell'_k}\}$,
		\begin{equation}
a'_k := a_{k-1} 1^{(N'_k-1)\ell_{k-1}}, \\
a''_k = 1^{(N'_k-1)\ell_{k-1}}a_{k-1} \mbox{ and }\\
b'_k := \underbrace{b_{k-1}\cdots b_{k-1}}_{\text{$N'_k$ times}}. \\
\tag{R'2}
		\label{rule.even.prime}
		\end{equation}
\item $\widetilde{L}_k':=\widetilde{A}_k'\bigsqcup \widetilde{B}_k'$.
\end{enumerate}
\end{definition}

Notice that $\widetilde A'_k$ and $\widetilde B'_k$ have been chosen so that  the words of $\widetilde A_k$ (respectively $\widetilde B_k$) are obtained by concatenating $N_k/N'_k$ words of $\widetilde A'_k$ (respectively $\widetilde B'_k$). In particular we have
\[
\langle \widetilde A_k \rangle \subset \langle \widetilde A'_k \rangle \ \ \text{and} \ \ \langle \widetilde B_k \rangle \subset \langle \widetilde B'_k \rangle.
\]

We will say that two words $a,b \in \widetilde{\mathcal{A}}^\ell$ overlap if there exists a non-trivial shift $0 <s < \ell$ such that the terminal segment of  length $s$ of the word $a$ coincides with the initial segment of the word $b$ of the same length, or vice-versa by permuting $a$ and $b$. Note that we exclude the overlapping where $a$ and $b$ coincide.

The next three results are technical lemmas about the possible types of overlapping of words of $A'_k$ or $B'_k$. The first lemma asserts that there is no possible overlapping between words of $\widetilde A'_k$ and words of $\widetilde B'_k$. The next two lemmas characterize the possible overlaps between any two words at each stage $k$ of the iteration process.

\begin{lemma}
	\label{lemma:NoOverlappingAB}
	In our construction described above, a word from $\widetilde A_k'$ and a word from $\widetilde B_k'$ do not overlap. Similarly, a word from $\widetilde A_k$ and a word from $\widetilde B_k$ do not overlap.
\end{lemma}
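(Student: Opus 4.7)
The plan is to exploit the fact that $\widetilde{\mathcal{A}}_1\cap\widetilde{\mathcal{A}}_2=\{0\}$: since every word in $\widetilde A_k'$ (resp.\ $\widetilde B_k'$) uses only symbols from $\widetilde{\mathcal{A}}_1=\{0,1\}$ (resp.\ $\widetilde{\mathcal{A}}_2=\{0,2\}$), any overlap between such words must have its overlapping region made exclusively of the symbol $0$. In particular, if the terminal segment of length $s$ of the word on the left coincides with the initial segment of length $s$ of the word on the right, then the last letter of the left-hand word must be $0$.

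The main inductive claim to establish is: for every $k\geq 0$, every word of $\widetilde A_k$ ends with the symbol $1$ and every word of $\widetilde B_k$ ends with the symbol $2$. The base case is immediate from $\widetilde A_0=\{01,11\}$ and $\widetilde B_0=\{02,22\}$. For the inductive step, both rules \eqref{rule.odd} and \eqref{rule.even} write $a_k$ as a concatenation whose rightmost block is $a_{k-1}$, which ends in $1$ by hypothesis; analogously $b_k$ always ends with a copy of $b_{k-1}$. The trivial words $1^{\ell_k}\in\widetilde A_k$ and $2^{\ell_k}\in\widetilde B_k$ have the required endpoints tautologically. A direct inspection of Definition~\ref{notation:BaseLanguageTer} together with rules \eqref{rule.odd.prime} and \eqref{rule.even.prime} then shows that every word of $\widetilde A_k'$ ends in $1$ (its rightmost block is either a copy of $a_{k-1}$ or a pure block of $1$s) and every word of $\widetilde B_k'$ ends in $2$.

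To conclude, suppose for contradiction that $u\in\widetilde A_k'$ and $v\in\widetilde B_k'$ overlap with some shift $0<s<\ell_k'$. If the terminal segment of $u$ of length $s$ coincides with the initial segment of $v$ of length $s$, then by the alphabet remark all $s$ matched entries lie in $\{0\}$, so in particular the last letter of $u$ equals $0$, contradicting that it is $1$. The vice-versa case is excluded symmetrically because the last letter of $v$ would have to be $0$ whereas it is $2$. The identical argument with $\widetilde A_k$ and $\widetilde B_k$ in place of their primed versions gives the second assertion. There is no hidden difficulty: the argument is pure alphabet separation, and the only bookkeeping lies in verifying uniformly across both parities of $k$ that the rightmost block in the recursive definition of $a_k$ and $b_k$ is always $a_{k-1}$ and $b_{k-1}$ respectively.
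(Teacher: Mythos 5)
Your proposal is correct and follows essentially the same route as the paper: the paper's proof also rests on the observation that every word of $\widetilde A_k'$ (resp.\ $\widetilde A_k$) ends with the symbol $1$, which never occurs in a word of $\widetilde B_k'$ (resp.\ $\widetilde B_k$), and symmetrically with the symbol $2$. The only difference is that you spell out by induction the fact that the rightmost block is always a copy of $a_{k-1}$ or $b_{k-1}$, which the paper treats as immediate from the construction.
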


\begin{proof}
	Every word in $\widetilde A_k'$ ends with the symbol $1$ which does not appear in any word in $\widetilde B_k'$. Conversely, every word in $\widetilde B_k'$ ends with the symbol $2$ that does not appear in any word in $\widetilde A_k'$. The same argument is valid for the words in $\widetilde A_k$ and $\widetilde B_k$.
\end{proof}

The next lemma is formulated for the case $k$ even, but a similar lemma holds for the case $k$ odd. First we need to fix some notations. Consider  the rules described in (\ref{rule.even}) and (\ref{rule.even.prime}). The {\it initial segment}  of $a_k$ and $a'_k$, the {\it terminal segment}   of $a_k$ and $a''_k$, and the {\it marker}  are the following subwords, for $k$ even
\begin{gather*}
a_k = \underbrace{a_{k-1}}_{a_{k-1}^I} 1^{(N_k-2)\ell_{k-1}} \underbrace{a_{k-1}}_{a_{k-1}^T}, \quad b'_k = \underbrace{b_{k-1}}_{b_{k-1}^I} b_{k-1}^{(N'_k-2)}\underbrace{b_{k-1}}_{b_{k-1}^T}, \\
a_k' = \underbrace{a_{k-1}}_{a_{k-1}^I} \underbrace{1^{(N_k'-1)\ell_{k-1}}}_{\mbox{marker}} \quad a_k'' = \underbrace{1^{(N_k'-1)\ell_{k-1}}}_{\mbox{marker}} \underbrace{a_{k-1}}_{a_{k-1}^T}.
\end{gather*}
Note that $a_{k-1}^I=a_{k-1}^T=a_{k-1}$ and $b_{k-1}^I=b_{k-1}^T=b_{k-1}$.

\begin{lemma}
	\label{lemma:OverlappingWords}
	Let $k\geq1$ be even, $a_k\in \widetilde A_k$ and $b_k\in \widetilde{B}_k$ as described in (\ref{rule.even}). Then
	\begin{enumerate}
		\item \label{item:OverlappingWords_1} two words of the same type $a_k$ can only overlap on their initial and terminal segment, that is, the segment $a_{k-1}^I$ of one of the two words overlaps the segment $a_{k-1}^T$ of the other word $a_k$;
		
		\item \label{item:OverlappingWords_2}  on the other hand, two words of the same type $b_k$ overlaps itself exactly on a power of $b_{k-1}$ or they have an overlap of length $\ell_{k-2}$ between $b_{k-1}^I$ and $b_{k-1}^T$.
	\end{enumerate}
\end{lemma}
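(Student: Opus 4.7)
The two items will be proved separately by careful case analysis using the rigid block structure of $a_k$ and $b_k$ prescribed by rule \eqref{rule.even}. Throughout, I will use the fact, established by a straightforward induction from $a_0=01$ and $b_0=02$, that every $a_j$ and every $b_j$ begins with the symbol $0$.

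\emph{Item 1 (the word $a_k$).} Since $a_k = a_{k-1}\cdot 1^{(N_k-2)\ell_{k-1}}\cdot a_{k-1}$, the symbol $0$ occurs in $a_k$ only inside the two boundary $a_{k-1}$-segments, while the middle block consists exclusively of $1$'s. Suppose two copies of $a_k$ overlap with some shift $s\in(0,\ell_k)$, and split according to the size of $s$. If $s\leq(N_k-2)\ell_{k-1}$, then position $(N_k-1)\ell_{k-1}+1$ of copy~1, which is the first character of its terminal $a_{k-1}^T$ and therefore equal to $0$, maps to position $(N_k-1)\ell_{k-1}+1-s$ of copy~2, a position lying in $[\ell_{k-1}+1,(N_k-1)\ell_{k-1}]$ and hence squarely in the middle $1$-block; this forces $0=1$, a contradiction. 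If $(N_k-2)\ell_{k-1}<s<(N_k-1)\ell_{k-1}$, setting $r:=s-(N_k-2)\ell_{k-1}\in(0,\ell_{k-1})$, the positions $[s+1,s+\ell_{k-1}]$ of copy~1 straddle the tail of the middle $1$-block and the head of $a_{k-1}^T$, hence read $1^{\ell_{k-1}-r}\cdot a_{k-1}[1..r]$; equating this with the initial $a_{k-1}$-segment of copy~2 forces the first character of $a_{k-1}$ to be $1$, contradicting $a_{k-1}[1]=0$. Having ruled out both ranges, the only overlaps have $s\geq(N_k-1)\ell_{k-1}$, so the overlap has length at most $\ell_{k-1}$ and sits on $a_{k-1}^I$ of one copy versus $a_{k-1}^T$ of the other.

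\emph{Item 2 (the word $b_k$).} Since $b_k=(b_{k-1})^{N_k}$ is $\ell_{k-1}$-periodic, I write any candidate shift as $s=q\ell_{k-1}+r$ with $0\leq r<\ell_{k-1}$. The case $r=0$ is the trivial "power of $b_{k-1}$" overlap. If $r>0$ and $s\leq(N_k-1)\ell_{k-1}$, then the overlap length $\ell_k-s\geq\ell_{k-1}$ spans at least one full period, and the identity $b_k[j]=b_k[s+j]$ forces $b_{k-1}[i]=b_{k-1}[((i+r-1)\bmod \ell_{k-1})+1]$ for every $i$, i.e.\ $b_{k-1}$ must be invariant under a non-trivial cyclic rotation. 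However, the structure $b_{k-1}=b_{k-2}\cdot 2^{(N_{k-1}-2)\ell_{k-2}}\cdot b_{k-2}$ (here $k-1$ is odd) concentrates all non-$2$ characters of $b_{k-1}$ in a single cyclically contiguous cluster of length $2\ell_{k-2}$, surrounded by a $2$-run of length $(N_{k-1}-2)\ell_{k-2}\geq 2\ell_{k-2}$; a direct inspection using $N_{k-1}\geq 4$ shows that no non-trivial cyclic rotation preserves this pattern, a contradiction. If instead $r>0$ and $s>(N_k-1)\ell_{k-1}$, the overlap has length $<\ell_{k-1}$ and sits entirely in the last $b_{k-1}$ block of copy~1 against the first $b_{k-1}$ block of copy~2; it therefore reduces to a self-overlap of $b_{k-1}$ with shift $s':=s-(N_k-1)\ell_{k-1}$. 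Applying the symmetric (odd-$k$) analogue of item~1 to $b_{k-1}$, proved by the same argument with $a$ and $b$ swapped, places the overlap inside the initial/terminal $b_{k-2}$-segments of $b_{k-1}$ with length at most $\ell_{k-2}$, as claimed.

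I expect the most delicate step to be the cyclic-period analysis of $b_{k-1}$: the key input is the inequality $(N_{k-1}-2)\ell_{k-2}\geq 2\ell_{k-2}$, which guarantees that the $2$-run is at least as long as the combined non-$2$ cluster and prevents any rotation from interchanging the two. All other steps are direct positional computations. The argument as a whole is organized as a joint induction across both parities of $k$, with base cases $k=0,1$ verified by inspection of $a_0=01$, $b_0=02$, $a_1=(01)^{N_1}$ and $b_1=02\cdot 2^{2(N_1-2)}\cdot 02$.
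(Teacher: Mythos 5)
Your proof is correct, and item 1 is essentially the paper's own argument: both proofs exhibit a position at which one copy must carry the symbol $0$ (the first letter of an $a_{k-1}$-segment, using that every $a_j$ and $b_j$ begins with $0$) while the other copy carries a $1$ from the marker block, and your two ranges of the shift cover $0<s<(N_k-1)\ell_{k-1}$ just as the paper's do. For item 2 your handling of the long-overlap case is genuinely different. The paper recycles the odd-index analogue of item 1: since two copies of $b_{k-1}$ can only overlap on their end segments, $b_{k-1}$ cannot occur as a proper internal factor of $b_{k-1}b_{k-1}$, so any overlap of two copies of $b_k$ containing a whole $b_{k-1}$-block forces the shift to be a multiple of $\ell_{k-1}$. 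You instead observe that an overlap of length at least $\ell_{k-1}$ makes the cyclic word $b_{k-1}$ invariant under rotation by $r=s\bmod\ell_{k-1}\neq0$ and exclude this from the placement of the $0$'s; the only point you leave as ``direct inspection'' is easily completed, and with exactly the inequality you single out: a nonempty rotation-invariant set of positions contains a full coset of $\langle\gcd(r,\ell_{k-1})\rangle$, whose consecutive points are at distance $\gcd(r,\ell_{k-1})\le\ell_{k-1}/2\le(N_{k-1}-2)\ell_{k-2}$ (here $N_{k-1}\ge4$ enters), so some $0$ would land in the all-$2$ marker run, a contradiction — a presentational sketch, not a gap. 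The short-overlap case is treated in both proofs via the odd analogue of item 1, and, like the paper, you really prove that a non-aligned overlap sits inside the terminal/initial $b_{k-2}$-segments with length at most $\ell_{k-2}$ (your ``at most'' is in fact the honest form of the conclusion, and it is all that is used later). In terms of trade-offs, the paper's route is more economical since it derives everything from item 1 applied to $b_{k-1}$, whereas your rotation argument is self-contained for the aligned case and makes explicit the underlying combinatorial fact — that $b_{k-1}$ is primitive — at the cost of one extra small verification.
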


\begin{proof}
{\it Proof of item (\ref{item:OverlappingWords_1}).} Consider a non-trivial shift $0 < s < \ell_{k}$ and a word $w \in \widetilde{\mathcal{A}}^{\llbracket 1, s+\ell_k \rrbracket}$ made of two overlapping $a_k$:
		\[
		a_k = w|_{\llbracket 1, \ell_k \llbracket}, \quad \widetilde a_k := w|_{s+\llbracket 1,\ell_k \rrbracket}, \quad \forall\, i \in \llbracket1,\ell_k\rrbracket, \ \widetilde a_k(s+i) = a_k(i).
		\]
		
		We assume first that $0 < s < \ell_{k-1}$. On the one hand $a_{k-1}^T$ of $a_k$ starts with the symbol $0$ at the index $i=(N_k-1)\ell_{k-1}+1$. On the other hand the symbol $1$ appears in $\widetilde a_k$ at the indexes in the range $\llbracket \widetilde i, \widetilde j \rrbracket := \llbracket s+\ell_{k-1}+1,s+(N_k-1)\ell_{k-1} \rrbracket$. Since $i \in \llbracket \widetilde i, \widetilde j \rrbracket$ we obtain a contradiction. 
		
		We assume next that $\ell_{k-1} \leq s < (N_k-1)\ell_{k-1}$. On the one hand the symbol $1$ appears in $a_k$ at the indexes in the range $\llbracket \widetilde i, \widetilde j \rrbracket := \llbracket \ell_{k-1}+1,(N_k-1)\ell_{k-1}\rrbracket$. On the other hand $\widetilde a_k$ starts with the symbol $0$ at the index $i=s+1$. We obtain again a contradiction.
		
		We conclude that $s$ should satisfy $s \geq (N_k-1)\ell_{k-1}$: two words of the form $a_k$ can only overlap  on their initial and terminal segments.
		
{\it Proof of item (\ref{item:OverlappingWords_2}).}  We notice that $k-1$ is odd and $b_{k-1}$ has the same structure as $a_k$ in the first item. Two words of the form $b_{k-1}$ only overlap on their initial and terminal segments.  Then $b_{k-1}$ cannot be a subword of the concatenation $c=b_{k-1}b_{k-1}$ of two words $b_{k-1}$ unless $b_{k-1}$ coincides with the first or the last $b_{k-1}$ in $c$. If $b_k$ and $\widetilde b_k$ overlap, either $\widetilde b_k$ has been shifted by a multiple of $\ell_{k-1}$, $s \in \{ \ell_{k-1}, 2 \ell_{k-1}, \ldots, (N'_k-1)\ell_{k-1} \}$. Note that $k-1$ is an odd number, and so $b_{k-1}$ has the same behavior of $a_k$ described in the previous item. Therefore, it is only possible to have an overlap of a word $b_{k-2}$ of length $\ell_{k-2}$ between $b_{k-1}^T$ and $\widetilde b_{k-1}^I$.\end{proof}

\begin{lemma}
	\label{lemma:OverlappingWordsPrime}
	Let $k\geq1$ be an even integer and $a_k'$ and $a_k''$ as described in (\ref{rule.even.prime}). Then the following holds:
	\begin{enumerate}	
		\item \label{item:OverlappingWordsPrime_1} two words of the same form $a'_k$ never overlap; the same is true for two words of the same form $a_k''$;
		
		\item \label{item:OverlappingWordsPrime_2} two words $a'_k$ and $a''_k$ overlap if and only if they overlap either partially on their marker or partially on their initial and terminal segments, respectively. 
	\end{enumerate}
\end{lemma}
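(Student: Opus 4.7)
The plan is to carry out a direct case analysis on the possible shift $s$ between the two copies, using the explicit block decompositions
$a'_k = a_{k-1}\cdot 1^{(N'_k-1)\ell_{k-1}}$ and $a''_k = 1^{(N'_k-1)\ell_{k-1}}\cdot a_{k-1}$
from~(\ref{rule.even.prime}). I will need only four positional facts about $a_{k-1}$ (with $k-1$ odd), each proved by a one-line induction starting from $a_0=01$ and the fact that every $a_j$ both begins with $a_{j-1}$ and ends with $a_{j-1}$: namely $a_{k-1}(1)=0$, $a_{k-1}(2)=1$, $a_{k-1}(\ell_{k-1}-1)=0$, and $a_{k-1}(\ell_{k-1})=1$. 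In other words, $a_{k-1}$ begins with $01$ and ends with $01$.

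For item~(\ref{item:OverlappingWordsPrime_1}), I assume for contradiction that two copies of $a'_k$ are placed with shift $s$, $0<s<\ell'_k$, and produce a single-symbol inconsistency in three subcases. If $s\geq\ell_{k-1}$, the first symbol of the right copy's $a_{k-1}$-block sits at position $s+1$ and reads $0$, whereas at the same position the left copy is already in its marker and reads $1$. If $s=1$, position $\ell_{k-1}$ reads $a_{k-1}(\ell_{k-1})=1$ from the left copy and $a_{k-1}(\ell_{k-1}-1)=0$ from the shifted copy. If $2\leq s<\ell_{k-1}$, position $\ell_{k-1}+s-1$ lies in the left copy's marker (value $1$) while the shifted copy reads $a_{k-1}(\ell_{k-1}-1)=0$. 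The penultimate-zero property of $a_{k-1}$ is the crucial structural input.

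For item~(\ref{item:OverlappingWordsPrime_2}) I prove both directions. The ``if'' direction follows from exhibiting an explicit consistent overlap in each of the two regimes: for any shift $s\in\llbracket\ell_{k-1},\ell'_k-1\rrbracket$ with $a'_k$ on the left, the overlap region $\llbracket s+1,\ell'_k\rrbracket$ lies entirely within the markers of both words (all symbols $1$), so the overlap is partial on markers; and for $s=(N'_k-1)\ell_{k-1}$ with $a''_k$ on the left, the terminal $a_{k-1}$ of $a''_k$ aligns exactly with the initial $a_{k-1}$ of $a'_k$, so the overlap is on initial/terminal segments. For the ``only if'' direction I consider the two relative orders. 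With $a'_k$ on the left, the first letter of $a''_k$'s $a_{k-1}$-block sits at position $s+(N'_k-1)\ell_{k-1}+1$; if $s<\ell_{k-1}$ this lies strictly inside $a'_k$'s marker, giving a $0{=}1$ contradiction, so $s\geq\ell_{k-1}$ and the overlap is on markers. With $a''_k$ on the left, the first letter of $a'_k$'s $a_{k-1}$-block sits at position $s+1$; if $s<(N'_k-1)\ell_{k-1}$ this lies strictly inside $a''_k$'s marker, again a $0{=}1$ contradiction, so $s\geq(N'_k-1)\ell_{k-1}$ and the overlap touches the initial/terminal $a_{k-1}$-segments of both words, which by definition is an overlap on initial/terminal segments.

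The main obstacle is the combinatorial bookkeeping at the boundary between each copy's $a_{k-1}$-block and its marker: one must choose the right witness position (the leading $0$ of the intruding $a_{k-1}$, or the penultimate $0$ of $a_{k-1}$) to extract a single-symbol contradiction. Once the correct witness is identified, the proof is short and elementary. Notably, no self-overlap analysis of $a_{k-1}$ is required, since item~(\ref{item:OverlappingWordsPrime_2}) only asks for the overlap to be classified as ``partial on initial/terminal segments'', not for the set of admissible shifts to be enumerated.
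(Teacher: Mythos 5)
Your treatment of item (\ref{item:OverlappingWordsPrime_1}) only deals with two copies of $a'_k$; the statement also asserts that two words of the form $a''_k$ never overlap, and your proposal never addresses that half. This is a genuine gap, and it is not a routine mirror image of the $a'_k$ case: all your contradictions hinge on the leading $0$ of $a_{k-1}$, whereas reading $a''_k$ from the right one first meets the final letter of $a_{k-1}$, which is $1$. Indeed, with the definition of overlap used in the paper (a terminal segment of one word coinciding with an initial segment of the other, of the same length), two copies of $a''_k$ \emph{do} admit a coincidence of length one: the last letter of $a''_k$ is the trailing $1$ of $a_{k-1}$, which matches the leading marker letter $1$. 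So for $a''_k$ one can only rule out overlaps of length at least $2$: if the shift $s$ satisfies $0<s<(N'_k-1)\ell_{k-1}$, the leading $0$ of the left copy's $a_{k-1}$-block, at position $(N'_k-1)\ell_{k-1}+1$, falls inside the right copy's marker; if $(N'_k-1)\ell_{k-1}\le s\le \ell'_k-2$, the penultimate $0$ of the left copy, at position $\ell'_k-1$ (here $a_{k-1}(\ell_{k-1}-1)=0$), falls inside the right copy's marker; the single surviving shift $s=\ell'_k-1$ is a one-letter coincidence $1=1$ that must either be excluded by convention or acknowledged (it is harmless for the counting in Lemma~\ref{lemma:FrequencyOf0}). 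The paper itself dispatches this half with ``a similar proof works for $a''_k$'', which glosses over exactly this point, but a blind proof has to say something about it, and a careful attempt would have surfaced the subtlety.

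Apart from that omission, your argument is correct and in places cleaner than the paper's. For two copies of $a'_k$, the paper handles the regime $0<s<\ell_{k-1}$ by first invoking the self-overlap analysis of $a_{k-1}$ (Lemma~\ref{lemma:OverlappingWords}) to force $s\ge\ell_{k-2}\ge2$ before producing the witness at position $s+\ell_{k-1}-1$; your separate $s=1$ witness at position $\ell_{k-1}$, using only that $a_{k-1}$ ends in $01$, removes that dependence. Your ``only if'' analysis in item (\ref{item:OverlappingWordsPrime_2}) coincides with the paper's, your added ``if'' direction (explicit consistent overlaps on the markers and on the aligned $a_{k-1}$-segments) is fine, and you are right that the paper's final appeal to Lemma~\ref{lemma:OverlappingWords}(2), which describes the fine structure of the $a_{k-1}$-on-$a_{k-1}$ overlap, is not needed for the lemma as stated.
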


\begin{proof}
{\it Proof of item~(\ref{item:OverlappingWordsPrime_1}).} We consider a non-trivial shift $0 < s < \ell'_k$ and two overlapping words of the form $a'_k$ shifted by $s$. Let $w \in {\widetilde{\mathcal{A}}}^{\llbracket 1, s+\ell'_k \rrbracket}$ such that
		\begin{displaymath}
		\dis a'_k = w|_{\llbracket 1, \ell'_k \rrbracket}, \quad \widetilde a'_k := w|_{s+\llbracket 1,\ell'_k \rrbracket}, \quad \forall\, i \in \llbracket1,\ell'_k\rrbracket, \ \widetilde a'_k(s+i) = a'_k(i).
		\end{displaymath}
		
		We assume first that $\ell_{k-1} \leq s < \ell'_{k}$. On the one hand, $\widetilde a'_k$ starts with the symbol $0$, $w(s+1)=0$; on the other hand, $w|_{\llbracket \ell_{k-1}+1, \ell'_k \rrbracket}$ contains only the symbol $1$. Since $s+1 \in \llbracket \ell_{k-1}+1, \ell'_k \rrbracket$ we obtain a contradiction. 
		
		We assume next that $0 < s < \ell_{k-1}$. We observe that $k-1$ is odd and the two initial segments $a^I_{k-1}$ of $a'_k$ and $\widetilde a'_k$ are of the same form as $b_k$ in the second item. They overlap on a multiple of words of the form $a_{k-2}$ or at their initial and terminal segments. Necessarily $s \geq l_{k-2}\geq2$. On the one hand, the initial segment of $\widetilde a'_k$ ends with the symbols $01$, $w(s+\ell_{k-1}-1)=0$, on the other hand, $w|_{\llbracket\ell_{k-1}+1,\ell'_k \rrbracket}$ contains only the symbol $1$. Since $s+\ell_{k-1}-1 \in \llbracket \ell_{k-1}+1, \ell'_k \rrbracket$ we obtain a contradiction.  A similar proof works for $a''_k$ instead of $a'_k$.
		
{\it Proof of item (\ref{item:OverlappingWordsPrime_2}).} We divide our discussion in two cases. Consider first,
		\begin{displaymath}
		\dis a'_k = w|_{\llbracket 1, \ell'_k \rrbracket}, \quad \widetilde a''_k := w|_{s+\llbracket 1,\ell'_k \rrbracket}, \quad \forall\, i \in \llbracket1,\ell'_k\rrbracket, \ \widetilde a''_k(s+i) = a''_k(i).
		\end{displaymath}
		Suppose that $0 < s < \ell_{k-1}$. On the one hand the terminal segment of $\widetilde a''_k$ is a word like $a_{k-1}$ and then it starts with the symbol $0$ which appears in $w$ at the index $s+(N'_k-1)\ell_{k-1} \in \llbracket \ell_{k-1},\ell'_k\rrbracket$. On the other hand $w|_{\llbracket \ell_{k-1}, \ell'_k \rrbracket}$ contains only the symbol $1$. Thus we obtain a contradiction. We conclude that necessarily $\ell_k \leq s$ and the two words $a'_k$ and $a''_k$ overlap (partially or completely) on their markers.
		
		We consider next the case,
		\begin{displaymath}
		\dis a''_k = w|_{\llbracket 1, \ell'_k \rrbracket}, \quad \widetilde a'_k := w|_{s+\llbracket 1,\ell'_k \rrbracket}, \quad \forall\, i \in \llbracket1,\ell'_k\rrbracket, \ \widetilde a'_k(s+i) = a'_k(i).
		\end{displaymath}
		Suppose that $0 < s < (N'_k-1)\ell_{k-1}$. On the one hand the initial segment of $\widetilde a'_k$ starts with the symbol $0$ which is located at the index $s+1 \in \llbracket 1, (N'_k-1)\ell_{k-1} \rrbracket$ in $w$. On the other hand $w|_{\llbracket 1,(N'_k-1)\ell_{k-1}\rrbracket}$ is the marker of $a''_k$ and contains only the symbol $1$. We obtain a contradiction. We conclude that it is only possible to have $s \geq  (N'_k-1)\ell_{k-1}$, which means that the terminal segment of $a''_k$ overlaps with the initial segment of $a'_k$. Both segments are copies of $a_{k-1}$ and as we are in the case where $k\geq 2$ is even, we have that $k-1$ is odd and thus $a_{k-1}$ has the same behavior described in Lemma~\ref{lemma:OverlappingWords} item (2). Therefore the possible overlap can occur (partially or completely) on their initial and terminal segments by the rules described as in Lemma~\ref{lemma:OverlappingWords} item (2).
\end{proof}

\subsection{The vertically aligned subshift}

We estimate the entropy of the Gibbs measure $\mu_{\beta_k}$ from above in Lemma \ref{lemma:EntropyEstimate_2} by measuring the frequency of the duplicated  symbol $0' ,0''$. If $k$ is even  most of the symbols $0$ are in the words $b_k\in \widetilde B_k$. As $b_k$ does not use the symbol $1$, a positive frequency of the symbol $1$ for a generic $\mu_{\beta_k}$-configuration tends to decrease the occurrence of the words $b_k$. The purpose of this section is to quantitatively justify this intuition.

\begin{definition}
Let $\widetilde{\widetilde A}'_{k}\subseteq\widetilde{\mathcal{A}}^{\llbracket 1,\ell'_k\rrbracket^2}$ be the bidimensional dictionary of length $\ell'_k$ of vertically aligned patterns that project onto $\widetilde L'_k$, formally defined as
\begin{displaymath}
\widetilde{\widetilde A}'_{k} := \big\{p \in \widetilde{\mathcal{A}}^{\llbracket 1, \ell'_k \rrbracket^2} : \exists \widetilde p \in  \widetilde A'_k, \ \text{s.t.} \ \ \forall, (i,j) \in\llbracket 1, \ell'_k \rrbracket^2, \ p(i,j) = \widetilde p(i) \big\}.
	\end{displaymath}
The dictionary $\widetilde{\widetilde B}'_k \subseteq\widetilde{\mathcal{A}}^{\llbracket 1,\ell'_k\rrbracket^2}$ is defined analogously. Let $\widetilde{\widetilde X}$ be the set of vertically aligned configurations that project onto $\widetilde X$
\[
\widetilde{\widetilde X} := \big\{ x \in \widetilde{\mathcal{A}}^{\mathbb{Z}^2} : \exists \widetilde x \in \widetilde X,  x(i,j) = \widetilde x(i) \mbox{ for every } (i,j) \in \mathbb{Z}^2 \big\}.
\]

We use the notation $\widetilde{\widetilde \pi}
\colon \widetilde{\widetilde X} \to \widetilde X$ or $\widetilde{\widetilde \pi} \colon \widetilde{\widetilde A}'_k \to \widetilde A'_k$ to represent the projection of a vertically aligned configuration or pattern.
\end{definition}

Let $p \in \widetilde{\mathcal{A}}^{\llbracket 1,n\rrbracket^2}$ be a large pattern (not necessarily vertically aligned) and consider the set of translates $u$ of small squares of size $2\ell'_k$ inside this pattern $p$ that are vertically aligned and project onto a pattern of ${\widetilde A}'_k$ or ${\widetilde B}'_k$.  We introduce the following notations.

\begin{definition} \label{Definition:IJK}
Let $k \geq 2$,  $n > 2\ell'_k$, and  $p \in \widetilde{\mathcal{A}}^{\llbracket 1,n\rrbracket^2}$. We define
\begin{enumerate}
\item \label{Item:IJK_1}  $\displaystyle I(p,\ell'_k):=\left\{u\in\llbracket 0,n-2\ell'_k\rrbracket^2:\sigma^u(p)|_{\llbracket 1,2\ell'_k\rrbracket^2}\in\mathcal{L}(\widetilde{\widetilde X}, 2\ell'_k)\right\}$,
\item \label{Item:IJK_2}  $\displaystyle I^A(p,\ell'_k):=\left\{u\in\llbracket 0,n-\ell'_k\rrbracket^2:\sigma^u(p)|_{\llbracket 1,\ell'_k \rrbracket^2}\in \widetilde{\widetilde A}'_{k}\right\}$,
\item \label{Item:IJK_3}  $\displaystyle J^A(p,\ell'_k):=\bigcup_{u\in I^A(p,\ell'_k)}\left(u+\llbracket 1,\ell'_k \rrbracket^2\right)$.
\end{enumerate}
We define $I^B(p,\ell'_k)$ and $J^B(p,\ell'_k)$ similarly with replacing $\widetilde{\widetilde A}_{k}'$ for $\widetilde{\widetilde B}'_{k}$ in (\ref{Item:IJK_2}) and (\ref{Item:IJK_3}), respectively.
\end{definition}

\begin{lemma}
\label{lemma:AdmissibilityIntermediateScale}
Let $k \geq 2$,  $n > 2\ell'_k$,  $p \in \widetilde{\mathcal{A}}^{\llbracket 1,n\rrbracket^2}$ and $ I(p,\ell'_k),J^A(p,\ell'_k),J^B(p,\ell'_k)$  as in Definition \ref{Definition:IJK}. Let $\tau'_k=:(\ell'_k,\ell'_k)\in\mathbb{N}^2$. Then $J^A(p,\ell'_k)\cap J^B(p,\ell'_k)=\varnothing$ and
\[ 
\tau'_k + I(p,\ell'_k) \subset J^A(p,\ell'_k) \sqcup J^B(p,\ell'_k).  
\]
\end{lemma}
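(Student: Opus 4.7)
The lemma has two parts: disjointness $J^A\cap J^B=\varnothing$, and covering $\tau'_k+I\subseteq J^A\cup J^B$. I would treat them separately, with part one reducing to the earlier non-overlapping lemma, and part two to the structural fact that configurations of $\widetilde{X}$ decompose into $\widetilde{L}'_k$-blocks so that any window of length $2\ell'_k$ contains at least one complete block meeting its centre.

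\smallskip
For the disjointness, I would argue by contradiction: assume $(i,j)\in J^A\cap J^B$, so there are $u^A\in I^A(p,\ell'_k)$ and $u^B\in I^B(p,\ell'_k)$ with the two $\ell'_k\times\ell'_k$ sub-patterns at $u^A+\llbracket 1,\ell'_k\rrbracket^2$ and $u^B+\llbracket 1,\ell'_k\rrbracket^2$ both meeting at $(i,j)$. The first is vertically aligned projecting to some $\widetilde{w}^A\in\widetilde{A}'_k$, the second to $\widetilde{w}^B\in\widetilde{B}'_k$. Writing $s=u^B_1-u^A_1$, the horizontal overlap forces $|s|<\ell'_k$; as the vertical overlap is non-empty, the two vertically aligned squares must agree on each column of the horizontal overlap, giving (after permuting $A$ and $B$ if needed) the relation $\widetilde{w}^A(s+i')=\widetilde{w}^B(i')$ for $i'\in\llbracket 1,\ell'_k-|s|\rrbracket$. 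If $s=0$ this says $\widetilde{w}^A=\widetilde{w}^B$, impossible because $\widetilde{A}'_k\subseteq\widetilde{\mathcal{A}}_1^{\ell'_k}$, $\widetilde{B}'_k\subseteq\widetilde{\mathcal{A}}_2^{\ell'_k}$ and neither contains $0^{\ell'_k}$. If $s\neq 0$ this is a non-trivial overlap of a word of $\widetilde{A}'_k$ with a word of $\widetilde{B}'_k$, which is ruled out by Lemma~\ref{lemma:NoOverlappingAB}.

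\smallskip
For the covering, take $v=\tau'_k+u$ with $u\in I(p,\ell'_k)$. By definition the sub-pattern of $p$ at $u+\llbracket 1,2\ell'_k\rrbracket^2$ is the restriction of the vertical extension of some $\widetilde{x}\in\widetilde{X}$, with horizontal window $\widetilde{x}|_{\llbracket u_1+1,u_1+2\ell'_k\rrbracket}$. Since $\widetilde{X}\subseteq\langle\widetilde{L}_k\rangle$ and every word of $\widetilde{L}_k$ is a concatenation of $N_k/N'_k\geq 2$ words of $\widetilde{L}'_k$, we have $\widetilde{x}\in\langle\widetilde{L}'_k\rangle$, so $\widetilde{x}$ admits a block decomposition into $\widetilde{L}'_k$-blocks at some fixed offset. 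A window of length $2\ell'_k$ necessarily contains a full such block covering the middle coordinate $u_1+\ell'_k$; denote its starting position $m+1$ with $u_1\leq m\leq u_1+\ell'_k$. Setting $u'=(m,u_2)$, the square $u'+\llbracket 1,\ell'_k\rrbracket^2$ sits inside $u+\llbracket 1,2\ell'_k\rrbracket^2$, inherits vertical alignment, and projects to a word of $\widetilde{L}'_k=\widetilde{A}'_k\sqcup\widetilde{B}'_k$. Thus $u'\in I^A(p,\ell'_k)\cup I^B(p,\ell'_k)$ and $v\in u'+\llbracket 1,\ell'_k\rrbracket^2\subseteq J^A(p,\ell'_k)\cup J^B(p,\ell'_k)$; combined with the disjointness this gives $v\in J^A\sqcup J^B$.

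\smallskip
The main subtlety is the first step: one must convert the two-dimensional overlap of the $\ell'_k\times\ell'_k$ squares into a one-dimensional overlap of their horizontal projections, and in particular realize that the $s=0$ case is excluded not directly by Lemma~\ref{lemma:NoOverlappingAB} (which assumes $s\neq 0$) but by the alphabet decomposition $\widetilde{\mathcal{A}}_1\cap\widetilde{\mathcal{A}}_2=\{0\}$ together with the explicit content of $\widetilde{A}'_k$ and $\widetilde{B}'_k$. Once these bookkeeping points are settled, the rest is direct from the construction of $\widetilde{L}'_k$ and the trivial fact that a segment of length $2\ell'_k$ cut into $\ell'_k$-blocks contains a block hitting any prescribed interior coordinate.
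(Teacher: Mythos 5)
Your proposal is correct and takes essentially the same route as the paper: disjointness of $J^A$ and $J^B$ is deduced from Lemma~\ref{lemma:NoOverlappingAB} (your separate treatment of the coincidence case $s=0$ via the alphabets $\widetilde{\mathcal{A}}_1,\widetilde{\mathcal{A}}_2$ is a detail the paper leaves implicit), and the covering follows, as in the paper, from $\langle\widetilde L_k\rangle\subseteq\langle\widetilde L'_k\rangle$ together with the observation that a vertically aligned window of size $2\ell'_k$ contains a complete $\widetilde L'_k$-block whose translate captures the centre $u+\tau'_k$. The only cosmetic difference is that you work with a full configuration $\widetilde x\in\widetilde X$ and its block decomposition, whereas the paper phrases the same step through subwords of concatenations of two words of $\widetilde L_k$.
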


\begin{proof}
The fact that $J^A(p,\ell'_k)$ and $J^B(p,\ell'_k)$ do not intersect is a consequence of Lemma \ref{lemma:NoOverlappingAB}. Let $u \in I(p,\ell'_k)$ and  $w_* = \sigma^u(p)|_{\llbracket 1,2\ell'_k \rrbracket^2}$. There exists $w\in \mathcal{L}(\langle \widetilde L_k \rangle,2\ell'_k)$  such that $w_*(i,j)=w(i)$ for all $(i,j) \in \llbracket 1,2\ell'_k \rrbracket^2$. By definition of $\langle \widetilde L_k \rangle$, $w \sqsubset w_1w_2$ is a subword of the concatenation of two words of $\widetilde L_k$. By Definition \ref{notation:BaseLanguageTer} we have that $\langle\widetilde{L}_k\rangle\subseteq \langle\widetilde{L}_k'\rangle$, and also $\mathcal{L}(\langle\widetilde{L}_k\rangle,2\ell_k')\subseteq \mathcal{L}(\langle\widetilde{L}_k'\rangle,2\ell_k')$.
	
On the other hand, a word in $\widetilde L_k$ is either a word of $\widetilde A_k$ or a word of $\widetilde B_k$. As $\langle \widetilde A_k \rangle \subset \langle \widetilde A'_k \rangle$ and $\langle \widetilde B_k \rangle \subset \langle \widetilde B'_k \rangle$, $w_1$ and $w_2$ are obtained as a concatenation of words of $\widetilde A'_k$ or $\widetilde B'_k$. There exists $0 \leq s < \ell'_k$ such that 
	\begin{displaymath}
	\dis \sigma^s(w)|_{\llbracket 1, \ell'_k \rrbracket} \in \widetilde A'_k \bigsqcup \widetilde B'_k.
	\end{displaymath}
Then
	\begin{displaymath}
	u+(s,s) \in I^A(p,\ell'_k) \bigsqcup I^B(p,\ell'_k),
	\end{displaymath}
and therefore
	\begin{displaymath}
	\dis u+ \tau'_k  \in J^A(p,\ell'_k) \bigsqcup J^B(p,\ell'_k).
	\end{displaymath}
	This concludes the proof. See Figure~\ref{fig:diagram1008} for an illustration of this result.
\end{proof}

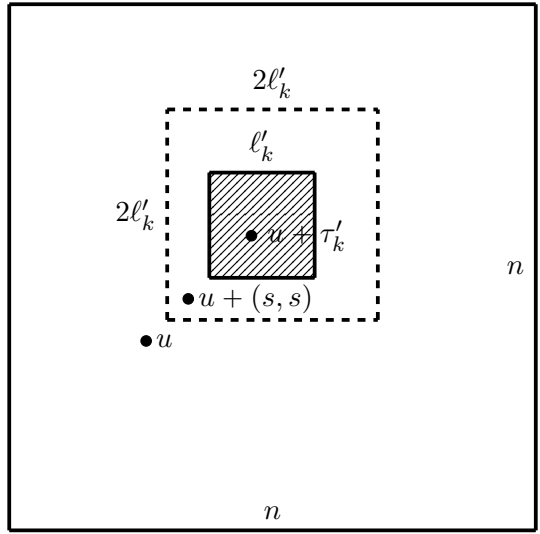
\begin{figure}[hbt]
\centering
\begin{tikzpicture}[scale=0.7]
\draw[line width = 0.5mm] (0,0) -- (0,10);
\draw[line width = 0.5mm] (0,0) -- (5,0) node[above] {$n$} -- (10,0) ;
\draw[line width = 0.5mm] (10,0) -- (10,5)  node[left] {$n$} -- (10,10);
\draw[line width = 0.5mm] (0,10) -- (10,10);
\draw[line width = 0.5mm, dashed] (3,4) -- (7,4);
\draw[line width = 0.5mm, dashed] (3,4) -- (3,6) node[left]{$2 \ell'_k$} -- (3,8);
\draw[line width = 0.5mm, dashed] (3,8) -- (5,8) node[above]{$2 \ell'_k$} -- (7,8);
\draw[line width = 0.5mm, dashed] (7,4) -- (7,8);
\filldraw (4.6,5.6) circle (1mm)  node[right, xshift=0.08cm] {$u+\tau'_k$};
\filldraw (2.6,3.6) circle (1mm) node[right] {$u$};
\draw[line width = 0.5mm] (3.8,4.8) -- (3.8,6.8);
\draw[line width = 0.5mm] (3.8,4.8) -- (5.8,4.8);
\draw[line width = 0.5mm] (5.8,4.8) -- (5.8,6.8);
\draw[line width = 0.5mm] (3.8,6.8) -- (4.8,6.8) node[above]{$\ell'_k$} -- (5.8,6.8);
\draw[pattern = {north east lines}, pattern color = black] (3.8,4.8) rectangle (5.8,6.8);
\filldraw (3.4,4.4) circle (1mm) node[right] {$u+(s,s)$};
\end{tikzpicture}
\caption{The biggest square is the square pattern $p\in \widetilde{\mathcal{A}}^{\llbracket 0,n\rrbracket^2}$. Let $u\in I(p,\ell'_k)$. The dashed square of size $2\ell'_k$ is a pattern in   $\mathcal{L}(\widetilde{\widetilde X}, 2\ell'_k)$. The pattern located in the innermost square of size $\ell'_k$ belongs to $\widetilde A'_k \bigsqcup \widetilde B'_k$. The innermost dot represents $u+\tau'_k$.} \label{fig:diagram1008}
\end{figure}


\begin{lemma}
	\label{lemma:FrequencyOf0}
	Let $k\geq2$ be an even integer, $n>2\ell'_k$, and $ p \in \widetilde{\mathcal{A}}^{\llbracket 1,n \rrbracket^2}$. Let $I^A(p,\ell'_k)$, $J^A(p,\ell'_k)$, $I^B(p,\ell'_k)$, $J^B(p,\ell'_k)$ as in Definition \ref{Definition:IJK}. Define
	\begin{equation*}
	\label{eq.KA}
	K^A(p,\ell'_k) = \big\{ v \in J^A(p,\ell'_k) : p(v) = 0 \big\}, \ \ K^B(p,\ell'_k) = \big\{ v \in J^B(p,\ell'_k) : p(v) = 0 \big\}.
	\end{equation*}
	Then
	\begin{enumerate}
		\item $\displaystyle \Card(K^B(p,\ell'_k)) \leq \big(1-N_{k-1}^{-1}\big)^{-1} \Card(J^B(p,\ell'_k)) f_{k-1}^B$,
		\item $\displaystyle\Card(K^A(p,\ell'_k)) \leq \frac{2}{N'_k}\Card(J^A(p,\ell'_k))f_{k-1}^A$.
	\end{enumerate}
\end{lemma}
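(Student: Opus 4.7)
The plan is to stratify each of $I^A$ and $I^B$ according to the projected word $\widetilde{p}_u \in \widetilde{L}'_k$ and to apply the overlap lemmas \ref{lemma:OverlappingWords} and \ref{lemma:OverlappingWordsPrime} to each stratum separately. Within any cube projecting to $a'_k$, $a''_k$, or $b'_k$ the configuration is vertically constant, so the $0$-cells form full columns of height $\ell'_k$; the task reduces to estimating what fraction of the cells of $J^A$ (respectively $J^B$) these $0$-columns can occupy.

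For item (2), the $A$-case, I would decompose $I^A = I^{A'} \sqcup I^{A''} \sqcup I^{A_1}$ according to whether $\widetilde{p}_u$ equals $a'_k$, $a''_k$, or $1^{\ell'_k}$; only the first two strata contribute to $K^A$. Writing $J^{A'}$ and $J^{A''}$ for the corresponding unions of $\ell'_k \times \ell'_k$ squares and invoking Lemma~\ref{lemma:OverlappingWordsPrime}(\ref{item:OverlappingWordsPrime_1}), two $a'_k$-cubes at distinct horizontal positions must be horizontally separated by at least $\ell'_k$. Grouping the cubes of $I^{A'}$ by horizontal position therefore yields pairwise disjoint horizontal strips, and inside each such strip the $0$-columns occupy exactly the $\rho^A_{k-1}$ positions of $a_{k-1}$ out of $\ell'_k$ total columns, across every row covered. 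A direct count gives $\Card(K^{A'}) = (\rho^A_{k-1}/\ell'_k)\,\Card(J^{A'})$ and, symmetrically, $\Card(K^{A''}) = (\rho^A_{k-1}/\ell'_k)\,\Card(J^{A''})$. Combining $\Card(K^A) \leq \Card(K^{A'}) + \Card(K^{A''})$ with $J^{A'}, J^{A''} \subseteq J^A$ then produces the desired bound; the factor $2$ is exactly the cost of possibly having both types contribute simultaneously, which is precisely what Lemma~\ref{lemma:OverlappingWordsPrime}(\ref{item:OverlappingWordsPrime_2}) permits.

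For item (1), the $B$-case, only the word $b'_k = (b_{k-1})^{N'_k}$ carries zeros, and its $0$-columns form an $\ell_{k-1}$-periodic pattern with $\rho^B_{k-1}$ zeros per period, so the internal $0$-density of any $b'_k$-cube equals exactly $f^B_{k-1}$. Using the analogue of Lemma~\ref{lemma:OverlappingWords}(\ref{item:OverlappingWords_2}) for $b'_k$, which follows from the inner structure $b_{k-1} = b_{k-2}\,2^{(N_{k-1}-2)\ell_{k-2}}\,b_{k-2}$, two $b'_k$-cubes at distinct horizontal positions can overlap only at shifts that are multiples of $\ell_{k-1}$, in which case their $0$-column patterns align perfectly, or at an exceptional shift whose overlap reduces to a single $b_{k-2}$-block of length $\ell_{k-2}$. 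I would then proceed row by row: for each $r$, the horizontal section $\{c : (c,r) \in J^B\}$ is a finite union of intervals, each compatible with a common $\ell_{k-1}$-periodic scaffold placing $\rho^B_{k-1}$ zeros per full period. On every full $\ell_{k-1}$-block the $0$-density equals $f^B_{k-1}$, while the only possible over-density occurs inside one truncated $\ell_{k-1}$-block at the boundary of each interval, where at most $\rho^B_{k-1}$ zeros can appear in $(N_{k-1}-1)\ell_{k-2}$ cells, giving the multiplicative correction $N_{k-1}/(N_{k-1}-1) = (1 - N_{k-1}^{-1})^{-1}$. Summing these row-by-row inequalities yields the claimed bound.

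The main obstacle lies in this second half: rigorously justifying that all $b'_k$-cubes sharing a common row indeed fit into a single $\ell_{k-1}$-periodic scaffold, up to a boundary defect of at most one $\ell_{k-2}$-block. This requires a careful case analysis of all admissible mutual shifts, combined with the consistency that the pattern $p$ imposes on the overlap, exploiting the fact that any two $b'_k$-cubes sharing a horizontal range must agree on the exact placement of the $b_{k-2}$-blocks within that range.
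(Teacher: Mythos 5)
Your treatment of item (2) is essentially sound and close in spirit to the paper's (which clusters the $a'_k$/$a''_k$ cubes into rectangles and uses that every cell lies in at most two of them to produce the factor $2$); one small slip: two $a'_k$-cubes at distinct horizontal offsets need \emph{not} be horizontally separated by $\ell'_k$ — they can be horizontally close provided their row ranges are disjoint. What Lemma~\ref{lemma:OverlappingWordsPrime}(\ref{item:OverlappingWordsPrime_1}) actually gives, via any shared row, is that two same-type cubes with distinct horizontal offsets cannot share a single cell, and that weaker statement is exactly what your disjoint-strata count needs, so the $A$-estimate goes through.

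The genuine gap is in item (1), and it is precisely the step you flag: the claim that all $b'_k$-cubes meeting a given row fit one $\ell_{k-1}$-periodic scaffold ``up to a boundary defect of at most one $\ell_{k-2}$-block'' per interval is not only unproven, it is false as stated. The exceptional $\ell_{k-2}$-overlaps can be \emph{chained}: occurrences of $b'_k$ at positions $0$, $\ell'_k-\ell_{k-2}$, $2(\ell'_k-\ell_{k-2})$, \dots are mutually consistent (the terminal $b_{k-2}$ of one glues to the initial $b_{k-2}$ of the next), and nothing in the hypotheses on $p$ (an arbitrary pattern) excludes this; a single connected component of a row section can therefore contain many scaffold changes, not one. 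The estimate nevertheless survives with a different local accounting: order the occurrences in a component from left to right and peel them off one at a time; a scaffold-aligned overlap (multiple of $\ell_{k-1}$) adds cells of density exactly $f^B_{k-1}$, while an exceptional junction adds at most $N'_k\rho^B_{k-1}$ zeros over at least $\ell'_k-\ell_{k-2}$ new cells, and
\[
\frac{N'_k\,\rho^B_{k-1}}{N'_k\,\ell_{k-1}-\ell_{k-2}} \;\leq\; \frac{\rho^B_{k-1}}{\ell_{k-1}-\ell_{k-2}} \;=\; \bigl(1-N_{k-1}^{-1}\bigr)^{-1} f^B_{k-1},
\]
so each peeled piece, hence each row, obeys the claimed bound. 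Either this repaired peeling argument, or the paper's route — which avoids row slicing altogether by partitioning $J^B$ into the successive set differences $J_h^*=J_h\setminus(J_1\cup\cdots\cup J_{h-1})$ and decomposing each $J_h^*$ into $N'_k$ vertical strips, of which only the two extreme ones are truncated, by at most $\ell_{k-2}$, thanks to Lemma~\ref{lemma:OverlappingWords}(\ref{item:OverlappingWords_2}) — is needed to close item (1).
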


\begin{proof}
	Let $k\geq 2$ even, $n>2\ell_k'$ and a fixed $p\in\widetilde{\mathcal{A}}^{\llbracket 1,n \rrbracket^2}$. To simplify the notations, we write $I^A= I^A(p,\ell'_k)$, $J^A=J^A(p,\ell'_k)$ and so on. As the symbol $0$ does not appear in the markers $1^{N'_k\ell_{k-1}}\in \widetilde{A}_k'$ and $2^{N'_k\ell_{k-1}}\in\widetilde{B}_k'$, we only need to consider translates  $u \in \llbracket 0, n-\ell'_k \rrbracket^2$ of $I^A$ (resp. $I^B$) such that $w_*=\sigma^u(p)|_{\llbracket 1,\ell'_k\rrbracket^2}$ satisfying $\widetilde{\widetilde \pi}(w_*)\in\{a'_k,a''_k\}$ (resp. $\widetilde{\widetilde \pi}(w_*)=b'_k$).
	
	{\it Item 1.} We first enumerate $I^B = \{u_1,u_2,\ldots,u_H\}$.  Let $u_h= (u_h^x,u_h^y) \in\mathbb{Z}^2$. Let 
	\begin{displaymath}
	\dis J^B:=\bigcup_{h=1}^H J_h\quad\text{where}\quad J_h:=u_h+\llbracket 1,\ell'_k\rrbracket^2, \ \  \widetilde{\widetilde\pi}(\sigma^{u_h}(p))|_{\llbracket1,\ell'_k\rrbracket^2}=b'_k,
	\end{displaymath}
	that is, we are only considering the $J_h$ squares of $J^B(p,\ell_k')$ that contains vertically aligned word $b_k'$. For each box $J_h$, we divide it into $N_k'$ vertical strips of length $\ell_{k-1}$. Formally we have
	\begin{displaymath}
	\dis J_h=\bigcup_{i=1}^{N'_k}J_{h,i}\quad\text{where}\quad J_{h,i}:=u_h+\llbracket 1+(i-1)\ell_{k-1},i\ell_{k-1}\rrbracket\times\llbracket1,\ell'_k \rrbracket.
	\end{displaymath}
We construct a partition of $J^B$  inductively by,
	\begin{gather*}
	J^B=\bigsqcup_{h=1}^H J_h^*,\quad J_1^*= J_1, \ \ \forall\, h \geq2, \ J_h^*:=J_{h}\setminus\left(J_1\cup\cdots\cup J_{h-1}\right).
	\end{gather*}
	Let
	\begin{displaymath}
	\dis K_h^*:=\{v\in J_h^*:p(v)=0\}, \ K^B:=\bigsqcup_{h=1}^H K_h^*.
	\end{displaymath}
	It will be enough to show that for every $h \in \llbracket 1,H \rrbracket$
	\begin{equation}
	\label{eq.Card.Kh.B}
	\dis \Card( K_h^*) \leq \big(1-N_{k-1}^{-1}\big)^{-1}  \Card(J_h^*) f_k^B.
	\end{equation}
	By definition of $u_h$, $\widetilde w_h = \widetilde{\widetilde \pi}(p|_{(u_h+\llbracket 1, \ell'_k \rrbracket^2)})$= $b'_k\in\widetilde{\mathcal{A}}^{\ell_k'}$,
	\begin{displaymath}
	\dis \forall\,i,j\in \llbracket 1,\ell_k\rrbracket^2, \ \widetilde{w}_h(u_h^x+i)=b'_k(i).
	\end{displaymath}
	Since $b'_k$ can be decomposed into $N'_k$ subwords of the form $b_{k-1}$, we denote by $\widetilde{w}_{h,i}\in\widetilde{\mathcal{A}}^{\ell_{k-1}}$ the successive subwords for every $1\leq i\leq N'_k$. Formally,
	\begin{displaymath}
	\dis \widetilde w_{h,i}:=\widetilde w_h|_{(u_h^x+\llbracket 1+(i-1)\ell_{k-1},i\ell_{k-1}\rrbracket)} \mbox{ and } \sigma^{u_h^x+(i-1)\ell_{k-1}}(\widetilde w _{h,i}) = b_{k-1}.
	\end{displaymath}
	
	Consider now a fixed position $h$. We will show that $J_h^*$ is equal to a disjoint union of $N'_k$ vertical strips $(J_{h,i}^*)_{i=1}^{N'_k}$ of the following forms:
	\begin{itemize}
		\item the initial strip $J_{j,1}^*$,
		\begin{displaymath}
		u_h+\left(\llbracket 1+\ell_{k-2},\ell_{k-1}\rrbracket\times\llbracket c_{h,1}, d_{h,1} \rrbracket\right) \subseteq  J_{j,1}^* \subseteq \left( u_h+\llbracket 1,\ell_{k-1} \rrbracket \right) \times \llbracket c_{h,1}, d_{h,1} \rrbracket;
		\end{displaymath}
		\item  the intermediate strips, $J_{h,i}^*$, $1<i<N'_k$,
		\begin{displaymath}
		J_{h,i}^* = u_h+\left(\llbracket(i-1)\ell_{k-1}+1,i\ell_{k-1}\rrbracket\times\llbracket c_{h,i}, d_{h,i}\rrbracket\right),
		\end{displaymath}
		\item the terminal strip $J_{h,N'_k}^*$,
		\begin{multline*}
		u_h+\left(\llbracket 1+(N'_k-1)\ell_{k-1},\ell_k-\ell_{k-2} \rrbracket \times \llbracket c_{h,N'_k}, d_{h,N'_k} \rrbracket\right) \subseteq \\
		\subseteq  J_{h,N'_k}^* \subseteq 
		u_h+\left(\llbracket 1+(N'_k-1)\ell_{k-1},\ell'_k \rrbracket\times\llbracket c_{h,N'_k}, d_{h,N'_k} \rrbracket\right).
		\end{multline*}
	\end{itemize}
	Here for each $i\in\llbracket1,N_k'\rrbracket$, the values $1 \leq c_{h,i},d_{h,i}\leq \ell_k'$ are integers that represent the vertical length of each strip. Note that it possible that $c_{h,i} > d_{h,i}$, which denotes an empty strip $J_{h,i}^*$.
	
	Indeed, for a fixed $1 \leq i \leq N'_k$, we first consider the previous $J_g$, $1 \leq g < h$, that intersects the strip $J_{h,i}$   so that the word $\widetilde w_g$ overlaps $\widetilde w_h$ on a power of $b_{k-1}$ (see item (2) of Lemma \ref{lemma:OverlappingWords}). Then $c_{h,i}$ is the largest upper level of those $J_g \cap J_{h,i}$, more precisely,
	\begin{equation}
	\label{eq.c_hi}
	\dis c_{h,i} = \max_g \big\{ u_g^y + \ell_k' + 1 :  u_g^y \leq u_h^y, \big(u_h^x+(i-1) \ell_{k-1}+\llbracket 1, \ell_{k-1} \rrbracket \big) \subseteq \big( u_g^x + \llbracket 1, \ell_k' \rrbracket \big) \big\},
	\end{equation}
	and similarly $d_{h,i}$ is the smallest lower level of those $J_g \cap J_{h,i}$, formally we have
	\begin{equation}
	\label{eq.d_hi}
	\dis d_{h,i} =\min_g \big\{ u_g^y + 1 :  u_g^y \geq u_h^y, \big(u_h^x+(i-1) \ell_{k-1}+\llbracket 1, \ell_{k-1} \rrbracket \big) \subseteq \big( u_g^x + \llbracket 1, \ell_k' \rrbracket \big) \big\}.
	\end{equation}
	We have just constructed the intermediate strips $J_{h,i}^*$ for $1<i<N_k$, se Figure \ref{fig:diagram01}.
	
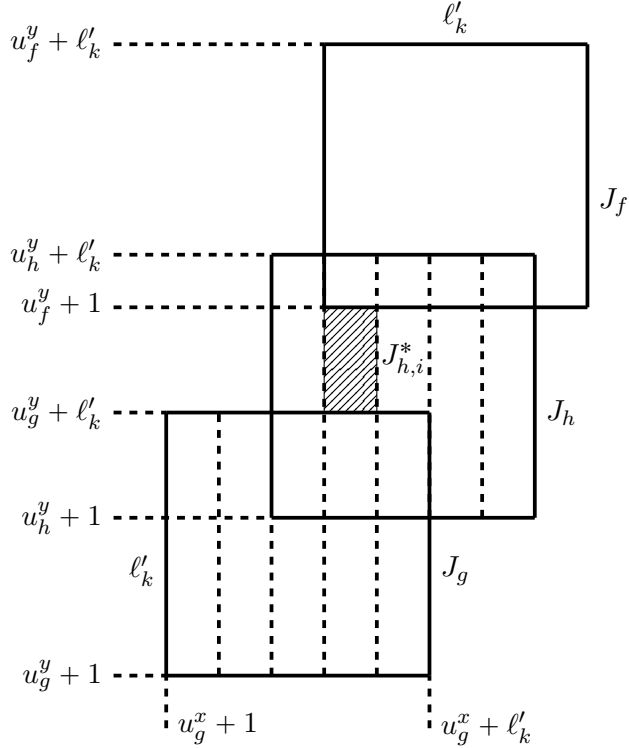
\begin{figure}[hbt]
\centering
\begin{tikzpicture}[scale=0.7]
\draw[line width = 0.5mm] (1,1)  -- (6,1);
\draw[line width = 0.5mm] (1,1) -- (1,3) node[left]{$\ell'_k$} -- (1,6);
\draw[line width = 0.5mm] (6,1) -- (6,3) node[right]{$J_g$} -- (6,6);
\draw[line width = 0.5mm] (1,6) -- (6,6);
\draw[line width = 0.5mm] (3,4) -- (8,4);
\draw[line width = 0.5mm] (3,4) -- (3,9);
\draw[line width = 0.5mm] (3,9) -- (8,9);
\draw[line width = 0.5mm] (8,4) -- (8,6) node[right]{$J_h$} -- (8,9);
\draw[line width = 0.5mm] (4,8) -- (9,8);
\draw[line width = 0.5mm] (4,8) -- (4,13);
\draw[line width = 0.5mm] (4,13) -- (6.5,13) node[above]{$\ell'_k$} -- (9,13);
\draw[line width = 0.5mm] (9,8) -- (9,10) node[right]{$J_f$} -- (9,13);
\draw[line width = 0.5mm, dashed] (4,4) -- (4,9);
\draw[line width = 0.5mm, dashed] (5,4) -- (5,7) node[right, xshift=-1mm]{$J_{h,i}^*$} -- (5,9);
\draw[line width = 0.5mm, dashed] (6,4) -- (6,9);
\draw[line width = 0.5mm, dashed] (7,4) -- (7,9);
\draw[line width = 0.5mm, dashed] (2,1) -- (2,6);
\draw[line width = 0.5mm, dashed] (3,1) -- (3,4);
\draw[line width = 0.5mm, dashed] (4,1) -- (4,4);
\draw[line width = 0.5mm, dashed] (5,1) -- (5,4);
\draw[pattern = {north east lines}, pattern color = black] (4,6) rectangle (5,8);
\draw[line width = 0.5mm, dashed] (0,1) node[left]{$u_g^y+1$} -- (1,1);
\draw[line width = 0.5mm, dashed] (0,4) node[left]{$u_h^y+1$} -- (3,4);
\draw[line width = 0.5mm, dashed] (0,6) node[left]{$u_g^y+\ell'_k$} -- (1,6);
\draw[line width = 0.5mm, dashed] (0,9) node[left]{$u_h^y+\ell'_k$} -- (3,9);
\draw[line width = 0.5mm, dashed] (0,8) node[left]{$u_f^y+1$} -- (4,8);
\draw[line width = 0.5mm, dashed] (0,13) node[left]{$u_f^y+\ell'_k$} -- (4,13);
\draw[line width = 0.5mm, dashed] (1,0) node[right]{$u_g^x+1$} -- (1,1);
\draw[line width = 0.5mm, dashed] (6,0) node[right]{$u_g^x+\ell'_k$} -- (6,1);
\end{tikzpicture}
\caption{We represent the intermediate strip $J^*_{j,i}$ in $J_h$ that is obtained after discarding $J_g$ and $J_f$ constructed before.}
\label{fig:diagram01}
\end{figure}
	
	We now construct the initial strip (the terminal strip is constructed similarly). We intersect the remaining $J_g$ with $J_{h,1}$. The terminal segment $b_{k-1}^T$ of $\widetilde w_g$ overlaps the initial segment $b_{k-1}^I$ of $\widetilde w_h$. By item (1) of Lemma \ref{lemma:OverlappingWords}, as $k-1$ is odd, $b_{k-1}$ has the same structure as $a_k$, and hence the overlapping can only happen at their end segments of the form $b_{k-2}$. We have just proved that $J_{h,1}^*$ contains a small strip $\big( u_h + \llbracket 1+\ell_{k-2},\ell_{k-1} \rrbracket \big) \times \llbracket c_{h,1}, d_{h,1} \rrbracket$ of base $b_{k-1}^I \setminus b_{k-2}$ and is included in a larger strip $\big( u_h+\llbracket 1,\ell_{k-1} \rrbracket \big) \times \llbracket c_{h,1},d_{h,1} \rrbracket$ of base $b_{k-1}$. For the initial and terminal strip the vertical extension ($\llbracket c_{h,1},d_{h,1}\rrbracket$ and $\llbracket c_{h,{N}_k'},d_{h,{N}_k'}\rrbracket$) of the elements $J_{h,1}^*$ and $J_{h,N_k'}^*$ are defined as in (\ref{eq.c_hi}) and (\ref{eq.d_hi}).

	Let $K_{h,i}^*:=\{v\in J_{h,i}^*:p_v=0\}$. We show that
	\begin{equation}\label{eq.K_hi}
	\Card( K_{h,i}^*) \leq  \big(1-N_{k-1}^{-1}\big)^{-1}  \Card( J_{h,i}^*) f_k^B \mbox{ for every }1 \leq i \leq N_k.
	\end{equation}
	
	For the intermediate strips $ J_{h,i}^*$, where $1 < i < N'_k$, we use the fact that $ J_{h,i}^*$ is a square strip of base $b_{k-1}$, and the fact that the frequency $f_{k-1}^B$ of the symbol $0$ in the word $b_{k-1}$ is identical to the frequency $f_k^B$ of the symbol $0$ in $b_k$. We have,
	\begin{displaymath}
	\Card(K_{h,i}^*) = \ell_{k-1}(d_{h,i}-c_{h,i}+1)  f_k^B =  \Card( J_{h,i}^*) f_k^B.
	\end{displaymath}
	
	For the initial strip $J_{h,1}^*$, we use the fact $J_{h,1}^*$ resembles largely a square strip of base $b_{k-1}$. We have,
	\begin{align*}
	\Card(\ K_{h,i}^*) &\leq  \ell_{k-1} (d_{h,1}-c_{h,1}+1) f_{k}^B \\
	&\leq \frac{\ell_{k-1}}{\ell_{k-1}-\ell_{k-2}} (\ell_{k-1}-\ell_{k-2}) (d_{h,1}-c_{h,1}+1)f_{k}^B \\
	&\leq \big(1-N_{k-1}^{-1}\big)^{-1} \Card( J_{h,i}^* ) f_k^B.
	\end{align*}
	We have proven (\ref{eq.K_hi}) and by summing over $i \in \llbracket 1, N'_k \rrbracket$ we have proven (\ref{eq.Card.Kh.B}).
	
	{\it Item 2.} As before we will consider $I^A$, but only consider the translates $u \in \llbracket 0, n-\ell'_k \rrbracket^2$ such that $\widetilde{\widetilde\pi}(\sigma^u(p)|_{\llbracket 1,\ell'_k\rrbracket^2})\in\{a'_k,a''_k\}$. If $J_g\cap J_h\neq\varnothing$, the two projected words $\widetilde w_g = \widetilde{\widetilde\pi}(\sigma^{u_g}(p)|_{\llbracket 1,\ell'_k \rrbracket^2})$ and $\widetilde w_h = \widetilde{\widetilde\pi}(\sigma^{u_h}(p)|_{\llbracket 1,\ell'_k \rrbracket^2})$ may coincide in three ways: either $u_g^x=u_h^x$ and $\widetilde w_g = \widetilde w_h$, or $\widetilde w_g$ and $\widetilde w_h$ intersect on their markers, or $\widetilde w_g$ and $\widetilde w_h$ intersect on their initial and terminal segments, as in Lemma \ref{lemma:OverlappingWordsPrime}.
	
	We redefine again $I^A$ by clustering into a unique rectangle formed by adjacent squares where the overlap occurs in the whole word, that is, we group the squares $J_g$ and $J_h$ that pairwise satisfy $J_g\cap J_h \not= \varnothing$, $u_g^x=u_h^x$, $\widetilde w_g = \widetilde w_h$ and $|u_g^y-u_h^y|< \ell'_k$. Then, after re-indexing $I^A$, one obtains,
	\begin{displaymath}
	\dis J^A = \bigcup_{h=1}^H J_h, \quad J_h =  u_h + \left( \llbracket 1, \ell'_k \rrbracket \times \llbracket 1, d_h \rrbracket \right),
	\end{displaymath}
	where $d_h$ is the final height of each rectangle obtained after the clustering. Thus $w_h^* = \sigma^{u_h}(p)|_{\llbracket 1, \ell'_k \rrbracket \times \llbracket 1, d_h \rrbracket}$ is a vertically aligned pattern whose projection $\widetilde w_h = \widetilde{\widetilde\pi}(w_h^*)$ is a word of the form  $a'_k$ or $a''_k$, and such that whenever $J_g \cap J_h \not= \varnothing$, $\widetilde w_g$ and $\widetilde w_h$ intersect at their initial and terminal segments, see Figure \ref{fig:diagram03}.

\begin{figure}[hbt]
\centering
\begin{tikzpicture}[scale=0.7]
\draw[line width = 0.5mm] (1,1) -- (6,1);
\draw[line width = 0.5mm] (1,1) -- (1,4) node[left]{$\ell'_k$} -- (1,6);
\draw[line width = 0.5mm] (1,6) -- (3,6) node[above]{$\ell_k'$} -- (6,6);
\draw[line width = 0.5mm] (6,1) -- (6,2) node[right]{$J_f$} -- (6,6);
\draw[line width = 0.5mm] (5.5,3) -- (10.5,3);
\draw[line width = 0.5mm] (5.5,3) -- (5.5,12) node[above , xshift=4mm]{$a_{k-1}^I$};
\draw[line width = 0.5mm] (10.5,3) -- (10.5,4) node[right]{$J_h$} -- (10.5,12);
\draw[line width = 0.5mm] (5.5,12) -- (10.5,12);
\draw[line width = 0.5mm] (5.5,7) -- (10.5,7);
\draw[line width = 0.5mm] (5.5,8) -- (10.5,8);
\draw[line width = 0.5mm] (9,6) -- (14,6);
\draw[line width = 0.5mm] (9,6) -- (9,11);
\draw[line width = 0.5mm] (9,11) -- (14,11);
\draw[line width = 0.5mm] (14,6) -- (14,7) node[right]{$J_g$} -- (14,11);
\draw[line width = 0.5mm, dashed] (5,1) node[below, xshift=4mm]{$a_{k-1}^T$} -- (5,6);
\draw[line width = 0.5mm, dashed] (6.5,3) -- (6.5,10) node[right]{$J_{h,1}$} -- (6.5,12);
\draw[line width = 0.5mm, dashed] (13,6) -- (13,11) node[above, xshift=4mm]{$a^T_{k-1}$};
\draw[pattern = {north east lines}, pattern color = black] (5,1) rectangle (6,6);
\draw[pattern = {north east lines}, pattern color = black] (13,6) rectangle (14,11);
\draw[pattern = {north west lines}, pattern color = black] (5.5,3) rectangle (6.5,12);
\filldraw (5.77,5) circle (1mm) node[right,xshift = 0.6mm]{$v$};
\draw[line width = 0.5mm, dashed] (0,3) node[left]{$u_h^y+1$} -- (5.5,3);
\draw[line width = 0.5mm, dashed] (0,12) node[left]{$u_h^y+d_h$} -- (5.5,12);
\end{tikzpicture}
\caption{We represent a clustering of two squares of the kind $J_h$ that intersects on the right $J_g$ along their markers and on the left $J_f$ along their initial and terminal segments.}
\label{fig:diagram03}
\end{figure}
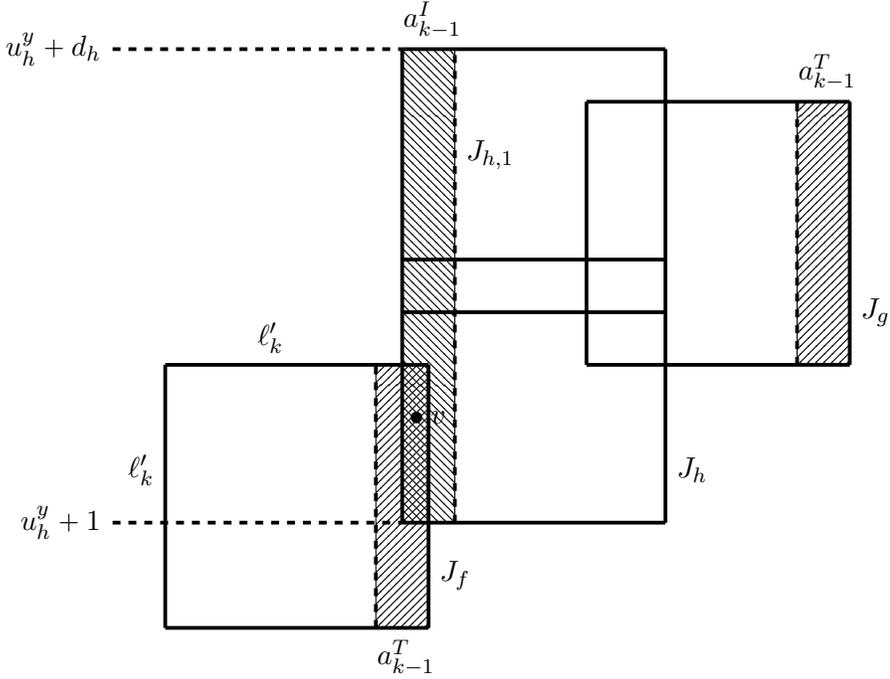
	
	We now show that an index $v=(v^x,v^y) \in J^A$ may belong to at most two rectangles $J_f$ and $J_h$. Indeed, by construction, as $u_g^x \not= u_h^x$, if $v^x$ belongs to two overlapping words of the form $a'_k,a''_k$,  then $v^x$ belongs to either  the intersection of the two markers $1^{(N'_k-1)\ell_{k-1}}$  or the intersection of the terminal segment $a_{k-1}^T$ of $a''_k$ and  the initial segment  $a_{k-1}^I$ of $a'_k$. In both cases described in Lemma~\ref{lemma:OverlappingWordsPrime} we exclude the overlapping of a third word of the form $ a'_k,a''_k$, thus we exclude the fact that $v$ may belong to a third rectangle $J_g$ with $u_g^x \not= u_f^x$ and $u_g^x \not= u_h^x$. Then
	\begin{align*}
	\Card(K^A) &= \sum_{v \in J^A} \mathds{1}_{(p(v)=0)} \\
	&\leq \sum_{h=1}^H \sum_{v \in (u_h+\llbracket 1, \ell'_k \rrbracket \times \llbracket 1, d_h \rrbracket)} \mathds{1}_{(p(v)=0)}  \leq \sum_{h=1}^H  f_{k-1}^A \ell_{k-1} d_h \\
	&\leq \frac{f_{k-1}^A\ell_{k-1}}{\ell'_k} \sum_{h=1}^H  \sum_{v \in J^A} \mathds{1}_{v \in (u_h+\llbracket 1, \ell'_k \rrbracket \times \llbracket 1,d_h \rrbracket)} =  \frac{f_{k-1}^A}{N'_k} \sum_{v \in J^A} \sum_{h=1}^H \mathds{1}_{(v \in J_h)} \\
	&\leq  \frac{2f_{k-1}^A}{N'_k} \Card(J ^A).
	\end{align*}
	This concludes our proof.\end{proof}

\section{Analysis of the zero-temperature limit}

\begin{proof}[Proof of Lemma \ref{lemma:BoundFromBellow}] Item \ref{item:BoundFromBellow_1}. Let $\mu$ be a measure satisfying $\Supp(\mu) \subseteq X_k=\langle L_k \rangle$ which is ergodic. Recall that Birkhoff's ergodic theorem extends to actions of countable amenable groups as long as the average is taken over a tempered F\o lner sequence~\cite{Lindenstrauss}. As the sequence $(\Lambda_n)_{n \in \NN}$ with $\Lambda_n =\llbracket -n,n\rrbracket ^d$ is tempered in $\ZZ^d$, it follows that for $\mu$-almost every point $x$
\[ \mu([\mathcal{F}]) =\lim_{n\to+\infty} \frac{ \Card(\{  u \in \Lambda_n : \sigma^u(x) \in [\mathcal{F}]\})}{\Card(\Lambda_n)}.
\]
We choose such a point $x \in \langle L_k \rangle$ and $s\in \llbracket 1, \ell_k \rrbracket^2$ such that $y=\sigma^{s}(x)$ and all its translates  $\sigma^{t \ell_k}(y)$,  $t \in \mathbb{Z}^2$,  satisfy $\sigma^{t \ell_k}(y|_{\llbracket 1, \ell_k \rrbracket^2}) \in L_k$. By taking a sub-sequence multiple of $\ell_k$ and by taking a set $\widetilde \Lambda_n$ tiled by translates of the square $\llbracket 1,\ell_k \rrbracket^2$,
\begin{gather*}
\widetilde\Lambda_n := \llbracket -n \ell_k, n \ell_k -1\rrbracket^2,
\end{gather*}
one obtains
\begin{align*}
\mu([\mathcal{F}]) &= \lim_{n\to+\infty} \frac{ \Card(\{  u \in \widetilde\Lambda_n -s: \sigma^u(y) \in [\mathcal{F}]\})}{\Card(\widetilde\Lambda_n)}, \\
&= \lim_{n\to+\infty} \frac{ \Card(\{  u \in \widetilde\Lambda_n : \sigma^u(y) \in [\mathcal{F}]\})}{\Card(\widetilde\Lambda_n)}.
\end{align*}
By definition of $L_k$, every pattern in $L_k$ is globally admissible and thus locally admissible,
\begin{gather*}
\forall\, t \in \llbracket -n, n-1\rrbracket^2, \ \forall\, v \in \llbracket 0, \ell_k-D \rrbracket^2, \   \sigma^{v+t \ell_k}(y)|_{\llbracket 1, D \rrbracket^2} \not\in \mathcal{[\mathcal{F}]}.
\end{gather*}
As $\Card( \llbracket 0, \ell_k-1 \rrbracket^2 \setminus \llbracket 0, \ell_k-D \rrbracket ^2) \leq  2D \ell_k$, we have
\begin{gather*}
\Card(\{  u \in \widetilde\Lambda_n : \sigma^u(y) \in [\mathcal{F}]\}) \leq  (2n)^2 2D \ell_k, \\
\Card(\widetilde\Lambda_n) = (2n)^2 \ell_k^2.
\end{gather*}
Therefore we get that $\mu([\mathcal{F}]) \leq 2D/\ell_k$.

Item \ref{item:BoundFromBellow_2}. Let  $\widetilde w \in \widetilde B_k$ be the word whose density of zeroes realizes the maximum value $f_k^B$. By Lemma \ref{lemma:OnedimensionalConcatenatedSubshift}, $\widetilde w$ is a subword of some $\widetilde x \in \widetilde X$. Let $\widetilde{\widetilde x}$ be the vertically aligned configuration corresponding to $\widetilde x$. By the simulation theorem, $\widetilde{\widetilde x} = \widehat\Pi(\widehat x)$ for some $\widehat x \in \widehat X$. Let $\widehat w  := \widehat x|_{\llbracket 1, \ell_k \rrbracket^2}$.  By duplicating the symbol $0$ we obtain,
\begin{gather*}
\Card(B_k) \geq \Card \big(\big\{ w \in \mathcal{A}^{\llbracket 1, \ell_k \rrbracket^2} : \Gamma(w) = \widehat w \big\}\big) = 2^{\ell_k^2f_k^B(\widetilde w)}, \\
h_{top}(X_k^B) = \frac{1}{\ell_k^2}\ln(\Card(B_k)) \geq \ln(2) f_k^B.
\end{gather*}

where $\Gamma$ has been defined in Equation \ref{equation_1}.

Item \ref{item:BoundFromBellow_3}. let $\mu_k^B$ be an ergodic measure of maximal entropy of $X_k^B$. Then 
\[
\Supp(\mu_k^B) \subseteq  X_k^B \ \ \text{and} \ \ P(\beta_k \varphi) \geq h(\mu_k^B)-\beta_k \mu_k^B([\mathcal{F}]) \geq \ln(2) f_k^B - 2D \frac{\beta_k}{\ell_k}.
\]
This is what we wanted to prove.\end{proof}

\begin{proof}[Proof of Lemma \ref{lemma:sizeNeighborhoodSubshift}]
As the pressure of $\beta_k \varphi$ is non-negative (the two configurations $1^\infty$ and $2^\infty$ belong to $X$ and $\varphi$ is identically zero on $X$), we have
\begin{gather*}
h(\mu_{\beta_k}) - \int \! \beta_k \varphi \, d\mu_{\beta_k} = P(\beta_k \varphi) \geq0, \quad
\mu_{\beta_k}([\mathcal{F}]) \leq  \frac{h(\mu_{\beta_k)}}{\beta_k} \leq \frac{ \ln(\Card(\mathcal{A})) }{\beta_k}, \\
\Sigma^2(\mathcal{A})  \setminus [M_k'] \subseteq \bigcup_{u \in  \llbracket 0,R'_k-D \rrbracket^2} \sigma^{-u}([\mathcal{F}]), \quad
\mu_{\beta_k}(\Sigma^2(\mathcal{A})  \setminus [M'_k]) \leq \frac{R'_k{}^2}{\beta_k}\ln( \Card(\mathcal{A})).
\end{gather*}\end{proof}

\begin{proof}[Proof of Lemma \ref{lemma:EntropyEstimate_1}]
By definition of relative entropy
\begin{align*}
H\Big(\mathcal{P}^{\llbracket 1,n \rrbracket^2},\mu_{\beta_k}\Big)
&= \dis H\Big(\mathcal{P}^{\llbracket 1,n \rrbracket^2}\bigvee {\widetilde{\mathcal{G}}}^{\llbracket 1,n \rrbracket^2} \bigvee \mathcal{U}_k^{\llbracket 0,n-R_k' \rrbracket^2},\mu_{\beta_k}\Big)\\
&=H\Big(\mathcal{P}^{\llbracket 1,n\rrbracket^2}\mid {\widetilde{\mathcal{G}}}^{\llbracket 1,n \rrbracket^2}\bigvee\mathcal{U}_k^{\llbracket 0,n-R_k' \rrbracket^2},\mu_{\beta_k}\Big)  \\
&\quad+ H \Big( {\widetilde{\mathcal{G}}}^{\llbracket 1,n \rrbracket^2} \mid \mathcal{U}_k^{\llbracket 0,n-R_k' \rrbracket^2}, \mu_{\beta_k} \Big)  + H \Big( \mathcal{U}_k^{\llbracket 0,n-R_k' \rrbracket^2}, \mu_{\beta_k} \Big).
\end{align*}
	
	The first term of the right-hand side is the relative  entropy at scale  $\ell'_k$ that requires a special treatment. The third term is computed using the estimate in lemma \ref{lemma:sizeNeighborhoodSubshift} (the function that maps $\epsilon \in (0,e^{-1})$ to $H(\epsilon)$ is  increasing),
\begin{align*}
H \big( \mathcal{U}_k^{\llbracket 0,n-R_k' \rrbracket^2}, \mu_{\beta_k} \big) 
&= \sum_{P\in \mathcal{U}_k^{\llbracket 0,n-R_k' \rrbracket^2}}-\mu_{\beta_k}(P)\ln(\mu_{\beta_k}(P)) \\
&\leq  n^2 H(\mathcal{U}_k, \mu_{\beta_k})  \leq  n^2 H(\epsilon_k).
\end{align*}
	
We now compute the term in the middle. We choose $\epsilon'_k > \epsilon_k$ and define
\begin{gather*}
U_n:=\Big\{ x \in \Sigma^2(\mathcal{A}):\Card\left\{u\in\llbracket0,n-R_k'\rrbracket^2:\sigma^u(x)\in[M_k']\right\}\geq n^2(1-\epsilon'_k )\Big\}.
\end{gather*}
	
By the $\ZZ^d$-version of Birkhoff's ergodic theorem we have that
\begin{displaymath}
\lim_{n \to +\infty} \mu_{\beta_k}(U_n) = 1.
\end{displaymath}
	
Let $(\mu_x)_{x \in \Sigma}$ be the family of conditional measures with respect to $\mathcal{U}_k^{\llbracket 0,n-R_k' \rrbracket}$. We have
\begin{align*}
H\left({\widetilde{\mathcal{G}}}^{\llbracket 1,n\rrbracket^2}\mid\mathcal{U}_k^{\llbracket 0,n-R'_k\rrbracket^2},\mu_{\beta_k}\right) 
&=  \int H\Big({\widetilde{\mathcal{G}}}^{\llbracket1,n\rrbracket^2},\mu_x \Big) d\mu_{\beta_k}(x) \\
&=\int_{U_n} H\Big({\widetilde{\mathcal{G}}}^{\llbracket1,n\rrbracket^2},\mu_x \Big) d\mu_{\beta_k}(x) + \\
&\quad +\int_{\Sigma^2(\mathcal{A})\setminus U_n} H\Big({\widetilde{\mathcal{G}}}^{\llbracket1,n\rrbracket^2},\mu_x \Big) d\mu_{\beta_k}(x) \\
&\leq  \dis \int_{U_n} H\Big({\widetilde{\mathcal{G}}}^{\llbracket 1,n \rrbracket^2},\mu_x\Big) d\mu_{\beta_k}(x) + \\
&\quad+ n^2 \mu_{\beta_k}\left(\Sigma^2(\mathcal{A}) \setminus U_n\right) \ln(\Card(\widetilde{\mathcal{A}})), 
\end{align*}
and therefore
\begin{displaymath}
\limsup_{n\to+\infty}\frac{1}{n^2}H \Big({\widetilde{\mathcal{G}}}^{\llbracket 1,n \rrbracket^2}\mid\mathcal{U}_k^{\llbracket 0,n-R_k' \rrbracket^2}, \mu_{\beta_k}\Big)\leq\limsup_{n\to+\infty}\int_{U_n}\frac{1}{n^2}H\Big({\widetilde{\mathcal{G}}}^{\llbracket 1,n \rrbracket^2},\mu_x\Big)d\mu_{\beta_k}(x).
\end{displaymath} 

We now consider a fixed $x \in U_n$. We compute the number of elements of ${\widetilde{\mathcal{G}}}^{\llbracket 1,n \rrbracket^2}$ that are compatible with the constraint
\begin{displaymath}
\Card\{u\in\llbracket 0,n-R_k'\rrbracket^2:\sigma^u(x)\in [M_k']\}\geq n^2 (1-\epsilon'_k ).
\end{displaymath}
Let $I$ be the subset
\begin{displaymath}
I :=\left\{u\in \llbracket 0,n-R_k'\rrbracket^2:\sigma^u(x)\in[M_k']\right\}.
\end{displaymath}
Since $x\in U_n$, we have
\begin{displaymath}
\frac{\Card(I)}{n^2}\geq 1-\epsilon_k'.
\end{displaymath}
Let $J \subseteq I$ be a maximal subset satisfying for every  $ u,v \in J$, 
\begin{displaymath}
\|u-v\|_\infty\geq\frac{1}{2}R_k'.
\end{displaymath}
For every $u\in J$, consider
\begin{displaymath}
I_u:=\Big\{v\in I:\|u-v\|_\infty<\frac{1}{2}R_k' \Big\}.
\end{displaymath}
By maximality of $J$ we have $I=\bigcup_{u \in J} I_u$. We first observe that the sets $\Big(u+\big\llbracket 1,\lceil R_k'/2\rceil\big\rrbracket^2\Big)_{u\in J}$ are pairwise disjoint. Then
\begin{displaymath}
\Card(J) \leq \frac{4n^2}{R_k'^2}.
\end{displaymath}
We also observe that for every $v_1,v_2\in I_u$, $\|v_1-v_2\|_\infty<R_k'$ and
\begin{displaymath}
\Big(v_1+\llbracket 1,R_k' \rrbracket^2 \Big) \bigcap \Big(v_2 + \llbracket 1,R_k' \rrbracket^2 \Big) \neq \varnothing.
\end{displaymath}
For every $u\in I$ let 
\begin{displaymath}
K_u: = \bigcup_{v \in I_u} \big(v+\llbracket 1,R_k' \rrbracket^2 \big)\subset \llbracket1,n\rrbracket^2.
\end{displaymath}
For every $v\in I_u$, we have 
\begin{displaymath}
x|_{v+ \llbracket 1,R_k' \rrbracket^2}\in [M_k'].
\end{displaymath}
In particular the pattern $x|_{v+ \llbracket 1,R_k' \rrbracket^2}$ is locally $\mathcal{F}$-admissible and satisfies the constraint that all the $\widetilde{\mathcal{A}}$-symbols are vertically aligned in $v+ \llbracket 1,R'_k \rrbracket^2$. Using that $K_u$ is connected as a Cayley subgraph of $\ZZ^2$ with the canonical generators, one obtains that the $\widetilde{\mathcal{A}}$-symbols are also vertically aligned  in $K_u$.
	
The width of $K_u$ is less than $2R_k'$, so the cardinality of possible patterns $p\in\widetilde{\mathcal{A}}^{K_u}$ satisfying the constraint that the $\widetilde{\mathcal{A}}$-symbols are vertically aligned is bounded by $\Card(\widetilde{\mathcal{A}})^{2R_k'}$. The cardinality of set of patterns with support $\dis \bigcup_{u\in J}K_u$ is thus bounded by 
\begin{displaymath}
\Big(\Card(\widetilde{\mathcal{A}})^{2R_k'}\Big)^{4n^2/R_k'^2}= \exp\left(\left[2R_k' \cdot \frac{4n^2}{R_k'^2}\right] \ln (\Card(\widetilde{\mathcal{A}}))\right) = \exp\left( \frac{8n^2}{R_k'} \ln (\Card(\widetilde{\mathcal{A}}))\right).
\end{displaymath}
Since $\dis \bigcup_{u\in J} K_u$ covers $I$, the cardinality of the set of patterns with support $\dis \llbracket 1,n \rrbracket^2\setminus\bigcup_{u\in J}K_u$ is bounded by $\Card(\widetilde{\mathcal{A}})^{n^2\epsilon_k'}$. We have proven that, for every $x \in U_n$,
\begin{displaymath}
H\Big({\widetilde{\mathcal{G}}}^{\llbracket1,n\rrbracket^2},\mu_x\Big)\leq\Big(\frac{8n^2}{R_k'}+n^2\epsilon_k' \Big)\ln(\Card(\widetilde{\mathcal{A}})).
\end{displaymath}
We conclude by letting $n\to+\infty$ and $\epsilon_k' \to \epsilon_k$.
\end{proof}

The proof of Lemma \ref{lemma:EntropyEstimate_2} requires the following intermediate result.

\begin{lemma}
\label{lemma:CoveringComplexity}
Let $n,\ell$ be integers which satisfy $n > 2 \ell >2$, $\epsilon \in(0,1)$, and let $S \subseteq \llbracket 0,n-2 \ell \rrbracket^2$ be a subset satisfying $\Card(S) \geq n^2(1-\epsilon)$. Let $\widehat E$ be the set
\begin{gather*}
\widehat E := \big\{ w \in \widehat{\mathcal{A}}^{\llbracket 1, n \rrbracket^2} : 
\forall\,  u \in S , \ \sigma^u(w)|_{\llbracket 1, 2 \ell\, \rrbracket^2} \in \mathcal{L}(\widehat X, 2\ell)  \big\}.
\end{gather*}
Then
\[
\frac{1}{n^2} \ln(\Card(\widehat E)) \leq \frac{1}{\ell} \ln(\Card(\widetilde{\mathcal{A}})) +  \frac{1}{\ell^2}\ln(C^{\widehat{X}}(\ell)) + \epsilon \ln (\Card(\widehat{\mathcal{A}})).
\]
\end{lemma}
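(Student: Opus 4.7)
The plan is a tiling argument. Let $M:=\lfloor n/\ell\rfloor$ and introduce the disjoint squares $T_{i,j}:=\llbracket i\ell+1,(i+1)\ell\rrbracket\times\llbracket j\ell+1,(j+1)\ell\rrbracket$ for $(i,j)\in\llbracket 0,M-1\rrbracket^2$, which leave a residual strip in $\llbracket 1,n\rrbracket^2$ of area at most $2n\ell$. To each tile I would attach the candidate-origin set
\[
V_{i,j}:=\llbracket (i-1)\ell,\,i\ell-1\rrbracket\times\llbracket (j-1)\ell,\,j\ell-1\rrbracket\cap\llbracket 0,n-2\ell\rrbracket^2,
\]
engineered so that every $v\in V_{i,j}$ forces $T_{i,j}\subseteq v+\llbracket 1,2\ell\rrbracket^2$, the $V_{i,j}$ are pairwise disjoint, and each interior one has cardinality exactly $\ell^2$. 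Then I would call $T_{i,j}$ good when $V_{i,j}\cap S\neq\varnothing$ and bad otherwise.

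The bad-tile count follows from the disjointness of the $V_{i,j}$: if $T_{i,j}$ is interior and bad, then $V_{i,j}$ sits entirely in $\llbracket 0,n-2\ell\rrbracket^2\setminus S$, so
\[
\ell^2\cdot\#\{\text{interior bad }T_{i,j}\}\leq\Card(\llbracket 0,n-2\ell\rrbracket^2\setminus S)\leq n^2\epsilon.
\]
The combined area of bad interior tiles, near-boundary tiles (where $V_{i,j}$ leaks outside $\llbracket 0,n-2\ell\rrbracket^2$) and the outer strip is therefore at most $n^2\epsilon+O(n\ell)$, on which I would use the trivial bound of $\Card(\widehat{\mathcal{A}})$ per cell. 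On a good tile $w|_{T_{i,j}}\in\mathcal{L}(\widehat X,\ell)$, so its projection lies in $\mathcal{L}(\widehat\Pi(\widehat X),\ell)=\mathcal{L}(\widetilde{\widetilde X},\ell)$ and is vertically aligned, hence encoded by a single horizontal word of length $\ell$. This gives at most $\Card(\widetilde{\mathcal{A}})^\ell$ projections per tile and at most $\Card(\widetilde{\mathcal{A}})^{M^2\ell}\leq\Card(\widetilde{\mathcal{A}})^{n^2/\ell}$ in total, and given the projection the definition of the relative complexity yields at most $C^{\widehat X}(\ell)$ preimages per tile, contributing a factor at most $C^{\widehat X}(\ell)^{M^2}\leq C^{\widehat X}(\ell)^{n^2/\ell^2}$.

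Multiplying the three contributions, taking $\tfrac{1}{n^2}\ln$, and using $M\leq n/\ell$ would yield
\[
\tfrac{1}{n^2}\ln\Card(\widehat E)\leq \tfrac{1}{\ell}\ln\Card(\widetilde{\mathcal{A}})+\tfrac{1}{\ell^2}\ln C^{\widehat X}(\ell)+\bigl(\epsilon+O(\tfrac{\ell}{n})\bigr)\ln\Card(\widehat{\mathcal{A}}),
\]
which is the announced estimate up to a lower-order $O(\ell/n)$ boundary term (harmless in the asymptotic application inside Lemma~\ref{lemma:EntropyEstimate_2}). The main obstacle is the calibration of $V_{i,j}$: a naive closed-box choice $\llbracket (i-1)\ell,i\ell\rrbracket^2$ of size $(\ell+1)^2$ is not disjoint, each $v$ then belongs to up to four sets $V_{i,j}$, and the resulting bound on the number of bad tiles is inflated by a factor of $4$, spoiling the matching with the stated constant in front of $\epsilon$. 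The half-open shift in the definition of $V_{i,j}$ is what produces both the exact disjointness and the correct prefactor; the rest of the argument is routine bookkeeping.
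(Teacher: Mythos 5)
Your proposal follows essentially the same route as the paper: tile $\llbracket 1,n\rrbracket^2$ by $\ell\times\ell$ squares, certify a tile as globally admissible whenever some $u\in S$ places it inside $u+\llbracket 1,2\ell\rrbracket^2$, bound the count on each certified tile by $\Card(\widetilde{\mathcal{A}})^{\ell}\cdot C^{\widehat X}(\ell)$ (vertically aligned projection, then relative complexity), and use the trivial bound $\Card(\widehat{\mathcal{A}})$ per cell elsewhere; all of that bookkeeping is correct. The one point where you fall short of the statement is the accounting of uncertified cells: your bad-tile count via the disjoint half-open boxes $V_{i,j}$ only controls the interior tiles, and the near-boundary tiles plus the residual strip cost you an extra $\mathcal{O}(n\ell)$ cells, so what you actually prove is the inequality with $\epsilon$ replaced by $\epsilon+\mathcal{O}(\ell/n)$. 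As you say, this weakened form suffices for the application in Lemma~\ref{lemma:EntropyEstimate_2} (there $\ell=\ell'_k$ is fixed and $n\to+\infty$), but it is not the lemma as stated, which is an exact inequality for every $n>2\ell$. The paper avoids the loss by a translation trick that fits verbatim into your notation and removes the need to count bad tiles at all: since $S\subseteq\llbracket 0,n-2\ell\rrbracket^2$, every $u\in S$ lies in exactly one box $B_{i,j}:=\llbracket (i-1)\ell,i\ell-1\rrbracket\times\llbracket (j-1)\ell,j\ell-1\rrbracket$, and then $u\in V_{i,j}$, so the tile $T_{i,j}=B_{i,j}+(\ell+1,\ell+1)$ is certified and contained in $\llbracket 1,n\rrbracket^2$; hence $S$ is covered by the boxes attached to certified tiles, and since the certified tiles are pairwise disjoint translates of these pairwise disjoint boxes, their union has at least $\Card(S)\geq n^2(1-\epsilon)$ cells. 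The uncertified region therefore has at most $\epsilon n^2$ cells with no boundary correction, the number of certified tiles is at most $(n/\ell)^2$, and the stated bound follows exactly (this also shows the paper's ``assume $n$ is a multiple of $\ell$'' is purely notational). With that single change in the covering estimate your argument becomes a complete proof of the lemma as stated.
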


\begin{proof}
To simplify the notations we assume that $n$ is a multiple of $\ell$. We decompose the square $\llbracket 1,n \rrbracket^2$ into a disjoint union of squares of size $\ell$,
\[ \llbracket1,n\rrbracket^2=\bigcup_{v \in \llbracket 0, \frac{n}{\ell} -1 \rrbracket^2} \left( \ell v+ \llbracket 1 ,\ell \rrbracket^2 \right). \]
	
We define the set of indexes $v$ that intersect $S$, more precisely, we have
\[ 
V:=\left\{v\in\Big\llbracket 0,\frac{n}{\ell}-2\Big\rrbracket^2:\left(\ell v+\llbracket 0,\ell -1\rrbracket^2\right)\bigcap S\neq \varnothing \right\}.
\]
	
Then for every  $w \in \widehat E$, $v \in V$, and $u \in \big( \ell v+ \llbracket 0 ,\ell-1 \rrbracket^2 \big) \bigcap S$, therefore
\[ \left(\ell v + \llbracket 1 + \ell , 2\ell \rrbracket^2 \right) \subseteq \left( u+\llbracket 1, 2\ell \rrbracket^2 \right). \]
Since we are taking $u\in S$ we have that
\[ \sigma^u(w) |_{\llbracket 1,2\ell \rrbracket^2} \in \mathcal{L}(\widehat X, 2\ell), \]
and then
\[ \sigma ^{\ell v+(\ell,\ell)}(w)|_{\llbracket 1 , \ell \rrbracket^2} \in \mathcal{L}(\widehat X, \ell). \]
	
The restriction of $w$ on every square $\big(\ell v + \llbracket 1 + \ell , 2\ell \rrbracket^2 \big)$ is globally admissible with respect to $\widehat{\mathcal{F}}$. Note that these squares are pairwise disjoint and the cardinality of their union  is at least $n^2(1-\epsilon)$, since
\[
\Card\left(\bigcup_{v\in V}\left(\ell v+\llbracket 1+\ell,2\ell\rrbracket^2\right)\right)=\Card\left(\bigcup_{v\in V}\left(\ell v+\llbracket 0 ,\ell- 1\rrbracket^2\right)\right)\geq\Card(S).
\]
	
Hence we proved that $\widehat E$ is a subset of the set of patterns $w$ made of independent and disjoint words $(w_v)_{v \in V}$, with $w_v \in \mathcal{L}(\widehat X,\ell)$, and  of arbitrary symbols on  $\llbracket 0, n-2\ell \rrbracket^2 \setminus S$ of size at most $\epsilon n^2$. Using the trivial bound $\Card(\mathcal{L}(\widetilde X, \ell)) \leq \Card(\widetilde{\mathcal{A}})^{\ell}$, we have
\[
\Card(\widehat E) \leq \left( \Card(\widetilde{\mathcal{A}})^{\ell} \cdot C^{\widehat{X}}(\ell) \right)^{\left(n/\ell\right)^2}\cdot \Card(\widehat{\mathcal{A}})^{\epsilon n^2}
\]
and therefore
\[
\frac{1}{n^2} \ln(\Card(\widehat E)) \leq \frac{1}{\ell} \ln(\Card(\widetilde{\mathcal{A}})) +  \frac{1}{\ell^2}\ln(C^{\widehat{X}}(\ell)) + \epsilon \ln (\Card(\widehat{\mathcal{A}})).
\]

\end{proof}

\begin{proof}[Proof of  Lemma \ref{lemma:EntropyEstimate_2}] \quad

{\it Proof of item \ref{item:EntropyEstimate_2_1}.} Using the $\ZZ^d$-version of Birkhoff's ergodic theorem and Lemma~\ref{lemma:sizeNeighborhoodSubshift}, it follows that for almost every $x\in\Sigma^2(\mathcal{A})$,
\[
\lim_{n\to+\infty} \frac{1}{n^2} \Card \big(\big\{ u \in \llbracket 0,n-R'_k \rrbracket^2 : \sigma^u(x) \in [M'_k] \big\} \big) = \mu_{\beta_k}([M'_k])
\]
and
\[
\lim_{n \to+\infty} \frac{1}{n^2} \Card \big( \big\{ u \in \llbracket 1,n \rrbracket^2 : \pi(x(u))=\widetilde a \big\} \big) = \mu_{\beta_k}(\widetilde G_{\widetilde a}), \quad \forall\, \widetilde a \in \widetilde{\mathcal{A}}.
\]

We choose $n> R'_k$. An element  of the partition $\widetilde{\mathcal{G}}^{\llbracket 1, n \rrbracket^2} \bigvee \mathcal{U}^{\llbracket 0,n-R'_k \llbracket^2}$ is of the form $\widetilde G_p \cap U_S$ where $p \in \widetilde{\mathcal{A}}^{\llbracket 1, n \rrbracket^2}$ is a pattern and $S \subseteq \llbracket 0,n-R'_k\rrbracket^2$ is a subset, where
\begin{gather*}
U_S := \big\{ x \in \Sigma^2(\mathcal{A}) : \forall\,  u \in S, \ \sigma^u(x) \in [M'_k], \ \forall\, u \in \llbracket 0,n-R'_k\rrbracket^2 \setminus S, \ \sigma^u(x) \not\in [M'_k] \big\}, \\
\widetilde G_p := \{ x \in \Sigma^2(\mathcal{A}) : \Pi(x|_{\llbracket 1, n \rrbracket^2}) = p \}.
\end{gather*}
Let $\eta < \mu_{\beta_k}(\widetilde G_0)$.  Lemma~\ref{lemma:sizeNeighborhoodSubshift} implies $\mu_{\beta_k} \big(\Sigma^2(\mathcal{A}) \setminus [M'_k] \big) \leq \epsilon_k$. Using again Birkhoff's theorem we get
\begin{gather*}
 \lim_{n\to+\infty} \frac{1}{n^2} \Card \left(\{  u \in \llbracket 0, n-R'_k \rrbracket : \sigma^u(x) \in [M'_k] \}\right) > 1-\epsilon_k, \text{for $\mu_{\beta_k}$-a.e. $x$,}
 \end{gather*}
 \begin{gather}\label{equation_3}
\lim_{n\to+\infty} \mu_{\beta_k} \Big( \bigcup\nolimits_{S \subseteq \llbracket 0,n-R'_k\rrbracket^2} \left\{ U_S :  \Card(S) > n^2(1-\epsilon_k) \right\} \Big) = 1.
\end{gather}
For $n$ large enough, we choose $S\subseteq\llbracket 0,n-R_k'\rrbracket^2$ such that $U_S \not= \varnothing$ and $\Card(S)\geq n^2(1-\epsilon_k)$. By definition of $M'_k$ and $T'_k$ (see page 16), if $x\in U_S$, then for every $u\in S$, $\sigma^u(x)|_{\llbracket 1,R'_k \rrbracket^2}$ is a locally admissible pattern with respect to $\mathcal{F}$ and
\[ \sigma^{u+T'_k}(x)|_{\llbracket 1,2\ell'_k\rrbracket^2} \in \mathcal{L}(X,2\ell'_k). \]
Define for every $n>R_k'$ and every pattern $p \in \widetilde{\mathcal{A}}^{\llbracket 1, n \rrbracket^2}$ the set
\[ K_n(p):=\{u\in\llbracket 1,n\rrbracket^2:p(u)=0\}. \]
As $\mu_{\beta_k}(\widetilde G_0)>\eta$, it follows by Birkhoff's ergodic theorem
 \begin{gather}\label{equation_4}
\lim_{n\to+\infty}\mu_{\beta_k}\left(\bigcup\nolimits_p\left\{\widetilde G_p:\Card(K_n(p))>n^2  \eta\right\} \right)=1. 
 \end{gather}
From Equations \ref{equation_3} and \ref{equation_4}, for large $n$, one can choose $S$ and $p$ such that $U_S \cap \widetilde G_p \not= \varnothing$,  $\Card(K_p) > n^2 \eta$ and $\Card(S) \geq n^2(1-\epsilon_k)$. Using the notations in Definition \ref{Definition:IJK} and the conclusions of Lemma \ref{lemma:AdmissibilityIntermediateScale}, one obtains
\[ T'_k+S \subseteq I(p,\ell'_k) \quad \mbox{and} \quad \tau'_k+I(p,\ell'_k)\subseteq J^A(p,\ell'_k)\sqcup J^B(p,\ell'_k)=:J^A\sqcup J^B, \]
therefore by our choice of $S$ we obtain
\begin{equation}
\label{eq.Lemma15.1}
n^2(1-\epsilon_k)\leq \Card(S) = \Card(\tau'_k+T'_k+S) \leq \Card \big(J^A \sqcup J^B \big) \leq n^2.
\end{equation}
Besides that, we have
\[ n^2\eta \leq \Card(K_n(p)) \leq \Card(K^A \sqcup K^B)+n^2 \epsilon_k \]
and by the Lemma~\ref{lemma:FrequencyOf0} we have
\[ \Card(K_n(p)) \leq  \frac{2}{N'_k}\Card(J^A)f_{k-1}^A +  \big(1-N_{k-1}^{-1}\big)^{-1} \Card(J^B) f_{k-1}^B + n^2 \epsilon_k. \]

We divide each term by $n^2$ and take the limit with $n\to+\infty$, $\epsilon \to \epsilon_k$, and $\eta \to \mu_{\beta_k}([0])$.

{\it Proof of item \ref{item:EntropyEstimate_2_2}.}
We now assume that $k$ is even and choose $\eta$ such that $\mu_{\beta_k}(\widetilde G_1) > \eta$. We choose $p\in\widetilde{\mathcal{A}}^{\llbracket1,n\rrbracket^2}$ such that $\widetilde G_p \cap U_S \not= \varnothing$ and
\begin{equation}
\label{eq.Lemma15.2}
\dis \Card \left( \left\{ u \in \llbracket 1,n\rrbracket^2 : p(u)=1 \right\} \right) > \eta  n^2.
\end{equation}
Let $x \in \widetilde G_p \cap U_S$ and $(\mu_x)_{x \in\Sigma}$ be the family of conditional measures with respect to the partition $\widetilde{\mathcal{G}}^{\llbracket 1, n \rrbracket^2} \bigvee \mathcal{U}^{\llbracket 0,n-R_k \llbracket^2}$. We use the trivial upper bound of the entropy, so
\begin{equation}
\label{eq.Lemma15.6}
\dis H( \mathcal{G}^{\llbracket 1,n \rrbracket^2}, \mu_x) \leq \ln(\Card(E_{p,S}))
\end{equation}
where
\[
E_{p,S} := \big\{ w \in \mathcal{A}^{\llbracket 1,n \rrbracket^2} :  \Pi(w)=p \ \ \text{and} \ \ 
\forall u \in S, \ \sigma^{u+T'_k}(w)|_{\llbracket 1,2\ell'_k\rrbracket^2} \in \mathcal{L}(X,2\ell'_k) \big\}.
\]
Also consider
\[ \widehat E_{p,S} := \Gamma(E_{p,S}). \]

Note that every pattern in $E_{p,S}$ is obtained from a pattern in $\widehat E_{p,S}$ by duplicating the symbol $0$. Using Lemma~\ref{lemma:CoveringComplexity} we conclude that
\begin{gather*}
\ln(\Card(E_{p,S}) ) \leq \ln( \Card(\widehat E_{p,S}) ) + \Card( K_n(p)) \ln(2), \\
\frac{1}{n^2} \ln(\Card(\widehat E_{p,S})) \leq \frac{1}{\ell'_k} \ln(\Card(\widetilde{\mathcal{A}})) +  \frac{1}{\ell'_k{}^2}\ln(C'_k) + \epsilon_k \ln (\Card(\widehat{\mathcal{A}})),
\end{gather*}
thus
\begin{equation}
\label{eq.Lemma15.3}
\begin{array}{rcl}
\dis \frac{1}{n^2} \ln(\Card(E_{p,S})) & \leq &  \dis \Big( \frac{2}{N'_k}\Card(J^A) f_{k-1}^A + (1-N_{k-1}^{-1})^{-1} \Card(J^B) f^B_{k-1}+ n^2 \epsilon_k \Big) \frac{\ln(2)}{n^2} + \\
   &   & \dis + \frac{1}{\ell'_k} \ln(\Card(\widetilde{\mathcal{A}})) +  \frac{1}{\ell'_k{}^2}\ln(C_k') + \epsilon_k \ln (\Card(\widehat{\mathcal{A}})).
\end{array}
\end{equation}

The symbol $1$ does not appear in $J^B=J^B(p,\ell'_k)$, thus
\[
\big\{ u \in \llbracket 1,n\rrbracket^2 : p(u)=1 \big\} \subset J^A  \sqcup \left( \llbracket 1,n \rrbracket^2 \setminus (J^A\sqcup J^B) \right).
\]
Since we are assuming (\ref{eq.Lemma15.2}), using (\ref{eq.Lemma15.1}) and the fact that $J^A \cap J^B = \emptyset$, we obtain
\begin{equation*}
\Card \left(\llbracket 1,n\rrbracket^2  \setminus (J^A\sqcup J^B) \right) \leq  n^2\varepsilon_k
\end{equation*}
\begin{equation}
\label{eq.Lemma15.4}
\dis \Card(J^A) \geq n^2 \Big( \eta - \epsilon_k \Big) \quad\mbox{and}\quad \Card(J^B) \leq n^2 \Big(1-\eta+\epsilon_k \Big).
\end{equation}

By replacing the upper bound for $\Card(J^B)$ given in (\ref{eq.Lemma15.4}) and $\Card(J^A)\leq n^2$ in (\ref{eq.Lemma15.3}) we obtain that
\begin{equation}
\label{eq.Lemma15.5}
\begin{array}{rcl}
\dis \frac{1}{n^2} \ln(\Card(E_{p,S})) & \leq &  \dis \left( \frac{2}{N'_k}f_{k-1}^A + (1-N_{k-1}^{-1})^{-1}\left( 1-\eta +\epsilon_k \right) f^B_{k-1}+ \epsilon_k \right)\ln(2) + \\
&   & \dis + \frac{1}{\ell'_k} \ln(\Card(\widetilde{\mathcal{A}})) +  \frac{1}{\ell'_k{}^2}\ln(C_k') + \epsilon_k \ln (\Card(\widehat{\mathcal{A}})).
\end{array}
\end{equation}
By integrating with respect to $\mu_{\beta_k}$ in both sides and taking the limit when $n\to+\infty$ we obtain item (2) of Lemma \ref{lemma:EntropyEstimate_2}. Item (3) is analogous.
\end{proof}

\section{Results on computability}
\label{section.computability}

In this last section we prove the two bounds on the relative complexity and reconstruction functions. 
The subshift of finite type $\widehat X=\Sigma^2(\widehat{\mathcal{A}},\widehat{\mathcal{F}})$ in the Aubrun-Sablik construction~\cite{AubrunSablik2013} as described in Theorem~\ref{theorem:AubrunSablikSimulation} is composed of four layers, that is, it is a subshift of a product of four subshifts of finite type, which is itself described by a finite set of forbidden patterns which impose conditions on how the layers superpose. See Figure 14 of~\cite{AubrunSablik2013}. The layers are:

\begin{enumerate}
	\item \texttt{Layer} 1: The set of all configurations $x\in \widetilde{\mathcal{A}}^{\ZZ^2}$ that are vertically aligned, that is, $x_{u} = x_{u+(0,1)}$ for every $u \in \ZZ^2$.
	\item \texttt{Layer} 2: $\textbf{T}_{\texttt{Grid}}$ A subshift of finite type extension of a sofic subshift which is generated by the substitution given in Figure 3 of~\cite{AubrunSablik2013}. The sofic subshift induces infinite vertical ``strips'' of computation which are of width $2^n$ for every $n \in \NN$ and occur with bounded gaps (horizontally) in any configuration. It also encodes a ``clock'' on every computation strip of width $2^n$, which counts and restarts periodically every $2^{2^n}+2$ vertical steps in its strip.
	\item \texttt{Layer} 3: $\mathcal{M}_{\texttt{Forbid}}$ A subshift of finite type given by Wang tiles which replicates, on top of each clock determined by the previous layer, the space-time diagram of a Turing machine which enumerates all forbidden patterns of the effective subshift $\widetilde{X}$. It also communicates information from the space-time diagram to the fourth layer.
	\item \texttt{Layer} 4: $\mathcal{M}_{\texttt{Search}}$ A subshift of finite type given by Wang tiles which simulates a Turing machine which serves the purpose of checking whether the patterns enumerated by the third layer appear in the first layer. Each machine searches for forbidden patterns in a ``responsibility zone'' which is determined by the hierarchical structure of \texttt{Layer} 2.
\end{enumerate}

The rules between the four layers described in~\cite{AubrunSablik2013} force the Turing machine space-time diagrams to occur in every strip, and to restart their computation after an exponential number of steps. This ensures that every configuration witnesses every step of computation in a relatively dense set, and that every forbidden pattern is written on the tape by the Turing machine in every large enough strip. The fourth layer uses the information from the third layer to search for occurrences of the forbidden patterns in the first layer and thus discards any configuration in the first layer where one of these patterns occurs.

In the proofs that follow, we shall use nomenclature of~\cite{AubrunSablik2013} and refer to explicit parts and bounds associated to their construction, so the reader should bear in mind that this section is not meant to be self-contained. However, we shall aim to explain our arguments in such a way that at least they can be understood intuitively without the need to refer to~\cite{AubrunSablik2013}.

\begin{proof}[Proof of Proposition~\ref{proposition:ComplexityFunctionEstimate}]
	Let us denote by $C_n(\texttt{Layer}_k(\widehat{X}))$ the complexity of the projection to the $k$-th layer. and by $C_n(\texttt{Layer}_k(\widehat{X}) | \texttt{Layer}_j(\widehat{X})  )$ the complexity of the projection to the $k$-th layer given that there is a fixed pattern on the $j$-th layer. Clearly we have that 
	\begin{multline*}
	C^{\widehat{X}}(n) \leq C_n(\texttt{Layer}_1(\widehat{X})) \cdot C_n(\texttt{Layer}_2(\widehat{X})) \cdot C_n(\texttt{Layer}_3(\widehat{X})| \texttt{Layer}_2(\widehat{X}) ) \cdot \\ \cdot C_n(\texttt{Layer}_4(\widehat{X})|\texttt{Layer}_2(\widehat{X})).
	\end{multline*}
	
	\begin{itemize}
		\item \texttt{Layer} 1: As this layer is given by all $x\in \widetilde{\mathcal{A}}^{\ZZ^2}$ so that $x_{u} = x_{u+(0,1)}$ for every $u \in \ZZ^2$, a trivial upper bound for the complexity is \[ C_n(\texttt{Layer}_1(\widehat{X})) = \mathcal{O}(|\widetilde{\mathcal{A}}|^n).   \] 
		
		In fact, as in the end the only configurations which are allowed are those whose horizontal projection lies in the effective subshift $\widetilde{X}$, a better bound is given by $C_n(\texttt{Layer}_1(\widehat{X})) = \mathcal{O}(\exp( n\ h_{\mbox{top}}(\widehat{X})))$. For simplicity, we shall just keep the trivial bound.
		
		\item \texttt{Layer} 2: The complexity of every substitutive subshift in $\ZZ^2$ is $\mathcal{O}(n^2)$. To see this, suppose that the substitution sends symbols of some alphabet $\mathcal{A}_2$ to $n_1\times n_2$ arrays of symbols. By definition, every pattern of size $n$ occurs in a power of the substitution. If $k$ is such that $\min\{n_1,n_2\}^{k-1} \leq n \leq \min\{n_1,n_2\}^{k}$, then necessarily any pattern of size $n$ occurs in the concatenation of at most $4$ $k$-powers of the substitution. There are $|\mathcal{A}_2|^4$ choices for the $k$-powers and at most $(\max\{n_1,n_2\}^k)^2\leq (n\max\{n_1,n_2\})^2$ choices for the position of the pattern. It follows that there are at most $(|\mathcal{A}_2|^4\max\{n_1,n_2\}^2) n^2 = \mathcal{O}(n^2)$ patterns of size $n$. Furthermore, Mozes construction~\cite{Mozes1989} of an SFT extension for substitutive subshifts does not increase the complexity by more than a constant. We obtain,
		\[ C_n(\texttt{Layer}_2(\widehat{X})) = \mathcal{O}(n^2)\]
		
		The reader can find further information on dynamical systems generated by subtitutions in~\cite{Mozes1989,PytheasFogg}.
		
		\item \texttt{Layer} 3: It can be checked directly from the Aubrun-Sablik construction that the symbols on the third layer satisfy the following property: if the symbols on the substitution layer are fixed, then for every $u \in \ZZ^2$ the symbol at position $u$ is uniquely determined by the symbols at positions $u-(0,1), u-(1,1)$ and $u-(-1,1)$.  In consequence, it follows that knowing the symbols at positions in the ``U shaped region'' \[U = ( \{0\} \times \llbracket 1,n-1 \rrbracket ) \cup (\llbracket 0,n-1 \rrbracket\times \{0\}) \cup (\{n-1 \} \times \llbracket 1,n-1 \rrbracket) \]  completely determines the pattern. Therefore, if this layer has an alphabet $\mathcal{A}_3$, we have \[
		C_n(\texttt{Layer}_3(\widehat{X})| \texttt{Layer}_2(\widehat{X}) ) \leq |\mathcal{A}_3|^{3n-2} \leq \mathcal{O}(K_1^n),   \]
		for some positive integer $K_1$.
		
		\item \texttt{Layer} 4: The same argument for Layer $3$ holds for Layer 4. Therefore, if the alphabet of layer $4$ is $\mathcal{A}_4$ we have that for some positive integer $K_2$,
		\[
		C_n(\texttt{Layer}_4(\widehat{X})| \texttt{Layer}_2(\widehat{X}) ) \leq |\mathcal{A}_4|^{3n-2} \leq \mathcal{O}(K_2^n).   \]
	\end{itemize}
	
	Putting the previous bounds together, we conclude that there is some constant $K>0$ such that
	\[C^{\widehat{X}}(n) = \mathcal{O}(n^2 K^n).\]	
	This yields the desired bound on Proposition~\ref{proposition:ComplexityFunctionEstimate}.\end{proof}

We proved in Lemma \ref{proposition:algorithm} that $\widetilde{\mathcal{F}}$ satisfies the assumptions of Proposition \ref{proposition:ReconstructionFunctionEstimate}. We now prove the upper bound of the reconstruction function $R^{\widehat{X}} \colon \NN\to \NN$ of $\widehat{X}= \Sigma^2(\widehat{\mathcal{A}},\widehat{\mathcal{F}})$. Of course, a formal proof of these estimates would require a restatement of the construction of Aubrun-Sablik with all its details, which is out of the scope of this paper. Instead, we shall argue that the bounds we give suffice, making reference to the properties of the Aubrun-Sablik construction.

A description of $\widehat{\mathcal{F}}$ is given in~\cite{AubrunSablik2013} in an (almost) explicit manner for all layers except the substitution layer. For the substitution layer, a description of the forbidden patterns can be extracted in an explicit way from the article of Mozes~\cite{Mozes1989}.

The behavior of layers 2,3 and 4 in the Aubrun-Sablik construction is mostly independent of layer 1, except for the detection of forbidden patterns which leads to the forbidden halting state of the machine in the third layer. Because of that reason the analysis of the reconstruction function $R^{\widehat{X}}$ can be split into two parts:
\begin{enumerate}
	\item \textbf{Structural: } Assuming that the contents of the first layer are globally admissible (the configuration in the first layer is an extension of a configuration from $\widetilde{X}$), we give a bound that ensures that the contents of layers $2,3$ and $4$ are globally admissible, that is:
	\begin{itemize}
		\item The contents of layer $2$ correspond to a globally admissible pattern in the substitutive subshift and the clock.
		\item The contents of layer $3$ and $4$ correspond to valid  space-time diagrams of Turing machines that correctly align with the clocks.
	\end{itemize}
	\item \textbf{Recursive: } A bound that ensures that the contents of the first layer are globally admissible. This bound will of course depend upon $R^{\widetilde{X}}$ and $T^{\widetilde X}$.
\end{enumerate}

Finally, we are able to prove the upper bound for the reconstruction function given by Proposition~\ref{proposition:ReconstructionFunctionEstimate}.

\begin{proof}[Proof of Proposition~\ref{proposition:ReconstructionFunctionEstimate}]
Let us begin with the structural part, as it is simpler and does not depend upon $\widetilde{X}$. Let $p$ be a pattern with support $B_n$ and assume that the first layer of $p$ is globally admissible.
	
From Mozes's construction of SFT extensions for substitutions~\cite{Mozes1989} it can be checked that any locally admissible pattern of support $B_n$ of Mozes's SFT extension of a primitive substitution (The Aubrun-Sablik substitution is primitive) is automatically globally admissible. Let $k$ be the smallest positive integer such that the second layer of $p$ occurs within four level $k$ macrotiles of the substitution (each has size $4^k \times 2^k$) in any locally admissible pattern of that support.
	
Next, a clock runs on every strip of the Aubrun-Sablik construction. By the previous argument, the largest zone which intersects $p$ in more than one position is of level at most $k$. Therefore its largest computation strip has horizontal length $2^k$. In order to ensure that the clock starts on a correct configuration on every strip contained in $p$, we need to witness this pattern inside a locally admissible pattern which stacks $2^{2^k}+2$ macrotiles of level $k$ vertically. This ensures that the clock tiles occurring in $p$ are globally admissible.
	
Finally, if every clock occurring in $p$ starts somewhere, then the contents of the third layer are automatically correct, as they are determined every time it restarts. To certify that the fourth layer restricted to $p$ is globally admissible, we just need extend the horizontal length of our pattern twice, so that the responsibility zone of the largest strip is contained in it. 
	
By the previous arguments, it would suffice to witness $p$ inside a locally admissible pattern which contains in its center a $4 \times (2^{2^k}+2)$ array of macrotiles, each one of size $4^k \times 2^k$. Clearly the vertical term is dominant, thus it suffices, for large enough $n$, to fix a square box of size length $2^{k}(2^{2^k}+2)$.

As $2\cdot 4^{k-1} < n \leq 2\cdot 4^{k}$, there is a constant $C_0 >0$ such that an estimate for this part of the reconstruction function can be written as \[ R_{\mathrm{Struct}}^{\widehat{X}}(n) = \mathcal{O}(\sqrt{n}C_0^{\sqrt{n}}).  \]

Let us now deal with the recursive part. We need to find a bound such that the word of length $n$ occurring in the first layer of $p$ is globally admissible. By definition of $R^{\widetilde{X}}$, it suffices to have $p$ inside a pattern with support $B_{R^{\widetilde{X}}(n)}$ and check that the first layer is locally admissible with respect to $\widetilde{\mathcal{F}}$. In other words, we need to have the Turing machine $\widetilde{\mathbb{M}}$ check all forbidden words of length $R^{\widetilde{X}}(n)$ in this pattern. Luckily, the number of steps in order to do this is already computed in Aubrun and Sablik's article. After Fact 4.3 of~\cite{AubrunSablik2013} they show that, if $p_0,p_1,\dots,p_r$ are the first $r+1$ patterns enumerated by the Turing machine $\widetilde{\mathbb{M}}$, then the number of steps $S(p_0,\dots,p_r)$ needed in a computation zone to completely check whether a pattern from $\{p_0,\dots,p_r\}$ occurs in its responsibility zone of level $m$ satisfies the bound,\[ S(p_0,\dots,p_r) \leq T(p_0,\dots,p_r) + (r+1)\max(|p_0|,\dots,|p_r|)m^22^{3m+1},   \]
where $T(p_0,\dots,p_r)$ is the number of steps needed by $\mathcal{M}$ to enumerate the patterns $p_0,p_1,\dots,p_r$.
	
Recall that the assumptions of Proposition~\ref{proposition:ReconstructionFunctionEstimate} are that $R^{\widetilde{X}}(n)\leq Cn$ for some constant $C_1>0$ and that that the time enumeration function satisfies $T(R^{\widetilde{X}}(n)) \leq P(n)|\widetilde{\mathcal{A}}|^{n}$ for some polynomial $P$. In our construction, we may rewrite the Aubrun-Sablik formula so that the number $S(R^{\widetilde{X}}(n))$ of steps needed to check that all forbidden patterns of length at most $R^{\widetilde{X}}(n)$ in a responsibility zone of level $m$ satisfies the bound \begin{align*}
S(R^{\widetilde{X}}(n)) & \leq T(R^{\widetilde{X}}(n)) + |\widetilde{\mathcal{A}}|^{R^{\widetilde{X}}(n)+1}R^{\widetilde{X}}(n)m^22^{3m+1}\\
&  \leq P(C_1n)|\widetilde{\mathcal{A}}|^{n} + |\widetilde{\mathcal{A}}|^{C_1n + 1}C_1 n m^22^{3m+1}
\end{align*}
Simplifying the above bound, it follows that there exist constants $C_2,C_3>0$ such that
\[ S(R^{\widetilde{X}}(n)) \leq C_2 m^22^{3m+C_3n}.    \]
	
As $n$ is fixed, it follows that there is a smallest $\widebar{m}=\widebar{m}(n) \in \NN$ such that $2^{\widebar{m}} \geq C_4n$ (in such a way that the tape on the computation zone of level $\widebar{m}$ can hold words of size $R^{\widetilde{X}}(n)$) and such that
\[C_2 \widebar{m}^22^{3\widebar{m}+C_3n} \leq 2^{2^{\widebar{m}}}+2,\]
so that the number $2^{2^{\widebar{m}}}+2$ of computation steps in the zone of level $\widebar{m}$ is enough to check all the words of size $R^{\widetilde{X}}(n)$. It follows that a bound for the recursive part of $R^{\widehat{X}}$ is given by
\[ R_{\mathrm{recursive}}^{\widehat{X}}(n) = \mathcal{O}(2^{\widebar{m}+2^{\widebar{m}(n)}}).   \]
	
In order to turn this into an explicit asymptotic expression we need to find a suitable bound for $\widebar{m}(n)$. Notice that if $m \geq 6$ we simultaneously have that $m^2 \leq 2^m$ and $4m \leq 2^{m-1}$. We may then write for $m \geq 6$,
\[ C_2 m^22^{3m+C_3n} \leq C_2 2^{4m+C_3n} \leq C_2 2^{C_3n}2^{2^{m-1}}+2.   \]
Therefore, it suffices to find $\widebar{m} =\widebar{m}(n)$ such that
\[ C_22^{C_3n} \leq 2^{2^{\widebar{m}-1}}.   \]
From here, it follows that there is a constant $C_5 >0$ such that any value of $\widebar{m}$ satisfying
\[ \widebar{m} \geq C_5 + \log_2(n),    \]
satisfies the above bound. We get that
\[ R_{\mathrm{recursive}}^{\widehat{X}}(n) = \mathcal{O}(n 2^{C_5n}).  \]
	
Finally, putting together the structural and recursive asymptotic bounds, we obtain that there is a constant $K >0$ such that
\[  R^{\widehat{X}}(n) = \mathcal{O}(\max \{ \sqrt{n}C_0^{\sqrt{n}} , n 2^{C_5n}\}) = \mathcal{O}(nK^n).    \]
Hence we get that \[ \limsup_{n\to+\infty}\frac{1}{n}\log( R^{\widehat{X}}(n) )  < +\infty. \]
This is what we wanted to prove.\end{proof}

{\it Acknowledgments:} We are very grateful to Nathalie Aubrun and Mathieu Sablik for always being available to discuss and explain details of their construction to us and also to Samuel Petite for his help and suggestions. RB, GDV, and PT were partially supported by USP-COFECUB Uc Ma 176/19,
"{\it Formalisme Thermodynamique des quasi-cristaux \`a temp\'erature z\'ero}". GDV was financed in part by the Coordenação de Aperfeiçoamento de Pessoal de Nível Superior - Brasil (CAPES) - Finance Code 001 (Project 88887.1351235/2017-01). S. Barbieri, who collaborated in the computer-theoretic parts of the paper, was supported by the FONDECYT grant 11200037 and the ANR projects CoCoGro (ANR-16-CE40-0005) and CODYS (ANR-18-CE40-0007). RB was supported by CNPq grants 312294/2018-2 and 408851/2018-0, by FAPESP grant 16/25053-8, and by the Grant for experienced researchers from abroad of the Centre of Excellence “Dynamics, Mathematical Analysis and Artificial Intelligence” at Nicolaus Copernicus University in Toruń, Poland. We thank both anonymous referees who provided useful and detailed comments on a previous version of this manuscript.

\end{document}